\numberwithin{equation}{section}
\newtheorem{theorem}{Theorem}
\newtheorem{proposition}[theorem]{Proposition}
\newtheorem{lemma}[theorem]{Lemma}
\newtheorem{corollary}[theorem]{Corollary}
\theoremstyle{remark}
\newtheorem{remark}[theorem]{Remark}
\theoremstyle{definition}
\theoremstyle{example}
\newtheorem{example}[theorem]{Example}
\newcommand{\st}{\,:\,}
\newcommand{\Real}{\mathbb{R}}
\DeclareRobustCommand{\bvec}[1]{\boldsymbol{#1}}
  \renewcommand{\bvec}[1]{#1}%
\newcommand{\uvec}[1]{\underline{\bvec{#1}}}
\newcommand{\cvec}[1]{\bvec{\mathcal{#1}}}
\newcommand{\rd}{\mathrm{d}}
\newcommand{\ud}{\ul\rd}
\newcommand{\sign}{\text{sign}}
\newcommand{\rotation}[1]{\varrho_{#1}}
\DeclareMathOperator{\GRAD}{\bf grad}
\DeclareMathOperator{\CURL}{\bf curl}
\DeclareMathOperator{\DIV}{div}
\DeclareMathOperator{\ROT}{rot}
\DeclareMathOperator{\VROT}{\bf rot}
\newcommand{\compl}{{\rm c}}
\newcommand{\Hcurl}[1]{\bvec{H}(\CURL;#1)}
\newcommand{\Hrot}[1]{\bvec{H}(\ROT;#1)}
\newcommand{\Hdiv}[1]{\bvec{H}(\DIV;#1)}
\newcommand{\ul}{\underline}
\newcommand{\Xcurl}[1]{\underline{\bvec{X}}_{\CURL,#1}^r}
\newcommand{\lproj}[3]{\pi_{#1,#2}^{#3}}
\newcommand{\trimproj}[3]{\pi_{#1,#2}^{-,#3}}
\newcommand{\uCh}[1][]{\uvec{C}_h^k}
\newcommand{\Dh}[1][]{D_h^k}
\newcommand{\CF}{C_F^r}
\newcommand{\CT}{\boldsymbol{C}_T^r}
\newcommand{\trFt}{\bvec{\gamma}_{{\rm t},F}^r}
\newcommand{\Pot}[3]{P_{#1,#2}^{#3}}
\newcommand{\Pcurl}[1][T]{\bvec{P}_{\CURL,#1}^r}
\newcommand{\faces}[1]{\mathcal{F}_{#1}}
\newcommand{\edges}[1]{\mathcal{E}_{#1}}
\newcommand{\FT}{\faces{T}}
\newcommand{\ET}{\edges{T}}
\newcommand{\EF}{\edges{F}}
\newcommand{\normal}{\bvec{n}}
\newcommand{\Poly}[2]{\mathcal{P}_{#1}^{#2}}
\newcommand{\vPoly}[2]{\cvec{P}_{#1}^{#2}}
\newcommand{\Roly}[1]{\cvec{R}_{#1}}
\newcommand{\Goly}[1]{\cvec{G}_{#1}}
\newcommand{\cRoly}[1]{\cvec{R}^\compl_{#1}}
\newcommand{\cGoly}[1]{\cvec{G}^\compl_{#1}}
\newcommand{\RaviartThomas}[1]{\boldsymbol{\mathcal{RT}}_{#1}}
\newcommand{\Nedelec}[1]{\boldsymbol{\mathcal{N}}_{#1}}
\newcommand{\Koly}[2]{\mathcal{K}_{#1}^{#2}}
\newcommand{\kproj}[3]{\pi_{#1,#2}^{\mathcal{K},#3}}
\DeclareMathOperator{\Ker}{Ker}
\DeclareMathOperator{\Image}{Im}
\newcommand{\Mh}{\mathcal{M}_h}
\newcommand{\Th}{\mathcal{T}_h}
\newcommand{\Fh}{\mathcal{F}_h}
\newcommand{\Eh}{\mathcal{E}_h}
\newcommand{\Vh}{\mathcal{V}_h}
\DeclareMathOperator{\tr}{tr}
\newcommand{\vol}{\mathrm{vol}}
\newcommand{\DDR}[1]{\ensuremath{\mathrm{DDR}(#1)}}
\newcommand{\VEM}[1]{\ensuremath{\mathrm{VEM}(#1)}}
\newcommand{\Alt}[1]{{\rm Alt}^{#1}}
\newcommand{\Ext}[2]{\ul E_{#1}^{#2}}
\newcommand{\fExt}[2]{E_{#1}^{#2}}
\newcommand{\Red}[2]{\ul R_{#1}^{#2}}
\newcommand{\norm}[2]{\|#2\|_{#1}}
\newcommand{\seminorm}[2]{|#2|_{#1}}
\newcommand{\vvvert}{\vert\kern-0.25ex\vert\kern-0.25ex\vert}
\newcommand{\tnorm}[2]{\vvvert #2\vvvert_{#1}}
\newcommand{\hud}[1]{\hat{\underline{#1}}}
\begin{document}

\title{An exterior calculus framework for polytopal methods}
\titlemark{EXTERIOR CALCULUS FOR POLYTOPAL METHODS}
\emsauthor{1}{
  \givenname{Francesco}
  \surname{Bonaldi}
  \orcid{0000-0003-0512-0362}
}{F.~Bonaldi}
\emsauthor{2}{
  \givenname{Daniele Antonio}
  \surname{Di Pietro}
  \orcid{0000-0003-0959-8830}
}{D.~A.~Di Pietro}
\emsauthor{3}{
  \givenname{J\'{e}r\^{o}me}
  \surname{Droniou}
  \orcid{0000-0002-3339-3053}
}{J.~Droniou}
\emsauthor{4}{
  \givenname{Kaibo}
  \surname{Hu}
  \orcid{0000-0001-9574-9644}
}{K.~Hu}

\Emsaffil{1}{
  \department{LAMPS}
  \organisation{Université de Perpignan Via Domitia}
  \city{Perpignan}
  \country{France}
  \affemail{francesco.bonaldi@univ-perp.fr}
}

\Emsaffil{2}{
  \department{IMAG}
  \organisation{Univ. Montpellier, CNRS}
  \city{Montpellier}
  \country{France}
  \affemail{daniele.di-pietro@umontpellier.fr}
}

\Emsaffil{3}{
  \department{1}{IMAG}
  \organisation{1}{Univ. Montpellier, CNRS}
  \city{1}{Montpellier}
  \country{1}{France}
  \affemail{jerome.droniou@umontpellier.fr}

  \department{2}{School of Mathematics}
  \organisation{2}{Monash University}
  \city{2}{Melbourne}
  \country{2}{Australia}
}

\Emsaffil{4}{
  \department{School of Mathematics}
  \organisation{University of Edinburgh}
  \city{Edinburgh}
  \country{UK}
  \affemail{kaibo.hu@ed.ac.uk}
}

\classification{65N30, 65N99, 14F40}

\keywords{Discrete de Rham Method, Virtual Element Method, differential forms, exterior calculus, polytopal methods}

\begin{abstract}
  We develop in this work the first polytopal complexes of differential forms.
  These complexes, inspired by the Discrete De Rham and the Virtual Element approaches, are discrete versions of the de Rham complex of differential forms built on meshes made of general polytopal elements.
  Both constructions benefit from the high-level approach of polytopal methods, which leads, on certain meshes, to leaner constructions than the finite element method.
  We establish commutation properties between the interpolators and the discrete and continuous exterior derivatives, prove key polynomial consistency results for the complexes, and show that their cohomologies are isomorphic to the cohomology of the continuous de Rham complex.
\end{abstract}

\maketitle




\section{Introduction}

This work is a first step towards merging two extremely successful avenues of research in numerical analysis: finite element differential forms and arbitrary-order polytopal methods.

The well-posedness of important classes of partial differential equations (PDEs), and the development of stable approximations thereof, hinges on the properties of underlying Hilbert complexes \cite{Bruning.Lesch:92}.
The best-known example is provided by the de Rham complex which, for an open connected polyhedral domain $\Omega\subset\Real^3$, reads
\begin{equation}\label{eq:continuous.de.rham}
  \begin{tikzcd}
    \{0\}\arrow{r}{} & H^1(\Omega)\arrow{r}{\GRAD} & \Hcurl{\Omega}\arrow{r}{\CURL} & \Hdiv{\Omega}\arrow{r}{\DIV} & L^2(\Omega)\arrow{r}{} & \{0\},
  \end{tikzcd}
\end{equation}
where
$H^1(\Omega)$ is the space of scalar-valued functions over $\Omega$ that are square-integrable along with their gradient, while
$\Hcurl{\Omega}$ and $\Hdiv{\Omega}$ are the spaces of vector-valued functions over $\Omega$ that are square-integrable along with their curl and divergence, respectively.
Using the framework of differential forms (see Appendix \ref{sec:appendix}), the de Rham complex \eqref{eq:continuous.de.rham} can be generalised to a domain $\Omega$ of any dimension $n$ as:
\begin{equation}\label{eq:diff.de.rham}
  \begin{tikzcd}
    \{0\}\arrow{r}{} & H\Lambda^0(\Omega) \arrow{r}{\rd^0} & {\cdots} \arrow{r}{\rd^{k-1}} & H\Lambda^k(\Omega) \arrow{r}{\rd^k} & {\cdots} \arrow{r}{\rd^{n-1}} & H\Lambda^n(\Omega) \arrow{r}{} & \{0\}.
  \end{tikzcd}
\end{equation}
In what follows, we shall possibly omit the index $k$ from exterior derivatives and spaces in \eqref{eq:diff.de.rham} when no ambiguity can arise.

The de Rham complex enters the well-posedness analysis of PDEs through its cohomology spaces $\Ker\rd^k/\Image\rd^{k-1}$.
A classical result links these spaces to the topological features of the domain and their dimensions to its Betti numbers.
Preserving such homological structures at the discrete level leads to \emph{compatible} methods and is key to the design of stable numerical schemes.

The compatible finite element approximation of the vector-valued spaces appearing in the de Rham complex~\eqref{eq:continuous.de.rham} arose as a research subject in the late 70s \cite{Raviart.Thomas:77,Nedelec:80}.
In the late 80s, links with Whitney forms were identified~\cite{Bossavit:88}.
More recently, the development of Finite Element Exterior Calculus (FEEC) \cite{Arnold.Falk.ea:06,Arnold.Falk.ea:10,Arnold:18} has provided a unified perspective on the generation and analysis of finite element approximations of the de Rham complex \eqref{eq:diff.de.rham}.
Finite Element Systems (FES) are a generalisation of FEEC covering spaces which are not necessarily piecewise polynomial inside mesh elements (but can be, for example, piecewise polynomial on subdivisions of these elements); see \cite{Christiansen.Munthe-Kaas.ea:11,Christiansen.Gillette:16,Christiansen.Hu:18}. FEEC and FES led to the unification of several families of finite elements and heavily hinge on the notion of subcomplex, which makes them naturally geared towards conforming approximations.

While conforming methods are still widely used, their construction relies on polynomial basis functions that can be globally and conformally glued, and can therefore only be carried out on conforming meshes, composed of elements of simple shape (e.g., tetrahedra or hexahedra); extensions to more general meshes, such as the barycentric dual of a simplicial mesh, have been considered, e.g., in \cite{Christiansen:08}.
In recent years, significant efforts have been made to develop and analyse numerical methods that support more general meshes including, e.g., general polytopal elements and non-matching interfaces; a representative but by no means exhaustive list of contributions includes \cite{Brezzi.Buffa.ea:09,Di-Pietro.Ern:10,Droniou.Eymard.ea:10,Eymard.Gallouet.ea:10,Bassi.Botti.ea:12,Di-Pietro.Ern:12,Botti:12,Antonietti.Giani.ea:13,Beirao-da-Veiga.Brezzi.ea:13,Bonelle.Ern:14,Brezzi.Falk.ea:14,Beirao-da-Veiga.Lipnikov.ea:14,Di-Pietro.Ern.ea:14,Cangiani.Dong.ea:17,Di-Pietro.Ern:17,Droniou.Eymard.ea:18,Di-Pietro.Droniou:20,Di-Pietro.Droniou.ea:20,Di-Pietro.Droniou:23*1}.
Polytopal technologies typically introduce some degree of non-conformity, either because they are formulated in a fully discrete setting (like Hybrid High-Order \cite{Di-Pietro.Ern.ea:14,Di-Pietro.Droniou:20} or Discrete de Rham -- DDR methods \cite{Di-Pietro.Droniou.ea:20,Di-Pietro.Droniou:23*1}) or through the use of projections (as in Virtual Element Methods -- VEM \cite{Beirao-da-Veiga.Brezzi.ea:13}).

Despite their non-conformity, polytopal technologies can be used to develop compatible frameworks.
Polytopal discretisations of the de Rham complex \eqref{eq:continuous.de.rham} have been proposed, e.g., in \cite{Beirao-da-Veiga.Brezzi.ea:16,Di-Pietro.Droniou.ea:20,Di-Pietro.Droniou:23*1},
and applied to a variety of models, such as magnetostatics \cite{Beirao-da-Veiga.Brezzi.ea:18,Di-Pietro.Droniou:21}, the Stokes equations \cite{Beirao-da-Veiga.Dassi.ea:22}, and the Yang--Mills equations \cite{Droniou.Oliynyk.ea:23}; they have also inspired further developments, based on the same principles, for other complexes of interest such as variants of the de Rham complex with increased regularity \cite{Zhao.Zhang:21,Di-Pietro:23}, elasticity complexes \cite{Di-Pietro.Droniou:23*3,Botti.Di-Pietro.ea:23}, and the Stokes complex \cite{Beirao-da-Veiga.Mora.ea:19,Hanot:23,Beirao-da-Veiga.Dassi.ea:20}.
Polytopal complexes have additionally been used to construct methods that are robust with respect to the variations of physical parameters, in particular for the Stokes \cite{Beirao-da-Veiga.Dassi.ea:22}, Reissner--Mindlin \cite{Di-Pietro.Droniou:22} and Brinkman \cite{Di-Pietro.Droniou:23*1} problems.
Many of these models have also been tackled using finite element complexes and related methods (see, e.g., \cite{Arnold:18,Christiansen.Hu:18,Christiansen.Gopalakrishnan.ea:20,Arnold.Hu:21}).
However, due to their higher-level design, which does not require the existence and computability of globally conforming piecewise polynomial basis functions, polytopal methods offer distinctive advantages over finite elements.
These include, in addition to the support of general meshes, the possibility to reduce the dimension of discrete spaces, sometimes below their finite element counterparts \cite[Table 3]{Di-Pietro.Droniou:23*2}, through systematic processes such as enhancement or serendipity \cite{Beirao-da-Veiga.Brezzi.ea:18*1,Di-Pietro.Droniou:23*2}.
Such added flexibility comes at a minor (especially when using homogeneous numerical integration \cite{Chin.Lasserre.ea:15}) additional cost with respect to standard finite elements, namely the need to solve local problems to reconstruct discrete counterparts of the exterior derivative and of the corresponding potentials.
As for finite elements, the size of the algebraic systems corresponding to the polytopal discretisation of a given problem can be reduced through static condensation and hybridisation.

The purpose of the present work is to take one step further and show how exterior calculus can be used to generalise the construction and analysis of polytopal complexes.
More specifically, we present two discrete de Rham complexes in arbitrary dimension and with arbitrary approximation degree that generalise those introduced in \cite{Di-Pietro.Droniou:23*1} (DDR) and \cite{Beirao-da-Veiga.Brezzi.ea:18} (VEM).
Three key features set these constructions apart from Finite Element complexes:
\begin{itemize}
\item No explicit spaces of globally conforming differential forms (i.e., subspaces of $H\Lambda(\Omega)$) are needed.
  Instead, we work with \emph{fully discrete spaces} made of vectors of polynomial components on the mesh cells (of various dimensions). The meaning of these components is provided by the interpolators on the fully discrete spaces.
\item Due to the absence of explicit underlying conforming spaces, the differential operator of the complex cannot be the exterior derivative. Instead, a \emph{discrete exterior derivative} is constructed combining the polynomial components to mimic the Stokes formula.
\item \emph{Discrete potentials} are also designed, again mimicking the Stokes formula.
  They are piecewise (discontinuous) polynomial forms on the mesh used, in particular, to define an $L^2$-structure on the discrete spaces (an essential tool to discretise PDEs written in weak form).
\end{itemize}
The choice of the polynomial components in the spaces and the design of discrete exterior derivatives and potentials revolve around two key properties: \emph{polynomial consistency}, which is related to the ability to reproduce exactly polynomial differential forms up to a selected polynomial degree, and \emph{compatibility}, linked to the existence of an isomorphism between the cohomology of the discrete and continuous de Rham complexes.
Notice that, in the finite element framework, polynomial consistency simply corresponds to the fact that suitable polynomial spaces are contained in the (local) finite element space.
While both the DDR- and VEM-inspired constructions heavily rely on discrete versions of the Stokes formula, they do so in a radically different spirit: in the DDR construction, the choice of components in the discrete spaces is inspired by the formula to reconstruct a discrete exterior derivative, which is then used to construct discrete potentials.
In the VEM construction, on the other hand, the space components (and, in particular, those associated with differentials) are chosen based on the formula used to define a discrete potential.
While the choice in the DDR construction leads to leaner spaces, the study of its properties is more elaborated.
Notice that, at this early stage, we haven't tried to identify the virtual (conforming) spaces that underlie the VEM-inspired construction, and we have made no effort whatsoever in trying to reduce the dimension of the discrete spaces through serendipity.

The rest of this work is organised as follows.
In Section \ref{sec:setting} we establish the setting.
In Section \ref{sec:ddr} we present and analyse the discrete complex generalising the DDR construction of \cite{Di-Pietro.Droniou:23*1}.
Section \ref{sec:vem} contains the definition and analysis of the complex generalising the VEM construction of \cite{Beirao-da-Veiga.Brezzi.ea:18}.
In Section \ref{sec:links}, we discuss in greater detail similarities and differences with respect to the FEEC, FES, and Distributional Differential Forms frameworks.

Differential forms of any degree in dimensions 2 and 3 have interpretations in terms of vector fields. To make the exposition self-contained and improve the legibility for the reader not accustomed to differential forms, we recall some facts on these so-called vector proxies in Appendix \ref{sec:appendix}, and we include throughout the exposition a series of examples to illustrate the development in the differential forms framework through vector calculus operators.


\section{Setting}\label{sec:setting}

We present here the main notions used in the construction of the polytopal complexes of differential forms. For the reader not used to the framework of differential forms, we recall in Appendix~\ref{sec:appendix} some basic concepts and definitions.

\subsection{Spaces of differential forms}\label{sec:setting:differential.forms}

Let $M$ denote an $n$-dimensional manifold.
In what follows, $M$ will typically be a cell of a polytopal mesh (see Section \ref{sec:polytopal.mesh} below), and thus a relatively open set in a subspace of $\Real^m$ for some $m\ge n$.
For any natural number $\ell$ such that $0\le \ell\le n$, we will denote by $\Lambda^\ell(M)$ the space of differential $\ell$-forms (often just called $\ell$-forms) on $M$ without explicit regularity requirements.
When relevant, regularity is made explicit by prepending the appropriate space (e.g.,~$L^2\Lambda^\ell(M)$ stands for square-integrable $\ell$-forms).

\subsection{Integration by parts}

We recall the following integration by parts (Stokes) formula:
\begin{multline}\label{eq:ipp}
  \int_M \rd\omega\wedge\mu
  = (-1)^{\ell+1}\int_M \omega \wedge \rd\mu
  + \int_{\partial M} \tr_{\partial M} \omega \wedge \tr_{\partial M}\mu
  \\
  \forall(\omega,\mu) \in C^1\Lambda^\ell(\overline{M})\times C^1\Lambda^{n-\ell-1}(\overline{M}),
\end{multline}
where, for any form degree $m$, $\tr_{\partial M}:C^0\Lambda^{m}(\overline{M}) \to C^0\Lambda^{m}(\partial M)$ is the trace operator, i.e., the pullback of the inclusion $\partial M\subset M$, and $\partial M$ is oriented with respect to $M$.
Formula \eqref{eq:ipp} will provide the starting point to define discrete counterparts of the exterior derivative and of the corresponding potentials on mesh cells.
It will also drive the choice of the components in the discrete spaces, geared at ensuring that the reconstructions preserve certain polynomial differential forms.

\subsection{Hodge star}

Assume now that $M$ is an open set in a subspace of $\Real^m$.
We denote by $\star:\Lambda^\ell(M) \to \Lambda^{n-\ell}(M)$ the Hodge star operator, and we set
\begin{equation}\label{eq:inv.star}
  \star^{-1} \coloneq (-1)^{\ell(n-\ell)}\star,
\end{equation}
a notation justified observing that, for any $\omega\in\Alt{\ell}(V)$ (with $\Alt{\ell}(V)$ denoting the set of alternating $\ell$-linear forms on $\Real^n$), $\star^{-1}\star\omega = \omega$ (see \eqref{eq:star.star} in the appendix).
Notice that, while the Hodge star operator depends on $M$, we won't need to make this dependence explicit as it will be clear from the context.

\subsection{$L^2$-orthogonal projectors}

Integrating the inner product of $\Alt{\ell}(V)$ over $M$ yields the inner product of $L^2\Lambda^\ell(M)$. For any closed subspace $\mathcal X$ of $L^2\Lambda^\ell(M)$, we therefore have an $L^2$-orthogonal projector $\pi_{\mathcal X}:L^2\Lambda^\ell(M)\to \mathcal X$ on $\mathcal X$, defined by the following relation:
For all $\omega\in L^2\Lambda^\ell(M)$, $\pi_{\mathcal X}\omega\in\mathcal X$ satisfies
\begin{equation}\label{eq:def.piX}
  \int_M\pi_{\mathcal X}\omega\wedge\star\mu
  = \int_M\omega\wedge{\star\mu}
  \qquad\forall \mu \in \mathcal X.
\end{equation}
To improve legibility, in the next sections we also introduce specific notations for $L^2$-orthogonal projectors on polynomial subspaces $\mathcal X$ that are particularly relevant to our construction.

\subsection{Polytopal mesh}\label{sec:polytopal.mesh}

From this point on, $\Omega$ will denote a polytopal domain of $\Real^n$.
We let $\Mh$ denote a \emph{polytopal mesh} of $\Omega$, i.e., a collection of disjoint polytopal sets (mesh entities) of dimensions in $[0,n]$, relatively open in their spanned affine space, such that the boundary of each $d$-cell (polytopal set of dimension $d$) is the union of mesh entities of dimension $<d$, and such that any $d$-cell for $d<n$ is contained in the boundary of some $(d+1)$-cell.
For any $d\in[0,n]$, the set collecting all $d$-cells of $\Mh$ is denoted by $\Delta_d(\Mh)$.
Notice that this notion of polytopal mesh essentially coincides with that of CW-complex in algebraic topology.
Thus, when $\Omega$ is a domain in dimension $n=3$, $\Mh$ gathers
the vertices collected in the set $\Vh \coloneq \Delta_0(\Mh)$,
the edges collected in the set $\Eh \coloneq \Delta_1(\Mh)$,
the faces collected in the set $\Fh \coloneq \Delta_2(\Mh)$,
and the elements collected in the set $\Th \coloneq \Delta_3(\Mh)$.
For all $f\in\Mh$, we select a point $\bvec{x}_f\in f$ which, when $\Mh$ belongs to a refined mesh sequence, is assumed at a distance from the boundary of $f$ comparable to the meshsize.

If $f\in\Delta_d(\Mh)$ and $d'\le d$ is an integer, we denote by $\Delta_{d'}(f)$ the set of subcells of $f$ of dimension $d'$.
Hence, if $n=d=3$, so that $f=T\in\Th$ is a polyhedral element of the mesh, $f\in\Delta_{d'}(T)$ is a vertex of $T$ if $d'=0$, an edge of $T$ if $d'=1$, a polygonal face of $T$ if $d'=2$, or $T$ itself if $d'=3$.

For future use, we note the following property.

\begin{lemma}[Projectors on subspaces of differential forms]
  Let $(k,d)$ be integers such that $k\le d\le n$, $f\in\Delta_d(\Mh)$, and $\mathcal X$ be a closed subspace of $L^2\Lambda^{d-k}(f)$.
  Then, it holds:
  For all $\omega\in L^2\Lambda^k(f)$ and all $\mu\in\mathcal X$,
  \begin{equation}\label{eq:remove.projector}
    \int_f \star^{-1}\pi_{\mathcal X}(\star\omega)\wedge \mu
    =\int_f \mu\wedge \star\pi_{\mathcal X}(\star\omega)=
    \int_f \omega\wedge \mu.
  \end{equation}
\end{lemma}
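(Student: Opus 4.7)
The plan is to prove the two equalities separately: the first is a purely algebraic consequence of the graded-commutativity of the wedge product combined with the definition of $\star^{-1}$ in \eqref{eq:inv.star}, while the second uses the defining property \eqref{eq:def.piX} of the orthogonal projector together with the symmetry of the $L^2$ pairing and the involutivity of $\star$.

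For the first equality, I would observe that $\pi_{\mathcal{X}}(\star\omega)$ is a $(d-k)$-form on the $d$-dimensional cell $f$, so $\star^{-1}\pi_{\mathcal{X}}(\star\omega)$ is a $k$-form. Graded-commutativity gives
\begin{equation*}
  \star^{-1}\pi_{\mathcal{X}}(\star\omega)\wedge\mu
  = (-1)^{k(d-k)}\,\mu\wedge\star^{-1}\pi_{\mathcal{X}}(\star\omega),
\end{equation*}
and then the definition \eqref{eq:inv.star}, applied on $f$ (where the ambient dimension is $d$) to the $(d-k)$-form $\pi_{\mathcal{X}}(\star\omega)$, yields $\star^{-1}\pi_{\mathcal{X}}(\star\omega)=(-1)^{(d-k)k}\star\pi_{\mathcal{X}}(\star\omega)$. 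The two signs cancel, producing $\int_f \mu\wedge\star\pi_{\mathcal{X}}(\star\omega)$, as required.

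For the second equality, the key observation is that the $L^2$ inner product of two $(d-k)$-forms $\alpha,\beta\in L^2\Lambda^{d-k}(f)$, namely $\int_f \alpha\wedge\star\beta$, is symmetric in $(\alpha,\beta)$: indeed, using graded-commutativity once and the identity $\star\star=(-1)^{k(d-k)}\operatorname{Id}$ on $k$-forms twice, one obtains $\int_f \alpha\wedge\star\beta = \int_f \beta\wedge\star\alpha$. Applying this symmetry to $\alpha = \mu$ and $\beta = \pi_{\mathcal{X}}(\star\omega)$, both in $L^2\Lambda^{d-k}(f)$, gives
\begin{equation*}
  \int_f \mu\wedge\star\pi_{\mathcal{X}}(\star\omega)
  = \int_f \pi_{\mathcal{X}}(\star\omega)\wedge\star\mu.
\end{equation*}
Since $\mu\in\mathcal{X}$, the definition \eqref{eq:def.piX} of the orthogonal projector (with $\star\omega$ in the role of the form being projected) allows us to remove $\pi_{\mathcal{X}}$ from the right-hand side:
\begin{equation*}
  \int_f \pi_{\mathcal{X}}(\star\omega)\wedge\star\mu = \int_f \star\omega\wedge\star\mu.
\end{equation*}
A further application of the same symmetry gives $\int_f \star\omega\wedge\star\mu = \int_f \mu\wedge\star\star\omega$, and using $\star\star\omega = (-1)^{k(d-k)}\omega$ together with one more application of graded-commutativity produces $\int_f \omega\wedge\mu$.

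No step is genuinely hard; the only thing one has to be careful about is bookkeeping the signs arising from graded-commutativity and from the definition of $\star^{-1}$ on a $d$-dimensional cell (as opposed to on the ambient $n$-dimensional domain). The main conceptual content is simply the observation that testing the identity \eqref{eq:def.piX} against $\mu\in\mathcal{X}$ — which is allowed precisely because $\mu$ lies in the subspace on which the projector is defined — lets one discard $\pi_{\mathcal{X}}$ when it is paired against an element of $\mathcal{X}$ via the $L^2$ inner product.
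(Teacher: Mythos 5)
Your proposal is correct and follows essentially the same route as the paper: the first equality is the identity $\star^{-1}\alpha\wedge\mu=\mu\wedge\star\alpha$ (the paper's \eqref{eq:commut.star.wedge}, which you re-derive from graded-commutativity and the sign in \eqref{eq:inv.star}), and the second is obtained by the same chain — symmetry of $\int_f\alpha\wedge\star\beta$, cancellation of $\pi_{\mathcal X}$ via \eqref{eq:def.piX} tested against $\mu\in\mathcal X$, then $\star\star$ and anticommutativity. The sign bookkeeping on the $d$-dimensional cell is handled correctly.
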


\begin{proof}
  The first relation in \eqref{eq:remove.projector} follows from \eqref{eq:commut.star.wedge}. To prove the second relation, we write
  \begin{align*}
    \int_f \mu\wedge \star\pi_{\mathcal X}(\star\omega)&=\int_f \cancel{\pi_{\mathcal X}}(\star\omega)\wedge\star\mu
    =\int_f \mu\wedge(\star\star\omega)=\int_f \omega\wedge\mu,
  \end{align*}
  where the first equality follows from \eqref{eq:commut.star.wedge} (with $(\omega,\mu)\gets(\pi_{\mathcal X}(\star\omega),\mu)$), the cancellation of the projector is justified by its definition \eqref{eq:def.piX}, the second equality is obtained using \eqref{eq:commut.star.wedge} again, and the conclusion follows from \eqref{eq:star.star} and the anticommutativity \eqref{eq:wedge:anticommutativity} of $\wedge$.
\end{proof}

\subsection{Local polynomial spaces of differential forms}

Let $f\in\Delta_d(\Mh)$, $0\le d\le n$.
For any integer $r\ge 0$, we denote by $\Poly{r}{} \Lambda^\ell(f)$ the space of polynomial $\ell$-forms of total degree $\le r$ on $f$.
We also adopt the standard convention $\Poly{-1}{}\Lambda^\ell(f)\coloneq\{0\}$.
We denote by $\lproj{r}{f}{\ell} : L^2\Lambda^\ell(f) \to \Poly{r}{}\Lambda^\ell(f)$ the $L^2$-orthogonal projector onto $\Poly{r}{}\Lambda^\ell(f)$, defined by \eqref{eq:def.piX} with $\mathcal X=\Poly{r}{}\Lambda^\ell(f)$.

The Koszul differential on $f$ (translated by $\bvec{x}_f$) is denoted by $\kappa$ so that, for all $\omega\in\Lambda^\ell(f)$, $\kappa\omega\in\Lambda^{\ell-1}(f)$ satisfies $(\kappa\omega)_{\bvec{x}}(\bvec{v}_1,\ldots,\bvec{v}_{\ell-1}) = \omega_{\bvec{x}}(\bvec{x}-\bvec{x}_f,\bvec{v}_1,\ldots,\bvec{v}_{\ell-1})$ for all vectors $\bvec{v}_1,\ldots,\bvec{v}_{\ell-1}$ tangent to $f$.
For any $f\in\Delta_d(\Mh)$, $1\le d\le n$, any integer $\ell\in[0,d]$, and any polynomial degree $r\ge 0$, we define the Koszul complement space as
\begin{equation}\label{def:koszul.space}
  \Koly{r}{\ell}(f)\coloneq\kappa\Poly{r-1}{}\Lambda^{\ell+1}(f).
\end{equation}
The indices $r$ and $\ell$ in this notation serve as a reminder that elements in $\Koly{r}{\ell}(f)$ are polynomial $\ell$-forms of polynomial degree $r$. Note also that, since $\Poly{-1}{}\Lambda^\ell(f)=\{0\}$ and  $\Lambda^{d+1}(f)=\{0\}$, we have
\begin{equation}\label{eq:Koly.0.ell=Koly.r.d=0}
  \text{
    $\Koly{0}{\ell}(f)=\Koly{r}{d}(f)=\{0\}$ for all $\ell$ and all $r$, respectively.
  }
\end{equation}
Moreover, since $\kappa\Lambda^0(f)=\{0\}$, we adopt the convention $\Koly{r}{-1}(f) \coloneq \{0\}$ for all $r$.
We denote by $\kproj{r}{f}{\ell}$ the $L^2$-orthogonal projector $L^2\Lambda^\ell(f)\to\Koly{r}{\ell}(f)$, defined by \eqref{eq:def.piX} with $\mathcal X=\Koly{r}{\ell}(f)$.

For all integers $r\ge 0$ and $\ell\in [0,d]$, the following direct decomposition holds (see \cite[Eq.~(3.11)]{Arnold.Falk.ea:06} for $\ell\ge 1$, the case $\ell=0$ can be directly checked):
\begin{subequations}\label{eq:decomposition.Pr}
  \begin{align}\label{eq:decomposition.Pr:ell=0}
    \Poly{r}{}\Lambda^0(f)
    &= \Poly{0}{}\Lambda^0(f) \oplus \Koly{r}{0}(f),
    \\\label{eq:decomposition.Pr:ell>=1}
    \Poly{r}{}\Lambda^\ell(f)
    &= \rd\Poly{r+1}{}\Lambda^{\ell-1}(f) \oplus \Koly{r}{\ell}(f)\quad\mbox{ if $\ell\ge 1$}.
  \end{align}
\end{subequations}
Since $\rd\circ\rd=0$ and $\rd\Poly{0}{}\Lambda^0(f)=\{0\}$ (since the coefficients of the form are constant), this shows that
\begin{equation}\label{eq:dP.dkappaP}
  \rd\Poly{r}{}\Lambda^{\ell}(f)=\rd\Koly{r}{\ell}(f).
\end{equation}
Applying this relation to $(r+1,\ell-1)$ instead of $(r,\ell)$ and recalling that $\rd$ is one-to-one on $\Koly{r+1}{\ell-1}(f)$ (see \cite[Theorem 3.2]{Arnold.Falk.ea:06}), this shows that, for $\ell\ge 1$, the following mapping is an isomorphism:
\begin{equation}\label{eq:isomorphism.Pr.koszul}
  \begin{aligned}
    \Koly{r+1}{\ell-1}(f) \times \Koly{r}{\ell}(f)\xrightarrow{\cong}{}& \Poly{r}{}\Lambda^{\ell}(f),\\
    (\mu,\nu) \mapsto{}& \rd\mu+\nu.
  \end{aligned}
\end{equation}
\begin{example}[Interpretation in terms of vector proxies]
  \label{example:interpretation.spaces}
  In the case $n=3$, thanks to the links between differential forms and vector proxies (see Appendix~\ref{sec:appendix}), we can associate to each space of polynomial differential forms a space of (vector- or scalar-valued) polynomial fields.
  Let us consider decomposition~\eqref{eq:decomposition.Pr:ell>=1}.
  We denote by $f_d$ a $d$-cell of $\Mh$,
  and we use a notation analogous to that of~\cite{Di-Pietro.Droniou:23*1} for polynomial spaces and vector calculus differential operators (with the exception that polynomial degrees are in subscripts instead of superscripts).
  Then, by definition~\eqref{def:koszul.space} of the Koszul space, when $f_3 = T \in \Th = \Delta_3(\Mh)$ is a mesh element, we have
  $$
  \begin{aligned}
    &\rd\Poly{r+1}{}\Lambda^0(f_3) \leftrightarrow \Goly{r}(T) \coloneq \GRAD \Poly{r+1}{}(T),
    &\quad& \Koly{r}{1}(f_3) \leftrightarrow \cGoly{r}(T) \coloneq (\bvec x - \bvec x_T) \times \vPoly{r-1}{}(T), \\
    &\rd\Poly{r+1}{}\Lambda^1(f_3) \leftrightarrow \Roly{r}(T) \coloneq \CURL \vPoly{r+1}{}(T),
    &\quad& \Koly{r}{2}(f_3) \leftrightarrow \cRoly{r}(T) \coloneq (\bvec x - \bvec x_T) \Poly{r-1}{}(T), \\
    &\rd\Poly{r+1}{}\Lambda^2(f_3) \leftrightarrow \DIV \Poly{r+1}{}(T) = \Poly{r}{}(T),
    &\quad& \Koly{r}{3}(f_3) = \{0\},
  \end{aligned}
  $$
  where the first identity in the last line results from the surjectivity of the divergence operator.

  On the other hand, when $f_2 = F \in \Fh = \Delta_2(\Mh)$ is a mesh face, we obtain the following pair of possible correspondences:
  \begin{equation}\label{eq:poly.space.proxy.n=2.a}
    \begin{aligned}
      \rd\Poly{r+1}{}\Lambda^0(f_2) &\leftrightarrow \Goly{r}(F) \coloneq \GRAD_F \Poly{r+1}{}(F),
      \\
      \Koly{r}{1}(f_2) &\leftrightarrow \cGoly{r}(F) \coloneq (\bvec x - \bvec x_F)^\perp \Poly{r-1}{}(F)
    \end{aligned}
  \end{equation}
  or
  \begin{equation}\label{eq:poly.space.proxy.n=2.b}
    \begin{aligned}
      \rd\Poly{r+1}{}\Lambda^0(f_2) &\leftrightarrow \Roly{r}(F) \coloneq \VROT_F \Poly{r+1}{}(F),
      \\
      \Koly{r}{1}(f_2) &\leftrightarrow \cRoly{r}(F) \coloneq (\bvec x - \bvec x_F)\Poly{r+1}{}(F),
    \end{aligned}
  \end{equation}
  where, for any $\bvec{v}\in\Real^2$, $\bvec v^\perp = \rotation{-\pi/2}\bvec v$ is the clockwise rotation of $\bvec v$ with respect to the orientation of $F$.
  The existence of two possible correspondences between polynomial $1$-forms and polynomial vector fields is to due
  to the fact that, when $d=2$, one can identify a $1$-form either with a vector field $\bvec v = (v_1,v_2)$ or with its rotation through a right angle (cf.~\cite[Chapter~6]{Arnold:18});
  in particular, we choose to identify it with the clockwise rotation $\bvec v^\perp = (v_2, -v_1)$ (see Appendix~\ref{sec:appendix} for further details).
  By \eqref{eq:Koly.0.ell=Koly.r.d=0}, we have $\Koly{r}{2}(f_2)=\{0\}$ and, according to whether we consider the vector proxy leading to~\eqref{eq:poly.space.proxy.n=2.a} or~\eqref{eq:poly.space.proxy.n=2.b},
  \[
  \rd\Poly{r+1}{}\Lambda^1(f_2) \leftrightarrow \ROT_F \vPoly{r+1}{}(F) = \Poly{r}{}(F)
  \quad\text{or}\quad  \rd\Poly{r+1}{}\Lambda^1(f_2) \leftrightarrow \DIV_F \vPoly{r+1}{}(F) = \Poly{r}{}(F).
  \]
  Hence, since both $1$-forms and $2$-forms in $\Real^3$ can be identified with vector fields, and accounting for the two-fold identification of $1$-forms in $\Real^2$,
  the decomposition~\eqref{eq:decomposition.Pr:ell>=1} reads, in terms of proxies,
  $$
  \vPoly{r}{}(f_d) = \Goly{r}(f_d) \oplus \cGoly{r}(f_d) = \Roly{r}(f_d) \oplus \cRoly{r}(f_d),\quad d\in\{2,3\},
  $$
  i.e., the same expressions as~\cite[Eqs.~(2.4) and~(2.6)]{Di-Pietro.Droniou:23*1}.
  On the other hand, concerning $0$-forms, the decomposition~\eqref{eq:decomposition.Pr:ell=0} reads, in terms of proxies,
  $$\Poly{r}{}(f_d) = \Poly{0}{}(f_d) \oplus \Poly{r}{\flat}(f_d),\quad d\in \{0,\ldots,3\},$$
  where we have introduced the notation $\Poly{r}{\flat}(f) \coloneq (\bvec{x}-\bvec{x}_f)\cdot\vPoly{r-1}{}(f)$ for any $f\in\Delta_d(\Mh)$.
\end{example}

\subsection{Trimmed local polynomial spaces}

We recall the following local trimmed polynomial spaces (see e.g.~\cite[Theorem~3.5]{Arnold.Falk.ea:06}):
For any $f\in~\Delta_d(\Mh)$, $1\le d\le n$,
\begin{subequations}\label{def:trimmed.spaces}
  \begin{align}\label{eq:trimmed.spaces:ell=0}
    \Poly{r}{-}\Lambda^0(f) & = \Poly{r}{}\Lambda^0(f),
    \\\label{eq:trimmed.spaces:ell>=1}
    \Poly{r}{-}\Lambda^{\ell}(f) & = \rd\Poly{r}{}\Lambda^{\ell-1}(f) \oplus \Koly{r}{\ell}(f)\qquad\text{for $\ell\ge 1$}.
  \end{align}
\end{subequations}
In \eqref{eq:trimmed.spaces:ell>=1}, comparing with the decompositions \eqref{eq:decomposition.Pr}, we have decreased by one the polynomial degree of the first space in the direct sum.
Note that this definition leads to the choice
\begin{equation}\label{eq:trimmed.spaces:0-cells}
  \Poly{r}{-}\Lambda^0(f)\coloneq\Poly{r}{}\Lambda^0(f)\cong\Real
  \qquad\forall f\in\Delta_0(\Mh).
\end{equation}
The $L^2$-orthogonal projector $L^2\Lambda^\ell(f)\to\Poly{r}{-}\Lambda^\ell(f)$ is denoted by $\trimproj{r}{f}{\ell}$, and is defined by \eqref{eq:def.piX} with $\mathcal X=\Poly{r}{-}\Lambda^{\ell}(f)$.

Let us note a few properties of trimmed polynomial spaces.
For $r=0$, only the space \eqref{eq:trimmed.spaces:ell=0} is non-trivial, that is, $\Poly{0}{-}\Lambda^\ell(f)=\{0\}$ if $\ell\in [1,d]$. Applying, if $r\ge 1$ and $\ell\ge 1$, \eqref{eq:decomposition.Pr:ell>=1} with $r-1$ instead of $r$ and noticing that $\Koly{r-1}{\ell}(f)\subset\Koly{r}{\ell}(f)$, we obtain the equality
\begin{equation}\label{eq:trimmed.between}
  \Poly{r}{-}\Lambda^{\ell}(f)=\Poly{r-1}{}\Lambda^\ell(f)+\Koly{r}{\ell}(f).
\end{equation}
This equality, which obviously also holds for $\ell=0$ (see \eqref{eq:decomposition.Pr:ell=0}), shows that trimmed polynomial spaces sit between full polynomial spaces:
\[
\Poly{r-1}{}\Lambda^{\ell}(f)\subset\Poly{r}{-}\Lambda^{\ell}(f)\subset\Poly{r}{}\Lambda^{\ell}(f).
\]

Recalling that $\Koly{r}{d}(f)=\{0\}$ and that $\rd\Poly{r}{}\Lambda^{d-1}(f) = \Poly{r-1}{}\Lambda^d(f)$ (by exactness of the tail of the polynomial de Rham sequence \cite[Corollary 7.3]{Arnold:18}), it holds
\begin{equation}\label{eq:trimmed.k=d}
  \Poly{r}{-}\Lambda^d(f)=\Poly{r-1}{}\Lambda^d(f).
\end{equation}

Applying \eqref{eq:dP.dkappaP} with $\ell-1$ instead of $\ell$, we moreover have
\begin{equation}\label{def:trimmed.spaces.bis}
  \Poly{r}{-}\Lambda^{\ell}(f) = \rd\Koly{r}{\ell-1}(f)+ \Koly{r}{\ell}(f) \qquad\text{for $\ell\ge 1$}.
\end{equation}
Since $\rd$ is one-to-one on $\Koly{r}{\ell-1}(f)$, this gives the following isomorphism, whenever $\ell\ge 1$:
\begin{equation}\label{eq:isomorphism.Prtrimmed.koszul}
  \begin{aligned}
    \Koly{r}{\ell-1}(f) \times \Koly{r}{\ell}(f)\xrightarrow{\cong}{}& \Poly{r}{-}\Lambda^{\ell}(f),\\
    (\mu,\nu) \mapsto{}& \rd\mu+\nu.
  \end{aligned}
\end{equation}
\begin{example}[Interpretation of~\eqref{eq:trimmed.spaces:ell>=1} in terms of vector proxies]
  Let $n=3$. For $d\in \{2,3\}$, denoting again by $f_d$ a $d$-cell,
  we define the (local) {Nédélec} and {Raviart--Thomas} spaces
  $$
  \Nedelec{r}(f_d) \coloneq \Goly{r-1}(f_d) + \cGoly{r}(f_d),
  \qquad
  \RaviartThomas{r}(f_d) \coloneq \Roly{r-1}(f_d) + \cRoly{r}(f_d).
  $$
  Notice that, when $d=3$, the Nédélec and Raviart--Thomas spaces can be obtained as polynomial spaces of vector proxies of \eqref{eq:trimmed.spaces:ell>=1} for $\ell = 1$ and $\ell = 2$, respectively.
  On the other hand, when considering $d=2$, both spaces can be obtained by taking
  the same value $\ell = 1$ in~\eqref{eq:trimmed.spaces:ell>=1}.
  Again, this is linked to the two-fold interpretation of $1$-forms in terms of vector proxies in $\Real^2$, discussed in Example~\ref{example:interpretation.spaces}, and corresponds to the well-known fact that two-dimensional N\'ed\'elec elements coincide with two-dimensional Raviart--Thomas elements rotated by a right angle.
\end{example}
The following result generalises \cite[Proposition~8]{Di-Pietro.Droniou:23*1}.
\begin{lemma}[Traces of trimmed polynomial spaces]\label{lem:traces.trimmed}
  The trace preserves trimmed spaces: For all integers $d\in[0,n]$, $d'\in [0,d]$ and $\ell\in [0,d']$, all $f\in \Delta_d(\Mh)$, and all $f'\in\Delta_{d'}(f)$, we have
  \[
  \tr_{f'}\Poly{r}{-}\Lambda^\ell(f)\subset\Poly{r}{-}\Lambda^\ell(f').
  \]
\end{lemma}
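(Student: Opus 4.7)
My plan is to verify the inclusion separately on the two summands appearing in the definition \eqref{def:trimmed.spaces} of $\Poly{r}{-}\Lambda^\ell(f)$. The case $\ell = 0$ is trivial: since $\Poly{r}{-}\Lambda^0(f) = \Poly{r}{}\Lambda^0(f)$, the claim reduces to the fact that the restriction of a scalar polynomial of degree $\le r$ on $f$ to the affine subset $f'$ remains a scalar polynomial of degree $\le r$. For $\ell \ge 1$, I will pick a generic $\omega = \rd\alpha + \kappa\beta$ with $\alpha \in \Poly{r}{}\Lambda^{\ell-1}(f)$ and $\beta \in \Poly{r-1}{}\Lambda^{\ell+1}(f)$, and treat each summand in turn.

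The exact summand will be easy: since pullbacks commute with the exterior derivative, $\tr_{f'}(\rd\alpha) = \rd(\tr_{f'}\alpha)$, and as $\tr_{f'}\alpha \in \Poly{r}{}\Lambda^{\ell-1}(f')$, this lies in $\rd\Poly{r}{}\Lambda^{\ell-1}(f') \subset \Poly{r}{-}\Lambda^\ell(f')$ by \eqref{eq:trimmed.spaces:ell>=1}.

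The main obstacle will be the Koszul summand, because $\kappa$ on $f$ is centred at $\bvec{x}_f$ while the analogous operator on $f'$ is centred at $\bvec{x}_{f'}$, and these two points generically differ. To bypass this, I will split the radial field as
\begin{equation*}
  \bvec{x} - \bvec{x}_f = (\bvec{x} - \bvec{x}_{f'}) + \bvec{c}, \qquad \bvec{c} \coloneq \bvec{x}_{f'} - \bvec{x}_f,
\end{equation*}
and use the multilinearity of $\beta$ together with the inclusion of the tangent space of $f'$ into that of $f$ to derive the pointwise identity
\begin{equation*}
  \tr_{f'}(\kappa\beta) = \kappa(\tr_{f'}\beta) + \tr_{f'}(\iota_{\bvec{c}}\beta),
\end{equation*}
where $\iota_{\bvec{c}}$ denotes contraction by the constant vector $\bvec{c}$. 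The first term on the right-hand side lies in $\kappa\Poly{r-1}{}\Lambda^{\ell+1}(f') = \Koly{r}{\ell}(f')$ by definition of the Koszul complement, while the second lies in $\Poly{r-1}{}\Lambda^\ell(f')$ because contracting a polynomial form by a constant vector preserves the polynomial degree and the trace of a polynomial form is polynomial of the same degree. Invoking the characterisation \eqref{eq:trimmed.between}, the sum then sits in $\Koly{r}{\ell}(f') + \Poly{r-1}{}\Lambda^\ell(f') = \Poly{r}{-}\Lambda^\ell(f')$, which will conclude the argument.
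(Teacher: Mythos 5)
Your proposal is correct and follows essentially the same route as the paper's proof: the $\ell=0$ case is dispatched by noting trimmed spaces coincide with full spaces, the exact summand is handled via commutation of the pullback with $\rd$, and the Koszul summand via the split $\bvec{x}-\bvec{x}_f = (\bvec{x}-\bvec{x}_{f'}) + (\bvec{x}_{f'}-\bvec{x}_f)$, with the constant-vector contraction term landing in $\Poly{r-1}{}\Lambda^\ell(f')$ and the conclusion drawn from \eqref{eq:trimmed.between}. The paper's form $\alpha = \omega(\bvec{x}_{f'}-\bvec{x}_f,\cdot)$ is exactly your $\tr_{f'}(\iota_{\bvec{c}}\beta)$.
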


\begin{proof}
  We first notice that the case $\ell=0$ is obvious since, in this case, trimmed spaces are full polynomial spaces (see \eqref{eq:trimmed.spaces:ell=0}), and the trace preserves full polynomial spaces. We therefore assume in the rest of the proof that $\ell\ge 1$.
  As the Koszul operators on differential forms on $f$ and $f'$ are not the same (due to the translation by $\bvec{x}_f$ and $\bvec{x}_{f'}$, respectively), we temporarily denote them in this proof by $\kappa_f$ and $\kappa_{f'}$.

  The trace is a pullback, so it commutes with $\rd$, and we thus have
  \[
  \tr_{f'}(\rd\Poly{r}{}\Lambda^{\ell-1}(f))=\rd(\tr_{f'}\Poly{r}{}\Lambda^{\ell-1}(f))\subset\rd\Poly{r}{}\Lambda^{\ell-1}(f'),
  \]
  where the inclusion holds since the trace preserves full polynomial spaces.
  Given the definition \eqref{eq:trimmed.spaces:ell>=1} of the trimmed spaces, the lemma follows if we show that
  \begin{equation}\label{eq:trace.to.prove}
    \tr_{f'}\Koly{r}{\ell}(f)\subset \Poly{r}{-}\Lambda^\ell(f')
    \overset{\eqref{eq:trimmed.between}}=
    \Poly{r-1}{}\Lambda^\ell(f')+\Koly{r}{\ell}(f').
  \end{equation}
  Let $\omega\in \Poly{r-1}{}\Lambda^{\ell+1}(f)$. The definitions of $\tr_{f'}$ and $\kappa_f$ give, for any $\bvec{x}\in f'$ and $\bvec{v}_1,\ldots,\bvec{v}_\ell$ tangent to $f'$,
  \begin{align*}
    \tr_{f'}(\kappa_f\omega)_{\bvec{x}}(\bvec{v}_1,\ldots,\bvec{v}_\ell) ={}& \omega_{\bvec{x}}(\bvec{x}-\bvec{x}_f,\bvec{v}_1,\ldots,\bvec{v}_\ell)\\
    ={}& \omega_{\bvec{x}}(\bvec{x}_{f'}-\bvec{x}_f,\bvec{v}_1,\ldots,\bvec{v}_\ell)+\omega_{\bvec{x}}(\bvec{x}-\bvec{x}_{f'},\bvec{v}_1,\ldots,\bvec{v}_\ell)\\
    ={}&\alpha_{\bvec{x}}(\bvec{v}_1,\ldots,\bvec{v}_\ell)+(\kappa_{f'}\tr_{f'}\omega)_{\bvec{x}}(\bvec{v}_1,\ldots,\bvec{v}_\ell),
  \end{align*}
  where we have used the linearity of $\omega_{\bvec{x}}$ with respect to its first argument to obtain the second equality, and introduced the differential form $\alpha\coloneq \omega(\bvec{x}_{f'}-\bvec{x}_f,\cdot)$ in the third equality. Hence, $\tr_{f'}(\kappa_f\omega)=\alpha+\kappa_{f'}\tr_{f'}\omega$, which proves \eqref{eq:trace.to.prove} since $\alpha\in\Poly{r-1}{}\Lambda^{\ell}(f')$ (as $\bvec{x}_{f'}-\bvec{x}_f$ is constant) and $\tr_{f'}\omega\in\Poly{r-1}{}\Lambda^{\ell+1}(f')$.
\end{proof}


\section{Discrete de Rham complex}\label{sec:ddr}

We define in this section a discrete counterpart of the de Rham complex of differential forms \eqref{eq:diff.de.rham} in the spirit of \cite{Di-Pietro.Droniou.ea:20,Di-Pietro.Droniou:23*1}.
Let, from this point on, an integer $r\ge 0$ be fixed, corresponding to the polynomial degree of the discrete sequence.
The general idea is, for each form degree $k\in[0,n]$, to select the polynomial components of the discrete spaces in order to reconstruct, on each $d$-cell $f$ and iteratively on the dimension $d$:
\begin{itemize}
\item A \emph{discrete exterior derivative} in $\Poly{r}{}\Lambda^{k+1}(f)$ that can reproduce exactly the exterior derivative of differential forms in $\Poly{r+1}{-}\Lambda^k(f)$;
\item Based on this discrete exterior derivative and on traces on $(d-1)$-cells (either directly available or reconstructed), a \emph{discrete potential} in $\Poly{r}{}\Lambda^k(f)$ that can reproduce exactly differential forms belonging to this same space.
\end{itemize}

\subsection{Definition}

\subsubsection{Discrete spaces}

The discrete counterpart $\ul{X}^k_{r,h}$ of the space $H\Lambda^k(\Omega)$, $0\le k \le n$, is defined as
\begin{equation}\label{eq:global.space}
  \ul X_{r,h}^k \coloneq \bigtimes_{d=k}^n \bigtimes_{f\in \Delta_d(\Mh)}\Poly{r}{-}\Lambda^{d-k}(f),
\end{equation}
with $\times$ denoting the Cartesian product.
We define the restrictions of the global space \eqref{eq:global.space} to a mesh entity or its boundary as follows:
For~all~integers $k$ and $d$ such that $0\le k\le d\le n$ and all $f\in\Delta_d(\Mh)$,
\[
\text{%
  $\ul X_{r,f}^k \coloneq \bigtimes_{d'=k}^{d}
  \bigtimes_{f'\in\Delta_{d'}(f)} \Poly{r}{-}\Lambda^{d'-k}(f')$
  and
  $\ul X_{r,\partial f}^k\coloneq\bigtimes_{d'=k}^{d-1}
  \bigtimes_{f'\in\Delta_{d'}(f)} \Poly{r}{-}\Lambda^{d'-k}(f')$
  if $d\ge 1$.
}
\]
We shall use the notation $\ul\omega_h = (\omega_f)_{f\in\Delta_d(\Mh),\,d\in[k,n]}\in\ul X_{r,h}^k$ for a generic element of the global discrete space of $k$-forms and  $\ul\omega_f = (\omega_{f'})_{f'\in\Delta_{d'}(f),\,d'\in[k,d]}\in \ul X_{r,f}^k$ (resp., $\ul\omega_{\partial f}= (\omega_{f'})_{f'\in\Delta_{d'}(f),\,d'\in[k,d-1]}\in\ul X_{r,\partial f}^k$) for its restriction to $f$ (resp., $\partial f$), obtained collecting the components on the mesh entities $f'\in\Delta_{d'}(f)$, $d'\in[k,d]$ (resp., $d'\in[k,d-1]$).
As a generic convention in this article, underlined letters denote spaces or vectors made of polynomial components on mesh entities.
Table \ref{tab:local.spaces} gives an overview of the polynomial unknowns in $\ul X_{r,f}^k$, along with their vector proxies, in dimensions 0 to 3.

\begin{table}\centering
  \renewcommand{\arraystretch}{1.2}
  \begin{tabular}{c|cccc}
    \toprule
    \diagbox[font=\footnotesize]{$k$}{$d$}  & $0$ & $1$ & $2$ & $3$ \\
    \midrule
    0 & $\Real = \Poly{r}{}\Lambda^0(f_0)$ & $\Poly{r-1}{}\Lambda^1 (f_1)$ & $\Poly{r-1}{} \Lambda^2(f_2)$ & $\Poly{r-1}{} \Lambda^3 (f_3)$ \\
    1 & & $\Poly{r}{}\Lambda^0(f_1)$ & $\Poly{r}{-}\Lambda^1(f_2)$ & $\Poly{r}{-}\Lambda^2(f_3)$ \\
    2 & & & $\Poly{r}{}\Lambda^0(f_2)$ & $\Poly{r}{-}\Lambda^1(f_3)$ \\
    3 & & & & $\Poly{r}{}\Lambda^0(f_3)$ \\
    \midrule[1pt]
    \diagbox[font=\footnotesize]{$k$}{$d$}  & $0$ & $1$ & $2$ & $3$ \\
    \midrule
    0 & $\Real = \Poly{r}{}(f_0)$ & $\Poly{r-1}{}(f_1)$ & $\Poly{r-1}{}(f_2)$ & $\Poly{r-1}{}(f_3)$ \\
    1 & & $\Poly{r}{}(f_1)$ & $\RaviartThomas{r}(f_2)$ & $\RaviartThomas{r}(f_3)$ \\
    2 & & & $\Poly{r}{}(f_2)$ & $\Nedelec{r}(f_3)$ \\
    3 & & & & $\Poly{r}{}(f_3)$ \\
    \bottomrule
  \end{tabular}
  \caption{Polynomial components attached to each mesh entity $f_d$ of dimension $d\in\{0,\ldots,3\}$ for the space $\ul X_{r,h}^k$ for $k\in\{0,\ldots,3\}$ (top) and counterpart through vector proxies (bottom).\label{tab:local.spaces}}
\end{table}

\begin{remark}[Choice of polynomial components]
  The choice of using in \eqref{eq:global.space} component spaces spanned by $(d-k)$-forms instead of $k$-forms is motivated by the desire to recover the DDR sequence of \cite{Di-Pietro.Droniou:23*1} through vector proxies; see Example \ref{ex:ddr.interpretation} below.
  Applying the Hodge-star operator to these components could, in the Euclidean setting at least, enable us to consider components in $\Poly{r}{-}\Lambda^k(f)$ (the situation is however different when designing the method on manifolds \cite{Droniou.Hanot.ea:23}).

  Notice that it is by no means clear that full polynomial spaces could be used instead of trimmed spaces while ensuring that the discrete cohomology is isomorphic to the continuous one (cf.\ Theorem \ref{thm:cohomology:ddr} below).
  As a matter of fact, as noticed in \cite[Section 4.2]{Di-Pietro.Droniou.ea:20}, this cannot hold in dimension $n=2$ on a simply connected polygon.
  In dimension $n=3$, it has been shown in \cite{Wang.Wang.ea:21} that, in the lowest-order case, the dimension of the kernel of the discrete gradient depends on the number of edges of the polygon, which clearly prevents one from establishing an isomorphism with the de Rham cohomology.
\end{remark}

\begin{remark}[Virtual spaces]
  It is possible to identify virtual spaces underlying $\underline{X}_{r,h}^k$ and its restrictions to mesh faces $f$ in the spirit of \cite{Beirao-da-Veiga.Brezzi.ea:13}.
  These spaces, however, play no role in the following discussion, so we do not present them here to avoid confusion.
  In the present framework, the connections between polynomial components attached to a mesh cell and its boundary are not realised by a virtual function, but rather by the reconstructions presented in Section \ref{sec:ddr:potentials.differentials} below.
\end{remark}

\begin{remark}[Comparison with trimmed finite element sequences]
  A detailed comparison between the number of degrees of freedom for the DDR and classical trimmed finite element sequences for $n=3$ has been made in \cite[Table 3]{Di-Pietro.Droniou:23*2}.
  This comparison shows that the DDR complex without serendipity reduction has slightly more degrees of freedom than trimmed finite elements on tetrahedra, but fewer on hexahedra.
  This table also shows that the difference on tetrahedra can be slimmed down (and the advantage on hexahedra increased) using serendipity to reduce face and element degrees of freedom.
\end{remark}

\subsubsection{Interpolators and interpretation of the polynomial components}

The precise meaning of the components in each DDR space is provided by the corresponding interpolator.
For $f\in\Delta_d(\Mh)$ and $k\le d$, the interpolator $\ul I_{r,f}^k:C^0\Lambda^k(\overline{f})\to\ul X_{r,f}^k$ is defined by:
For all $\omega\in C^0\Lambda^k(\overline{f})$,
\begin{equation}\label{eq:interpolator}
  \ul I_{r,f}^k\omega \coloneq (\trimproj{r}{f'}{d'-k}(\star\tr_{f'}\omega) )_{f'\in\Delta_{d'}(f),\,d'\in [k,d]}.
\end{equation}
In other words, a discrete $k$-form on the mesh is made of polynomial forms attached to each mesh entity of dimension $d\ge k$; on each entity, the form is of degree $d-k$ as it corresponds to the Hodge star of an underlying $k$-form. The Hodge star operator is used in the definition of the polynomial components to ensure that the full space $\Poly{r}{}\Lambda^0(f)$ (see \eqref{eq:trimmed.spaces:ell=0}) is attached to the lowest-dimensional cells $f\in\Delta_k(\Mh)$.

\subsubsection{Local discrete potentials and discrete exterior derivative}\label{sec:ddr:potentials.differentials}

Let $0\le k\le n$ be a fixed integer. For all $f\in\Delta_d(\Mh)$ with $d\ge k$, we define the \emph{discrete potential} $\Pot{r}{f}{k} : \ul X_{r,f}^k \to \Poly{r}{} \Lambda^k(f)$ and, if $d\ge k+1$, the \emph{discrete exterior derivative} $\rd_{r,f}^{k}:\ul{X}^k_{r,f}\to\Poly{r}{}\Lambda^{k+1} (f)$ recursively on the dimension $d$ as follows:
\begin{itemize}
\item If $d=k$, then the discrete potential on $f$ is directly given by the component of $\ul\omega_f$ on $f$:
  \begin{equation}\label{eq:discrete.potential:d=k}
    \Pot{r}{f}{k}\ul\omega_f \coloneq \star^{-1}\omega_f \in \Poly{r}{} \Lambda^d(f).
  \end{equation}
\item
  If $k+1 \le d \le n$:
  \begin{enumerate}
  \item First, the discrete exterior derivative is defined by: For all $\ul\omega_f\in\ul X^k_{r,f}$,
    \begin{multline}\label{eq:discrete.exterior.derivative:ddr}
      \int_f \rd_{r,f}^{k} \ul\omega_f \wedge \mu
      = (-1)^{k+1} \int_f \star^{-1}\omega_f\wedge \rd\mu
      + \int_{\partial f} \Pot{r}{\partial f}{k}\ul\omega_{\partial f} \wedge \tr_{\partial f}{\mu}
      \\
      \forall \mu \in \Poly{r}{}\Lambda^{d-k-1}(f),
    \end{multline}
    where we have introduced the piecewise polynomial boundary potential $\Pot{r}{\partial f}{k}:\ul X_{r,\partial f}^k\to\Lambda^k(\partial f)$ such that $(\Pot{r}{\partial f}{k})_{|f'} \coloneq \Pot{r}{f'}{k}$ for all $f'\in\Delta_{d-1}(f)$ ($\Pot{r}{f'}{k}$ being the discrete potential on the $(d-1)$-cell $f'$ defined at the previous step).
  \item Then, the discrete potential on the $d$-cell $f$ is given by: For all $\ul\omega_f\in\ul X^k_{r,f}$,
    \begin{multline}\label{eq:discrete.potential}
      (-1)^{k+1}\int_f \Pot{r}{f}{k} \ul\omega_f \wedge (\rd\mu+\nu)\\
      = \int_f \rd_{r,f}^{k} \ul\omega_f \wedge \mu
      -  \int_{\partial f}  \Pot{r}{\partial f}{k}\ul\omega_{\partial f} \wedge \tr_{\partial f}{\mu}
      + (-1)^{k+1}\int_f \star^{-1}\omega_f \wedge\nu
      \\
      \forall (\mu,\nu) \in \Koly{r+1}{d-k-1}(f) \times \Koly{r}{d-k}(f).
    \end{multline}
  \end{enumerate}
\end{itemize}

Some remarks are in order.

\begin{remark}[Boundary integration and orientation]
  Above and in the rest of the paper, any integral $\int_{\partial f}$ on the boundary a cell $f$ is considered according to the orientation induced
  by the cell $f$. As a consequence, if $\varepsilon_{f\!f’}\in\{-1,1\}$ denotes the orientation of $f’\in\Delta_{d-1}(f)$ relative to $f$, we have
  \begin{equation}\label{eq:integral.relative.orientation}
    \int_{\partial f} \bullet= \sum_{f’\in\Delta_{d-1}(f)}\varepsilon_{f\!f’}\int_{f’}\bullet.
  \end{equation}
\end{remark}

\begin{remark}[Definitions \eqref{eq:discrete.exterior.derivative:ddr} and \eqref{eq:discrete.potential}]\label{rem:riesz.d.pot}
  The fact that condition \eqref{eq:discrete.exterior.derivative:ddr} defines $\rd_{r,f}^{k} \ul\omega_f$ uniquely is an immediate consequence of the Riesz representation theorem for $\Poly{r}{}\Lambda^{k+1}(f)$ equipped with the \mbox{$L^2$-product} $(\rho,\beta)\ni L^2\Lambda^{k+1}(f)\times L^2\Lambda^{k+1}(f)\mapsto\int_f\rho\wedge\star\beta\in\Real$, after observing that \eqref{eq:discrete.exterior.derivative:ddr} can be equivalently reformulated as follows (notice the change in the degree of the test differential form, with $\beta$ below corresponding to $\star^{-1}\mu$ in \eqref{eq:discrete.exterior.derivative:ddr}):
  \[
  \int_f\rd_{r,f}^{k} \ul\omega_f\wedge\star\beta
  = (-1)^{k+1}\int_f \omega_f\wedge\star \rd\star\beta
  + \int_{\partial f} \Pot{r}{\partial f}{k}\ul\omega_{\partial f}\wedge \tr_{\partial f}\star\beta
  \quad\forall\beta\in\Poly{r}{}\Lambda^{k+1}(f),
  \]
  where we have additionally used \eqref{eq:commut.star.wedge} for the first term in the right-hand side.
  Similar considerations apply to the definition \eqref{eq:discrete.potential} of $\Pot{r}{f}{k}$, applying the isomorphism \eqref{eq:isomorphism.Pr.koszul} with $\ell=d-k\ge 1$.

  Notice that one cannot substitute \eqref{eq:discrete.exterior.derivative:ddr} into \eqref{eq:discrete.potential}, as the polynomial degree of the test function $\mu$ in this second relation is one unit higher.
  In view of \eqref{eq:proj.potential} below, the potential reconstruction can be regarded as a higher-order \emph{enhancement} of $\star\omega_f$ exploiting the additional information provided by the components on the boundary of the subcells.
\end{remark}

\begin{remark}[Validity of \eqref{eq:discrete.potential}]\label{rem:discrete.potential:validity}
  For $k+1\le d\le n$, equation \eqref{eq:discrete.potential} actually holds for all $\mu\in\Poly{r+1}{-}\Lambda^{d-k-1}(f)$.
  To prove this assertion, since \eqref{eq:discrete.potential} holds for $\mu\in\Koly{r+1}{d-k-1}(f)$, it suffices to show that it also holds for $\nu=0$ and $\mu$ belonging to $\Poly{0}{}\Lambda^0(f)$ if $d=k+1$ (see \eqref{eq:trimmed.spaces:ell=0} and \eqref{eq:decomposition.Pr:ell=0}) or $\rd\Poly{r+1}{}\Lambda^{d-k-2}(f)$ if $d\ge k+2$ (see \eqref{eq:trimmed.spaces:ell>=1}).
  In both cases, we have $\rd\mu=0$, so that the left-hand side of \eqref{eq:discrete.potential} vanishes; since $\mu\in\Poly{r}{}\Lambda^{d-k-1}(f)$, the right-hand side of \eqref{eq:discrete.potential} also vanishes due to the definition~\eqref{eq:discrete.exterior.derivative:ddr} of the discrete exterior derivative, which concludes the argument.
\end{remark}

\begin{remark}[Potential for $k=0$]\label{rem:improved.Pot.k=0}
  In the case $k=0$, we can define an improved potential $\Pot{r+1}{f}{0}:\ul X_{r,f}^0\to \Poly{r+1}{}\Lambda^0(f)$ of polynomial degree $r+1$ (instead of $r$) as follows:
  For all $\ul\omega_f\in \ul X_{r,f}^0$,
  \begin{itemize}
  \item If $d=0$, then $\Pot{r+1}{f}{0}\ul\omega_f=\star^{-1}\omega_f\in\Poly{r}{}\Lambda^0(f)\cong \Real\cong \Poly{r+1}{}\Lambda^{0}(f)$ (since $f$ has dimension $0$);
  \item If $1\le d\le n$,
    \begin{equation}\label{eq:discrete.potential.k=0}
      - \int_f \Pot{r+1}{f}{0} \ul\omega_f \wedge \rd\mu= \int_f \rd_{r,f}^{0} \ul\omega_f \wedge \mu
      -  \int_{\partial f}  \Pot{r+1}{\partial f}{0}\ul\omega_{\partial f} \wedge \tr_{\partial f}{\mu}\qquad
      \forall \mu \in \Koly{r+2}{d-1}(f).
    \end{equation}
  \end{itemize}
  This definition is justified by the isomorphism \eqref{eq:isomorphism.Pr.koszul} with $\ell=d$ and $r+1$ instead of $r$ (recalling that $\Koly{r+1}{d}(f)=\{0\}$), and it can easily be checked, testing \eqref{eq:discrete.potential} and \eqref{eq:discrete.potential.k=0} with $\mu \in \Koly{r+1}{d-1}(f)$, that $\lproj{r}{f}{0}\Pot{r+1}{f}{0}\ul\omega_f=\Pot{r}{f}{0}\ul\omega_f$. We will moreover see in Remark \ref{rem:consistency.improved.Pot.k=0} that $\Pot{r+1}{f}{0}$ enjoys optimal consistency properties.
\end{remark}

\begin{remark}[Space of DDR potentials]
  The space of DDR reconstructed potentials, that is,
  \[
  \{(\Pot{r}{f}{k}\ul\omega_h)_{f\in\Delta_d(\Mh),\,d\in[k,n]}\,:\,\ul\omega_h\in\ul X_{r,h}^k\}
  \]
  cannot be considered as a space of differential forms with global regularity, as the reconstructed polynomials do not have any compatibility condition of the traces; they are inherently piecewise discontinuous polynomials.
\end{remark}

\begin{example}[Interpretation in terms of vector proxies]\label{ex:ddr.interpretation}
  We start by considering $k=0$. In this case, formula~\eqref{eq:discrete.potential:d=k} means that (constant) real values are attached to the vertices $f_0=V\in \Vh=\Delta_0(\Mh)$ of the mesh, so that an iterative procedure can be initialised to reconstruct discrete gradients and related traces/potentials over higher-dimensional cells.
  Indeed, formula~\eqref{eq:discrete.exterior.derivative:ddr} reconstructs a (scalar) gradient over edges $f_1 = E\in \Eh = \Delta_1(\Mh)$ (i.e., the derivative along the direction given by the orientation of~$E$) based on the values at the vertices and the value on the edge itself.
  This edge gradient, in turn, enters~\eqref{eq:discrete.potential} to define a scalar edge trace over $E$.
  When $d$ takes the values $2$ and $3$, the successive application of formulas \eqref{eq:discrete.exterior.derivative:ddr}--\eqref{eq:discrete.potential} defines, respectively, the pairs (face gradient, scalar face trace) on mesh faces $f_2 = F \in \Fh = \Delta_2(\Mh)$, and (element gradient, scalar element potential) on mesh elements $f_3 = T \in \Th = \Delta_3(\Mh)$.
  \smallskip

  Let us now turn to the case $k=1$, for which we provide more details.
  The vector proxy for the space $\ul X_{r,h}^1$ is the space
  \[
  \Xcurl{h}
  = \bigtimes_{E\in\Eh}\Poly{r}{}(E)\times\bigtimes_{F\in\Fh}\RaviartThomas{r}(F)\times\bigtimes_{T\in\Th}\RaviartThomas{r}(T)
  \]
  and, with standard DDR notation, we denote by $\Xcurl{Y}$ its restriction to a mesh element or face $Y\in\Th\cup\Fh$.
  By~\eqref{eq:discrete.potential:d=k} with $d = k = 1$, the reconstruction process is initialised by $1$-forms, whose vector proxies are scalar-valued polynomials of degree $r$ over edges $f_1 = E\in \Eh$ that play the role of edge tangential traces.

  Then, for each mesh face $f_2 = F\in \Fh$, we sequentially reconstruct a scalar face curl $\CF:\Xcurl{F}\to\Poly{r}{}(F)$ by~\eqref{eq:discrete.exterior.derivative:ddr} with $d=k+1=2$ and a vector face tangential trace $\trFt:\Xcurl{F}\to\vPoly{r}{}(F)$ by~\eqref{eq:discrete.potential}.
  Specifically, $\CF$ is such that, for all $\uvec{v}_F=\big((v_E)_{E\in\EF}, \bvec{v}_{F}\big)\in\Xcurl{F}$,
  \[
  \int_F\CF\uvec{v}_F~q
  = \int_F\bvec{v}_{F}\cdot\VROT_F q
  + \sum_{E\in\EF}\varepsilon_{FE}\int_E v_E~q
  \qquad
  \forall q\in\Poly{r}{}(F),
  \]
  where, for all $E\in\EF$ (the set of edges of $F$), $\varepsilon_{FE}\in\{-1,+1\}$ denotes the orientation of $E$ relative to $F$, while $\trFt$ satisfies, for all $\uvec{v}_F\in\Xcurl{F}$,
  \begin{multline*}
    \int_F\trFt\uvec{v}_F\cdot(\VROT_F q + \bvec{w})
    = \int_F\CF\uvec{v}_F~q
    - \sum_{E\in\EF}\varepsilon_{FE}\int_E v_E~q
    + \int_F\bvec{v}_{F}\cdot\bvec{w}, \\
    \forall(q,\bvec{w}) \in\Poly{r+1}{\flat}(F) \times \cRoly{r}(F).
  \end{multline*}
  The alternative interpretation of $1$-forms in dimension $d=2$ results in a rotation of $\Xcurl{F}$ by a right angle.
  Correspondingly, \eqref{eq:discrete.exterior.derivative:ddr} yields a face divergence (see \eqref{eq:d.proxy.n=2.a} and \eqref{eq:d.proxy.n=2.b} in Appendix \ref{sec:exterior.calculus.Rn}).

  Next, for each mesh element $f_3 = T \in \Th$,~\eqref{eq:discrete.exterior.derivative:ddr} defines the element curl $\CT:\Xcurl{T}\to\vPoly{r}{}(T)$ such that, for all $\uvec{v}_T=\big((v_E)_{E\in\ET}, (\bvec{v}_{F})_{F\in\FT}, \bvec{v}_{T}\big)\in\Xcurl{T}$,
  \[
  \int_T\CT\uvec{v}_T\cdot\bvec{w}
  = \int_T\bvec{v}_{T}\cdot\CURL\bvec{w}
  + \sum_{F\in\FT}\varepsilon_{TF}\int_F\trFt\uvec{v}_F\cdot(\bvec{w}\times\normal_F)
  \qquad\forall\bvec{w}\in\vPoly{r}{}(T),
  \]
  where, for all $F\in\FT$ (the set of faces of $T$), $\varepsilon_{TF}\in\{-1,+1\}$ denotes the orientation of $F$ relative to $T$, while \eqref{eq:discrete.potential} defines the vector potential $\Pcurl:\Xcurl{T}\to\vPoly{r}{}(T)$ such that, for all $\uvec{v}_T\in\Xcurl{T}$,
  \begin{multline*}
    \int_T\Pcurl\uvec{v}_T\cdot(\CURL\bvec{w} + \bvec{z})
    = \int_T\CT\uvec{v}_T\cdot\bvec{w}
    - \sum_{F\in\FT}\varepsilon_{TF}\int_F \trFt\uvec{v}_F\cdot(\bvec{w}\times\normal_F)
    +\int_T \bvec{v}_{T}\cdot \bvec{z}
    \\
    \forall(\bvec{w},\bvec{z}) \in\cGoly{r+1}(T) \times \cRoly{r}(T).
  \end{multline*}

  When $k=2$, \eqref{eq:discrete.exterior.derivative:ddr} reconstructs on mesh elements $f_3  = T \in \Th$ a discrete divergence of order $r$ based on the polynomial scalar trace defined by~\eqref{eq:discrete.potential:d=k}, which plays the role of a normal trace on the face $f_2 = F \in \FT$. Then, \eqref{eq:discrete.potential} defines a vector potential of degree $r$ over $T$.
  \smallskip

  Finally, in the case $k=3$, \eqref{eq:discrete.potential:d=k} simply yields a polynomial over mesh elements $f_3=T\in \Th$.
\end{example}

\subsubsection{Global discrete exterior derivative and DDR complex}

To arrange the spaces $\ul X_{r,h}^k$ into a sequence that mimics the continuous de Rham complex, for any form degree $k$ such that $0\le k\le n-1$, we introduce the \emph{global discrete exterior derivative} $\ud_{r,h}^{k} : \ul X^k_{r,h} \to \ul X_{r,h}^{k+1}$ defined as follows:
\begin{equation}\label{eq:global.discrete.exterior.derivative}
  \ud_{r,h}^k\ul\omega_h
  \coloneq \big(
  \trimproj{r}{f}{d-k-1}(\star\rd_{r,f}^k\ul\omega_f)
  \big)_{f\in\Delta_d(\Mh),\,d\in[k+1,n]}.
\end{equation}
In what follows, given a $d$-cell $f\in\Delta_d(\Mh)$ with $d\in[k+1,n]$, we denote by $\ud_{r,f}^k$ the \emph{local discrete exterior derivative} collecting the components of $\ud_{r,h}^k$ on $f$ and its boundary.
The DDR sequence reads
\begin{equation}\label{eq:ddr}
  \begin{tikzcd}
    \DDR{r}\coloneq
    \{0\} \arrow{r}{}
    &\ul X_{r,h}^0 \arrow{r}{\ud_{r,h}^0}
    &\ul X_{r,h}^1 \arrow{r}{}
    &\cdots \arrow{r}{}
    &\ul X_{r,h}^{n-1} \arrow{r}{\ud_{r,h}^{n-1}}
    &\ul X_{r,h}^{n} \arrow{r}{}
    & \{0\}.
  \end{tikzcd}
\end{equation}
The main results concerning this sequence are stated hereafter.

\begin{theorem}[Cohomology of the Discrete de Rham complex]\label{thm:cohomology:ddr}
  The DDR sequence \eqref{eq:ddr} is a complex and its cohomology is isomorphic to the cohomology of the continuous de Rham complex \eqref{eq:diff.de.rham}.
\end{theorem}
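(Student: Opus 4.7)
The theorem consists of two independent claims: the complex property $\ud^{k+1}_{r,h}\circ\ud^k_{r,h}=0$, and the cohomology isomorphism with \eqref{eq:diff.de.rham}. My plan is to treat them in that order.

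For the \emph{complex property}, I would argue cell-by-cell, reducing to the local identity $\rd^{k+1}_{r,f}\ud^k_{r,f}\ul\omega_f=0$ on every $f\in\Delta_d(\Mh)$ with $d\ge k+2$; the $\trimproj{r}{f}{d-k-2}\star$ projections hidden in \eqref{eq:global.discrete.exterior.derivative} then transmit this to the global operator. Testing the defining relation \eqref{eq:discrete.exterior.derivative:ddr} for $\rd^{k+1}_{r,f}\ud^k_{r,f}\ul\omega_f$ against $\mu\in\Poly{r}{}\Lambda^{d-k-2}(f)$, the volumetric contribution reads $(-1)^{k+2}\int_f\star^{-1}(\ud^k_{r,h})_f\wedge\rd\mu$; since $\rd\mu\in\Poly{r-1}{}\Lambda^{d-k-1}(f)\subset\Poly{r}{-}\Lambda^{d-k-1}(f)$, relation \eqref{eq:remove.projector} lets me strip the trimming projector and rewrite this quantity as $(-1)^{k+2}\int_f\rd^k_{r,f}\ul\omega_f\wedge\rd\mu$. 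Applying \eqref{eq:discrete.exterior.derivative:ddr} once more with test form $\rd\mu$, and using $\rd\circ\rd=0$ on the resulting volumetric term, reduces this to the boundary integral of $\Pot{r}{\partial f}{k}\ul\omega_{\partial f}\wedge\rd\tr_{\partial f}\mu$. Stokes on each $(d-1)$-cell of $\partial f$, together with an auxiliary identity expressing the trace of $\Pot{r}{f'}{k+1}\ud^k_{r,f'}\ul\omega_{f'}$ in terms of $\rd\Pot{r}{f'}{k}\ul\omega_{f'}$ and boundary data (to be proved in parallel by induction on $d$), produces the cancellation with the remaining boundary contribution in \eqref{eq:discrete.exterior.derivative:ddr}; the leftover integrals on codimension-$2$ cells vanish because each such cell appears in two faces of $f$ with opposite induced orientations, i.e., $\partial\circ\partial=0$.

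For the \emph{cohomology isomorphism}, I plan to mimic the approach used for the DDR complex in vector proxies \cite{Di-Pietro.Droniou.ea:20,Di-Pietro.Droniou:23*1}. The goal is to construct, by induction on cell dimension, an extension operator $\Ext{f}{k}$ from $\ul X^k_{r,f}$ into a space of piecewise smooth $k$-forms on a refinement of $f$, together with a reduction operator $\Red{f}{k}$ going the other way, such that: (i) both commute with the respective differentials; (ii) $\Red{f}{k}\circ\Ext{f}{k}$ is the identity on $\ul X^k_{r,f}$; (iii) $\Ext{f}{k}\circ\Red{f}{k}$ is chain-homotopic to the identity on a reference continuous complex whose cohomology is already known to match that of \eqref{eq:diff.de.rham}. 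The commutation $\ud^k_{r,h}\ul I^k_{r,h}\omega=\ul I^{k+1}_{r,h}\rd\omega$ for smooth forms, which follows directly from the Stokes formula \eqref{eq:ipp} applied in the defining relation \eqref{eq:discrete.exterior.derivative:ddr}, is a recurring building block. A standard cochain-map argument then identifies the DDR cohomology with that of the reference complex, which in turn coincides with the de Rham cohomology of $\Omega$ via a classical de Rham-type theorem on the CW-complex $\Mh$.

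The \emph{main obstacle} is establishing a discrete Poincar\'e lemma: the restriction of $\DDR{r}$ to a star-shaped cell $f$, with vanishing boundary data, should be exact in every positive degree (with the expected kernel at degree zero). This is what makes the extension/reduction construction feasible at each inductive step, and it rests on a careful use of the decomposition \eqref{eq:decomposition.Pr}, the isomorphism \eqref{eq:isomorphism.Prtrimmed.koszul}, and the injectivity of $\rd$ on Koszul spaces, with the Koszul operator itself serving as a contracting homotopy at the polynomial level. Once local exactness is available, the passage from local to global cohomology is combinatorial, reflecting the CW structure of $\Mh$ together with the topological invariance of de Rham cohomology.
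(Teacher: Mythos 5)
Your treatment of the complex property is essentially the paper's: the paper proves $\rd^k_{r,f}(\ud^{k-1}_{r,f}\ul\omega_f)=0$ jointly with the auxiliary identity $\Pot{r}{f}{k}(\ud^{k-1}_{r,f}\ul\omega_f)=\rd^{k-1}_{r,f}\ul\omega_f$ by induction on $d-k$, using exactly the projector stripping via \eqref{eq:remove.projector}, Stokes on each $(d-1)$-cell, and the $\partial\circ\partial=0$ cancellation you describe (Lemma \ref{lem:link.diff.subcells} and Theorem \ref{thm:link.pot.diff}). One caveat: the auxiliary identity must involve the \emph{discrete} exterior derivative $\rd^{k}_{r,f'}\ul\omega_{f'}$, not $\rd\,\Pot{r}{f'}{k}\ul\omega_{f'}$; the continuous exterior derivative of the reconstructed potential is not controlled and should not enter the argument.

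The cohomology part has a genuine gap. You propose to compare \DDR{r} with a continuous reference complex via an extension into piecewise smooth forms on a refinement, and to obtain the required local exactness from a discrete Poincar\'e lemma whose homotopy is ``the Koszul operator at the polynomial level''. But the Koszul contracting homotopy acts on spaces of polynomial differential forms equipped with the exterior derivative $\rd$; the DDR spaces are vectors of mutually disconnected polynomial components attached to cells of every dimension $\ge k$, and the differential is the bespoke operator \eqref{eq:global.discrete.exterior.derivative}, not $\rd$. There is no evident way to make a Koszul (or flow-based) homotopy act coherently on these hierarchical components --- this is precisely the difficulty the paper flags at the start of Section \ref{sec:complex+cohomology:cohomology} before abandoning the homotopy-operator route. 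The paper instead (i) compares \DDR{r} with the fully discrete lowest-order complex \DDR{0}, which is isomorphic to the cellular cochain complex of $\Mh$, via reduction/extension maps satisfying the algebraic conditions (C1)--(C3) of \cite{Di-Pietro.Droniou:23*2}, and (ii) replaces your local Poincar\'e lemma by the exactness of the global subcomplex $\ul X_{r,h,\flat}^k$ of vectors with zero average on $k$-cells (Lemma \ref{lem:exact.Xsharp}), established by recursively constructing a primitive over cells of increasing dimension starting from the zero datum on $(k-1)$-cells. Your plan also leaves unconstructed the extension into smooth forms commuting with the differentials --- in effect the conforming virtual spaces underlying DDR, which the paper deliberately avoids; without either that construction or a workable homotopy, the argument does not close.
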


\begin{proof}
  See Section~\ref{sec:complex+cohomology:cohomology}.
\end{proof}

\begin{theorem}[Polynomial consistency of the discrete potential and exterior derivative]\label{thm:polynomial.consistency:ddr}
  For all integers $0\le k\le d\le n$ and all $f\in\Delta_d(\Mh)$, it holds
  \begin{equation}\label{eq:discrete.potential:polynomial.consistency}
    \Pot{r}{f}{k}\ul I_{r,f}^k\omega = \omega
    \qquad\forall\omega\in\Poly{r}{}\Lambda^k(f),
  \end{equation}
  and, if $d\ge k+1$,
  \begin{equation}\label{eq:discrete.exterior.derivative:polycons}
    \rd_{r,f}^k\ul I_{r,f}^k\omega = \rd\omega
    \qquad\forall\omega\in\Poly{r+1}{-}\Lambda^k(f).
  \end{equation}
\end{theorem}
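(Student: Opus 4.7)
The plan is an induction on $d=\dim f\ge k$. At the base level $d=k$, \eqref{eq:discrete.exterior.derivative:polycons} is vacuous; for \eqref{eq:discrete.potential:polynomial.consistency}, it suffices to notice that $\star\omega\in\Poly{r}{}\Lambda^0(f)=\Poly{r}{-}\Lambda^0(f)$ (by \eqref{eq:trimmed.spaces:ell=0}), so $\trimproj{r}{f}{0}$ leaves $\star\omega$ unchanged and \eqref{eq:discrete.potential:d=k} gives $\Pot{r}{f}{k}\ul I_{r,f}^k\omega=\star^{-1}\star\omega=\omega$.

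For the inductive step at $f$ of dimension $d>k$, I would first establish the $\rd$-consistency. Plugging $\ul I_{r,f}^k\omega$ into \eqref{eq:discrete.exterior.derivative:ddr} and testing against $\mu\in\Poly{r}{}\Lambda^{d-k-1}(f)$: since $\rd\mu\in\Poly{r-1}{}\Lambda^{d-k}(f)\subset\Poly{r}{-}\Lambda^{d-k}(f)$, formula \eqref{eq:remove.projector} collapses the volume term to $(-1)^{k+1}\int_f\omega\wedge\rd\mu$. On each $f'\in\Delta_{d-1}(f)$, Lemma \ref{lem:traces.trimmed} yields $\tr_{f'}\omega\in\Poly{r+1}{-}\Lambda^k(f')$ while $\tr_{f'}\mu\in\Poly{r}{}\Lambda^{d'-k}(f')$; invoking the moment identity discussed below then replaces $\Pot{r}{f'}{k}\ul I_{r,f'}^k(\tr_{f'}\omega)$ by $\tr_{f'}\omega$ under $\int_{f'}\cdot\wedge\tr_{f'}\mu$. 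Stokes' formula \eqref{eq:ipp} reassembles the two contributions into $\int_f\rd\omega\wedge\mu$; as this identity holds for every test $\mu$ with both sides in $\Poly{r}{}\Lambda^{k+1}(f)$, nondegeneracy of the wedge-pairing yields \eqref{eq:discrete.exterior.derivative:polycons}.

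The potential consistency \eqref{eq:discrete.potential:polynomial.consistency} then follows by testing \eqref{eq:discrete.potential} for $\omega\in\Poly{r}{}\Lambda^k(f)$ against $\rd\mu+\nu$ spanning $\Poly{r}{}\Lambda^{d-k}(f)$ through the isomorphism \eqref{eq:isomorphism.Pr.koszul}, with $(\mu,\nu)\in\Koly{r+1}{d-k-1}(f)\times\Koly{r}{d-k}(f)$: the derivative term is transformed via \eqref{eq:discrete.exterior.derivative:polycons} (just established), the Koszul term via \eqref{eq:remove.projector}; crucially, since $\omega\in\Poly{r}{}\Lambda^k(f)$, the trace $\tr_{f'}\omega$ lies in the smaller space $\Poly{r}{}\Lambda^k(f')$, and the inductive \eqref{eq:discrete.potential:polynomial.consistency} supplies the \emph{pointwise} equality $\Pot{r}{f'}{k}\ul I_{r,f'}^k(\tr_{f'}\omega)=\tr_{f'}\omega$, which controls the boundary integral against any test. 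Stokes \eqref{eq:ipp} followed by nondegeneracy of the wedge-pairing on $\Poly{r}{}\Lambda^k(f)\times\Poly{r}{}\Lambda^{d-k}(f)$ concludes.

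The main obstacle is the auxiliary moment identity invoked for the $\rd$-consistency boundary term: for every $\tilde\omega\in\Poly{r+1}{-}\Lambda^k(f')$ and $\eta\in\Poly{r}{}\Lambda^{d'-k}(f')$, one needs $\int_{f'}\Pot{r}{f'}{k}\ul I_{r,f'}^k\tilde\omega\wedge\eta=\int_{f'}\tilde\omega\wedge\eta$. Because $\tilde\omega$ lies in the strictly larger trimmed space, the pointwise statement \eqref{eq:discrete.potential:polynomial.consistency} on $f'$ cannot be invoked directly, so this weaker integral identity must itself be propagated through the induction, its proof on $f'$ mirroring the argument above (via \eqref{eq:discrete.potential}, \eqref{eq:discrete.exterior.derivative:polycons} on $f'$, \eqref{eq:remove.projector}, and Stokes' formula) and requiring, through the boundary contributions on $\partial f'$, an analogous identity where the test space is enlarged to accommodate traces of Koszul forms of degree $r+1$. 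Controlling this recursion — and in particular ensuring that the chain of moment identities closes up through cells of all intermediate dimensions down to the base case $d=k$ — is the most delicate technical point of the theorem.
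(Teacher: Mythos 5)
Your base case and your treatment of the potential consistency \eqref{eq:discrete.potential:polynomial.consistency} (given the derivative consistency) match the paper's argument. The problem is your route to \eqref{eq:discrete.exterior.derivative:polycons}. You attempt a direct unwinding of \eqref{eq:discrete.exterior.derivative:ddr}, which forces you to control the boundary term $\int_{\partial f}\Pot{r}{\partial f}{k}\ul I_{r,\partial f}^k(\tr_{\partial f}\omega)\wedge\tr_{\partial f}\mu$ for $\omega\in\Poly{r+1}{-}\Lambda^k(f)$. Since $\tr_{f'}\omega$ only lies in $\Poly{r+1}{-}\Lambda^k(f')$ and not in $\Poly{r}{}\Lambda^k(f')$, the pointwise consistency on $f'$ is unavailable, and you correctly identify that a weaker ``moment identity'' is needed --- but you do not prove it; you explicitly leave its recursive closure as ``the most delicate technical point''. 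That is the entire content of the hard case, so as written the proof has a genuine gap. Worse, the recursion you sketch is not obviously closeable in the form you state it: already at the second level (unwinding \eqref{eq:discrete.potential} on $f'$) the test functions become traces of Koszul forms of degree $r+1$, so the required identity asks a degree-$r$ polynomial $\Pot{r}{f''}{k}\ul I_{r,f''}^k\tilde\omega$ to reproduce the moments of a degree-$(r+1)$ form against a test space of degree $r+1$; this is an overdetermined condition that cannot be granted by dimension counting alone and would need a separate structural argument.

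The paper avoids this obstacle entirely by changing the induction variable and by leaning on two results established beforehand: the potential--derivative link $\Pot{r}{f}{k+1}(\ud_{r,f}^{k}\ul\omega_f)=\rd_{r,f}^{k}\ul\omega_f$ of Theorem~\ref{thm:link.pot.diff} (i.e.\ \eqref{eq:link.pot.diff} with $k+1$ in place of $k$) and the commutation property \eqref{eq:commut.d}. Chaining these gives
\[
\rd_{r,f}^k\ul I_{r,f}^k\omega=\Pot{r}{f}{k+1}(\ud_{r,f}^k\ul I_{r,f}^k\omega)=\Pot{r}{f}{k+1}\ul I_{r,f}^{k+1}(\rd\omega),
\]
and since $\rd\omega\in\Poly{r}{}\Lambda^{k+1}(f)$ the derivative consistency at level $(d,k)$ reduces to the potential consistency at level $(d,k+1)$. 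Running the induction on $\rho=d-k$ (rather than on $d$ for fixed $k$, as you do) makes that case available as the induction hypothesis, and no boundary moment identity for trimmed forms of degree $r+1$ is ever needed. If you want to salvage your direct approach, you would have to formulate and prove the moment identity as a separate lemma propagated through all intermediate dimensions, carefully tracking the enlargement of the test space from $\Poly{r}{}$ to $\Poly{r+1}{-}$; the paper's reduction via \eqref{eq:link.pot.diff} and \eqref{eq:commut.d} is both shorter and the intended mechanism.
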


\begin{proof}
  See Section~\ref{sec:complex:polynomial.consistency:ddr}.
\end{proof}

\begin{remark}[Consistency of traces]\label{rem:discrete.potential:trace.consistency}
  The above theorem actually implies that, for any $d$-face $f\in\Delta_d(\Mh)$, $d\in[k,n]$, any $\omega\in \Poly{r}{}\Lambda^k(f)$, and any integer $d'\in[k,d]$,
  \begin{equation*}
    P_{r,f'}^k\ul I_{r,f'}^k\omega = \tr_{f'}\omega
    \qquad\forall f'\in\Delta_{d'}(f).
  \end{equation*}
  This can be easily seen noticing that $\ul I_{r,f'}^k\omega = \ul I_{r,f'}^k\tr_{f'}\omega$ and $\tr_{f'}\omega\in\Poly{r}{}\Lambda^k(f')$, and invoking \eqref{eq:discrete.potential:polynomial.consistency} with $(f,\omega)\gets(f',\tr_{f'}\omega)$.
\end{remark}

To state the consistency properties of the potential reconstruction and of the discrete exterior derivative we introduce, for any real number $p\in[1,\infty]$ and any integer $s\ge 0$, the following scaled seminorm on $W^{\max(r+1,s),p}\Lambda^k(f)$:
\begin{equation}\label{eq:def.Wspmax.norm}
  \seminorm{W^{(r+1,s),p}\Lambda^k(f)}{\omega}\coloneq
  \left\{\begin{array}{cc}
	\displaystyle\seminorm{W^{r+1,p}\Lambda^k(f)}{\omega}&\mbox{ if $s\le r+1$},\\[.3em]
	\displaystyle\sum_{t=r+1}^s h_f^{t-r-1}\seminorm{W^{t,p}\Lambda^k(f)}{\omega}&\mbox{ if $s> r+1$}.
  \end{array}\right.
\end{equation}
\begin{corollary}[Consistency of the discrete potential and exterior derivative on smooth forms]
  \label{cor:polynomial.consistency.smooth:ddr}
  Let $0\le k\le d\le n$ be integers, let $f\in\Delta_d(\Mh)$, and take $\theta>0$ such that $f$ is connected by star-shaped sets with parameter $\theta$, see \cite[Definition 1.41]{Di-Pietro.Droniou:20} (in particular, $f$ satisfies this assumption if it is star-shaped with respect to a ball of diameter $\theta h_f$). Then, for all $p\in [1,\infty]$ and all integer $s$ such that $sp>d$, there exists $C>0$ depending only on $\theta$, $d$, $k$, $s$ and $r$ such that, for all integer $0\le m\le r+1$,
  \begin{equation}\label{eq:discrete.potential:polynomial.consistency.smooth}
    \seminorm{W^{m,p}\Lambda^k(f)}{\Pot{r}{f}{k}\ul I_{r,f}^k\omega - \omega}\le Ch_f^{r+1-m}\seminorm{W^{(r+1,s),p}\Lambda^k(f)}{\omega}
    \qquad\forall\omega\in W^{\max(r+1,s),p}\Lambda^k(f),
  \end{equation}
  and, if $d\ge k+1$,
  \begin{multline}\label{eq:discrete.exterior.derivative:polycons.smooth}
    \seminorm{W^{m,p}\Lambda^{k+1}(f)}{\rd_{r,f}^k\ul I_{r,f}^k\omega - \rd\omega}\le Ch_f^{r+1-m}\seminorm{W^{(r+1,s),p}\Lambda^{k+1}(f)}{\rd\omega}
    \\
    \forall\omega\in C^1\Lambda^k(\overline{f})\mbox{ s.t. }\rd\omega\in W^{\max(r+1,s),p}\Lambda^{k+1}(f).
  \end{multline}
\end{corollary}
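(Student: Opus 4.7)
\textbf{Proof plan (Corollary~\ref{cor:polynomial.consistency.smooth:ddr}).}

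The strategy is a classical Bramble--Hilbert argument layered on top of the polynomial consistency of Theorem~\ref{thm:polynomial.consistency:ddr}, combined with an $L^p$-stability bound for the reconstructed operators and an inverse inequality. The assumption $sp>d$ enters through the Sobolev embedding $W^{s,p}(f) \hookrightarrow C^0(\overline f)$ and its trace analogues on subcells: the interpolator $\ul I_{r,f}^k$ acts on traces of $\omega$ on every $f' \in \Delta_{d'}(f)$ with $d' \in [k,d]$, and these traces must be pointwise controlled for the polynomial components to be bounded.

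For \eqref{eq:discrete.potential:polynomial.consistency.smooth}, I would let $\pi\omega \in \Poly{r}{}\Lambda^k(f)$ be an averaged Taylor polynomial approximation of $\omega$, available with Bramble--Hilbert estimates because $f$ is connected by star-shaped sets. Polynomial consistency \eqref{eq:discrete.potential:polynomial.consistency} gives $\Pot{r}{f}{k}\ul I_{r,f}^k \pi\omega = \pi\omega$, hence
\begin{equation*}
\Pot{r}{f}{k}\ul I_{r,f}^k \omega - \omega = \Pot{r}{f}{k}\ul I_{r,f}^k (\omega - \pi\omega) - (\omega - \pi\omega).
\end{equation*}
The second term is controlled by $h_f^{r+1-m}\seminorm{W^{r+1,p}\Lambda^k(f)}{\omega}$ via Bramble--Hilbert. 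For the first, since it is a polynomial of degree $\le r$ on a shape-regular cell, an inverse inequality reduces the $W^{m,p}$-bound to an $L^p$-bound with a factor $h_f^{-m}$. The remaining ingredient is an $L^p$-stability estimate of the type
\begin{equation*}
\norm{L^p\Lambda^k(f)}{\Pot{r}{f}{k}\ul I_{r,f}^k\eta} \le C \sum_{d'=k}^d h_f^{(d-d')/p}\!\max_{f'\in\Delta_{d'}(f)} \norm{L^\infty\Lambda^k(f')}{\tr_{f'}\eta},
\end{equation*}
proved by induction on $d$ by unfolding the recursive definitions~\eqref{eq:discrete.potential:d=k}--\eqref{eq:discrete.potential}, using $L^2$-boundedness of the projectors $\trimproj{r}{f'}{d'-k}$, Cauchy--Schwarz on boundary integrals, and equivalence of polynomial norms up to $h$-scaling. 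Applying it to $\eta = \omega - \pi\omega$ and invoking scaled Sobolev embeddings on each subcell (valid because $sp > d$) converts every $\norm{L^\infty}{\tr_{f'}(\omega-\pi\omega)}$ into powers of $h_f$ times seminorms $\seminorm{W^{t,p}\Lambda^k(f)}{\omega}$ for $r+1 \le t \le \max(r+1,s)$. Collecting all these contributions yields precisely $h_f^{r+1}\seminorm{W^{(r+1,s),p}\Lambda^k(f)}{\omega}$, reproducing the two-case structure of the definition~\eqref{eq:def.Wspmax.norm}.

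For \eqref{eq:discrete.exterior.derivative:polycons.smooth}, the template is analogous but relies on consistency on $\Poly{r+1}{-}\Lambda^k(f)$ via~\eqref{eq:discrete.exterior.derivative:polycons}. The subtle point is choosing $\pi\omega \in \Poly{r+1}{-}\Lambda^k(f)$ so that $\rd\pi\omega$ realises an optimal polynomial approximation of $\rd\omega$ in $\Poly{r}{}\Lambda^{k+1}(f)$, so that the resulting error involves only seminorms of $\rd\omega$. Since $\rd\omega$ is closed and the polynomial de Rham complex is exact, a Bramble--Hilbert approximation of $\rd\omega$ can be lifted to a primitive in $\Koly{r+1}{k}(f) \subset \Poly{r+1}{-}\Lambda^k(f)$ through the isomorphism \eqref{eq:isomorphism.Prtrimmed.koszul}; this primitive is the $\pi\omega$ we need. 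Writing $\rd_{r,f}^k\ul I_{r,f}^k\omega - \rd\omega = \rd_{r,f}^k\ul I_{r,f}^k(\omega-\pi\omega) - \rd(\omega-\pi\omega)$ and controlling the first term by inverse inequality plus the analogue of the $L^p$-stability estimate for $\rd_{r,f}^k$, and the second by Bramble--Hilbert on $\rd\omega$, closes the argument. The main technical obstacle throughout is the $L^p$-stability bound and its inductive derivation over cell dimension, where every scaling factor must be tracked along the recursion~\eqref{eq:discrete.exterior.derivative:ddr}--\eqref{eq:discrete.potential}; the rest of the argument is then essentially bookkeeping.
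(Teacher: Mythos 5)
Your argument for \eqref{eq:discrete.potential:polynomial.consistency.smooth} follows essentially the same route as the paper: insert a polynomial approximant, use the polynomial consistency \eqref{eq:discrete.potential:polynomial.consistency} to rewrite the error as $\Pot{r}{f}{k}\ul I_{r,f}^k(\omega-\pi\omega)-(\omega-\pi\omega)$, and combine a stability bound for $\Pot{r}{f}{k}\circ\ul I_{r,f}^k$ (requiring $sp>d$ for the traces to make sense) with inverse and approximation estimates. The paper takes $\pi\omega=\lproj{r}{f}{k}\omega$ and cites the boundedness of $\Pot{r}{f}{k}\ul I_{r,f}^k$ from a companion work rather than re-deriving it by induction on the cell dimension, but this is a difference of bookkeeping, not of substance.

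For \eqref{eq:discrete.exterior.derivative:polycons.smooth}, however, your route has a genuine gap, in fact two. First, the lifting step is not justified as stated: a Bramble--Hilbert approximant of $\rd\omega$ (averaged Taylor polynomial or $L^2$-projection onto $\Poly{r}{}\Lambda^{k+1}(f)$) is in general \emph{not} a closed form, so exactness of the polynomial de Rham complex does not let you pull it back to a primitive in $\Koly{r+1}{k}(f)$; you would first have to produce an approximant lying in $\rd\Poly{r+1}{}\Lambda^k(f)$ with optimal rates, which needs a separate argument (e.g.\ an averaged Taylor polynomial of $\omega$ itself, using that it commutes with differentiation). Second, and more seriously, even with a valid $\pi\omega$, your splitting $\rd_{r,f}^k\ul I_{r,f}^k(\omega-\pi\omega)-\rd(\omega-\pi\omega)$ requires a stability bound for the first term; but $\ul I_{r,f}^k(\omega-\pi\omega)$ only sees the \emph{traces} of $\omega-\pi\omega$ on subcells, so any direct stability estimate controls $\rd_{r,f}^k\ul I_{r,f}^k(\omega-\pi\omega)$ by norms of $\omega-\pi\omega$, not of $\rd(\omega-\pi\omega)$. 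Since $\omega-\pi\omega$ can be large even when $\rd\omega$ is small (add any closed non-polynomial form to $\omega$), this cannot yield a bound involving only $\seminorm{W^{(r+1,s),p}\Lambda^{k+1}(f)}{\rd\omega}$ as in the statement. The paper avoids both issues in one line: by the link \eqref{eq:link.pot.diff} (with $k+1$ in place of $k$) and the commutation property \eqref{eq:commut.d}, $\rd_{r,f}^k\ul I_{r,f}^k\omega=\Pot{r}{f}{k+1}\ud_{r,f}^k\ul I_{r,f}^k\omega=\Pot{r}{f}{k+1}\ul I_{r,f}^{k+1}(\rd\omega)$, so \eqref{eq:discrete.exterior.derivative:polycons.smooth} is exactly \eqref{eq:discrete.potential:polynomial.consistency.smooth} applied to $\rd\omega$ with form degree $k+1$. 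You should replace your second argument by this reduction.
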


\begin{proof}
  See Section~\ref{sec:complex:polynomial.consistency:ddr}.
\end{proof}


\subsubsection{Discrete $L^2$-products}

Using the potentials built in Section \ref{sec:ddr:potentials.differentials}, we can define, for all $k\in[0,n]$, an inner product $(\cdot,\cdot)_{k,h}:\ul X_{r,h}^k\times\ul X_{r,h}^k\to\Real$ that induces an $L^2$-structure on $\ul X_{r,h}^k$.
Specifically, we set:
For all $(\ul\omega_h,\ul\mu_h)\in\ul X_{r,h}^k\times\ul X_{r,h}^k$,
\begin{equation}\label{eq:l2prod.X.h}
  \begin{gathered}
    (\ul\omega_h,\ul\mu_h)_{k,h}
    \coloneq\sum_{f\in\Delta_n(\Mh)}(\ul\omega_f,\ul\mu_f)_{k,f}
    \\
    \text{
      with
      $(\ul\omega_f,\ul\mu_f)_{k,f}\coloneq
      \int_f \Pot{r}{f}{k}\ul\omega_f\wedge\star \Pot{r}{f}{k}\ul\mu_f
      + s_{k,f}(\ul\omega_f,\ul\mu_f)$
      for all $f\in\Delta_n(\Mh)$,
    }
  \end{gathered}
\end{equation}
where $s_{k,f}:\ul X_{r,f}^k\times\ul X_{r,f}^k\to\Real$ is the stabilisation bilinear form such that
\begin{multline*}
  s_{k,f}(\ul\omega_f,\ul\mu_f)
  \\
  =
  \sum_{d'=k}^{n-1} h_f^{n-d'}\sum_{f'\in\Delta_{d'}(f)}\int_{f'} (\tr_{f'}\Pot{r}{f}{k}\ul\omega_f - \Pot{r}{f'}{k}\ul\omega_{f'})
  \wedge\star(\tr_{f'}\Pot{r}{f}{k}\ul\mu_f - \Pot{r}{f'}{k}\ul\mu_{f'}),
\end{multline*}
with $h_f$ denoting the diameter of $f$.
The first term in the right-hand side of $(\cdot,\cdot)_{k,f}$ is responsible for consistency, while the second one ensures the positivity and definiteness of this bilinear form (required for $(\cdot,\cdot)_{k,h}$ to define an inner product).
More specifically, by Theorem~\ref{thm:polynomial.consistency:ddr} and Remark~\ref{rem:discrete.potential:trace.consistency} it holds, for all $f\in\Delta_n(\Mh)$,
\begin{equation}\label{eq:consistency:L2product}
  (\ul I_{r,f}^k\omega,\ul\mu_f)_{k,f} =
  \int_f\omega\wedge\star \Pot{r}{f}{k}\ul\mu_f\qquad
  \forall \omega\in\Poly{r}{}\Lambda^k(f)\,,\;\forall\ul\mu_f\in\ul X_{r,f}^k.
\end{equation}
Additionally, by \eqref{eq:proj.potential} below, the mapping $\ul X_{r,f}^k\ni\ul\omega_f\mapsto\|\ul\omega_f\|_{k,f}\coloneq(\ul\omega_f,\ul\omega_f)_{k,f}^{\nicefrac12}\in\Real$ defines a norm on $\ul X_{r,f}^k$.
Numerical schemes for linear PDEs related to the de Rham complex are typically obtained replacing continuous spaces and $L^2$-products with their discrete counterparts, according to the principles illustrated in the next section; see also \cite[Section~7]{Di-Pietro.Droniou:23*1}.

\begin{remark}[Stabilisation]
  A more general expression for the local $L^2$-product in \eqref{eq:l2prod.X.h} is obtained replacing $s_{k,f}$ with
  \[
  s_{\mathcal{B},k,f}(\ul\omega_f,\ul\mu_f)
  = \mathcal{B}_f(\ul I_{r,f}^k \Pot{r}{f}{k}\ul\omega_f - \ul\omega_f, \ul I_{r,f}^k \Pot{r}{f}{k}\ul\mu_f - \ul\mu_f),
  \]
  with $\mathcal{B}_f:\ul X_{r,f}^k\times\ul X_{r,f}^k\to\Real$ denoting a symmetric positive definite bilinear form inducing a norm that scales in $h_f$ as $\|{\cdot}\|_{k,f}$ defined above.
  Crucially, $s_{\mathcal{B},k,f}$ depends on its arguments only through the difference operator $\ul X_{r,f}^k\ni\ul\omega_f\mapsto\ul I_{r,f}^k \Pot{r}{f}{k}\ul\omega_f - \ul\omega_f\in\ul X_{r,f}^k$, which guarantees that it vanishes whenever one of its arguments is the interpolate of a differential form in $\Poly{r}{}\Lambda^k(f)$ as a result of \eqref{eq:discrete.potential:polynomial.consistency}.
\end{remark}

The stabilisation bilinear form not only vanishes on interpolate of polynomials, it also enjoys some consistency property on interpolate of smooth forms. Extending the notation for Sobolev spaces, the $H^{(r+1,s)}$-seminorm corresponds to the $W^{(r+1,s),2}$-seminorm.

\begin{lemma}[Consistency of the stabilisation bilinear form]\label{lem:consistency.stabilisation}
  Under the assumptions on $k$, $d$, and $f$ in Corollary \ref{cor:polynomial.consistency.smooth:ddr}, for all integer $s$ such that $2s>d$ there exists $C>0$ depending only on $\theta$, $d$, $k$, $s$ and $r$ such that
  \begin{equation}\label{eq:consistency.stabilisation}
    s_{k,f}(\ul{I}_{r,f}^k\omega,\ul{I}_{r,f}^k\omega)^{\nicefrac12}
    \le C h_f^{r+1}\seminorm{H^{(r+1,s)}\Lambda^{k}(f)}{\omega}
    \qquad\forall \omega\in H^{\max(r+1,s)}\Lambda^k(f).
  \end{equation}
\end{lemma}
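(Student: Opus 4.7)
The plan is to control the integrand of $s_{k,f}$ on each subcell $f'$ by inserting the continuous form $\tr_{f'}\omega$ between the two discrete potentials, thereby producing two error terms that can both be handled by Corollary~\ref{cor:polynomial.consistency.smooth:ddr}---one on $f$ and one on $f'$. Noting that $\ul I_{r,f'}^k\omega = \ul I_{r,f'}^k(\tr_{f'}\omega)$ (the interpolator on $f'$ only sees $\omega$ through its trace), a triangle inequality gives
\[
\|\tr_{f'}\Pot{r}{f}{k}\ul I_{r,f}^k\omega - \Pot{r}{f'}{k}\ul I_{r,f'}^k\omega\|_{L^2\Lambda^k(f')} \le T_1(f') + T_2(f'),
\]
with $T_1(f')\coloneq\|\tr_{f'}(\Pot{r}{f}{k}\ul I_{r,f}^k\omega - \omega)\|_{L^2\Lambda^k(f')}$ and $T_2(f')\coloneq\|\Pot{r}{f'}{k}\ul I_{r,f'}^k(\tr_{f'}\omega) - \tr_{f'}\omega\|_{L^2\Lambda^k(f')}$. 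The target estimate \eqref{eq:consistency.stabilisation} then follows once I prove $h_f^{n-d'}\bigl(T_1(f')^2 + T_2(f')^2\bigr) \le C h_f^{2(r+1)}\seminorm{H^{(r+1,s)}\Lambda^k(f)}{\omega}^2$, since the number of subcells is bounded by mesh regularity.

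For $T_1(f')$, I would invoke a scaled continuous trace inequality from $f$ to $f'$ (derived via a pullback to a reference configuration, justified by the star-shaped-sets hypothesis inherited from Corollary~\ref{cor:polynomial.consistency.smooth:ddr}), of the form
\[
T_1(f')^2 \le C h_f^{d'-n}\sum_{j=0}^{s} h_f^{2j}\,\seminorm{H^j\Lambda^k(f)}{\Pot{r}{f}{k}\ul I_{r,f}^k\omega - \omega}^2.
\]
Each summand is then controlled by $h_f^{j}\seminorm{H^j\Lambda^k(f)}{\Pot{r}{f}{k}\ul I_{r,f}^k\omega - \omega}\le C h_f^{r+1}\seminorm{H^{(r+1,s)}\Lambda^k(f)}{\omega}$: for $j\le r+1$ this is exactly Corollary~\ref{cor:polynomial.consistency.smooth:ddr}, whereas for $j > r+1$ the $H^j$-seminorm of the polynomial $\Pot{r}{f}{k}\ul I_{r,f}^k\omega$ vanishes and one absorbs the remaining $h_f^{j-r-1}\seminorm{H^j\Lambda^k(f)}{\omega}$ into $\seminorm{H^{(r+1,s)}\Lambda^k(f)}{\omega}$ using the definition~\eqref{eq:def.Wspmax.norm}. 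Multiplying by $h_f^{n-d'}$ cancels the $h_f^{d'-n}$ prefactor and delivers the desired rate.

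For $T_2(f')$, the assumption $2s > d \ge d'$ allows me to apply Corollary~\ref{cor:polynomial.consistency.smooth:ddr} on $f'$ itself, giving $T_2(f') \le C h_{f'}^{r+1}\seminorm{H^{(r+1,s)}\Lambda^k(f')}{\tr_{f'}\omega}$. Each term $\seminorm{H^t\Lambda^k(f')}{\tr_{f'}\omega}^2$ for $t \in \{r+1,\ldots,s\}$ is then handled by applying a scaled continuous trace inequality to the higher-order derivatives of $\omega$, yielding, after using $h_{f'}\le h_f$, an estimate of the form $\seminorm{H^{(r+1,s)}\Lambda^k(f')}{\tr_{f'}\omega}^2 \le C h_f^{d'-n}\seminorm{H^{(r+1,s)}\Lambda^k(f)}{\omega}^2$. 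Multiplying by $h_f^{n-d'}$ again produces the sharp $h_f^{2(r+1)}$ factor.

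The main obstacle, in my view, is not any individual estimate but the careful bookkeeping with the modified seminorm~\eqref{eq:def.Wspmax.norm}: the case split $j\le r+1$ versus $j>r+1$ in $T_1$, and the symmetric transfer of the $H^{(r+1,s)}$-seminorm from $f'$ back to $f$ in $T_2$, must be carried out in tandem so that every factor of $h_f$ is traced and no logarithmic losses appear. A secondary technical point is guaranteeing that the scaled trace inequalities hold uniformly on polytopes of codimension $n-d'$, which is where the star-shaped-sets assumption is essential---especially when $n-d' \ge 2$, so that the cascade of intermediate traces through subcells of increasing codimension remains well-defined and quantitatively controlled.
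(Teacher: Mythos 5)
Your overall strategy (insert $\tr_{f'}\omega$ between the two potentials, then use the consistency of Corollary \ref{cor:polynomial.consistency.smooth:ddr} once on $f$ and once on $f'$) is a genuinely different route from the paper's, and the $T_1$ part of it is sound: there you only ever need $L^2(f')$-norms of traces of $\Pot{r}{f}{k}\ul I_{r,f}^k\omega-\omega$, and the iterated trace inequality with the sum running up to order $s$ (with $2s>d$ guaranteeing continuity, hence well-defined traces on subcells of any codimension) combined with the case split $j\le r+1$ versus $j>r+1$ does deliver the rate $h_f^{r+1}$.

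The $T_2$ part, however, has a genuine gap. Applying Corollary \ref{cor:polynomial.consistency.smooth:ddr} on $f'$ requires $\tr_{f'}\omega\in H^{\max(r+1,s)}\Lambda^k(f')$ and produces the right-hand side $h_{f'}^{r+1}\seminorm{H^{(r+1,s)}\Lambda^k(f')}{\tr_{f'}\omega}$, i.e., Sobolev seminorms of the \emph{trace} on $f'$ of order up to $\max(r+1,s)$. These cannot be controlled by seminorms of $\omega$ on $f$ of the same order: the trace onto a cell of codimension $c=n-d'$ loses roughly $c/2$ derivatives, so bounding $\seminorm{H^t\Lambda^k(f')}{\tr_{f'}\omega}$ by a scaled trace inequality requires $\omega\in H^{t+\sigma}(f)$ with $2\sigma>n-d'$; for $t=\max(r+1,s)$ this strictly exceeds the assumed regularity (already for codimension $1$), and for the top order $\tr_{f'}\omega$ need not even belong to $H^{\max(r+1,s)}(f')$, so the Corollary cannot be invoked on $f'$ as stated. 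The repair is to first use the fact that $s_{k,f}$ vanishes on interpolates of polynomials (a consequence of Theorem \ref{thm:polynomial.consistency:ddr} and Remark \ref{rem:discrete.potential:trace.consistency}) to replace $\omega$ by $\omega-\lproj{r}{f}{k}\omega$ throughout; then both terms reduce to $L^2(f')$-bounds on traces of $\omega-\lproj{r}{f}{k}\omega$ and on the (bounded) potential reconstructions of its interpolate, and one concludes with the approximation properties of $\lproj{r}{f}{k}$. This is exactly the paper's proof: it writes $s_{k,f}(\ul I_{r,f}^k\omega,\ul I_{r,f}^k\omega)=s_{k,f}(\ul I_{r,f}^k(\omega-\lproj{r}{f}{k}\omega),\ul I_{r,f}^k(\omega-\lproj{r}{f}{k}\omega))$, invokes the boundedness of the stabilisation seminorm on interpolates (the cited Lemmas 10--11 of \cite{Di-Pietro.Droniou.ea:23*1}, which only involve $\sum_{t=0}^s h_f^t\seminorm{H^t\Lambda^k(f)}{\cdot}$ on $f$ itself), and finishes with the polynomial approximation estimates --- thereby never having to estimate higher-order seminorms on lower-dimensional cells.
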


\begin{proof}
  See Section~\ref{sec:complex:polynomial.consistency:ddr}.
\end{proof}


\subsection{Application to the Hodge Laplacian}

In this section we write a DDR scheme for the Hodge Laplacian and use this model problem to showcase the relevant properties for the analysis.

\subsubsection{DDR scheme}

In view of Remark \ref{rem:non-trivial.topology} below, to keep the exposition as simple as possible, we assume that $\Omega\subset\Real^n$ has trivial topology, so that the spaces of harmonic forms are trivial.
Given a form degree $k \ge 0$ and a form $g \in \Lambda^k(\Omega)$ smooth enough, we focus on the following mixed formulation:
Find $(\sigma,u) \in H\Lambda^{k-1}(\Omega) \times H\Lambda^k(\Omega)$ such that
\[
\begin{alignedat}{2}
  \langle\sigma,\tau\rangle_{k-1} - \langle u, \rd^{k-1} \tau\rangle_k &= 0
  &\qquad& \forall\tau\in H\Lambda^{k-1}(\Omega),
  \\
  \langle \rd^{k-1} \sigma, v\rangle_k + \langle \rd^k u, \rd^k v\rangle_{k+1} &= \langle g, v\rangle_k
  &\qquad& \forall v \in H\Lambda^k(\Omega),
\end{alignedat}
\]
where $\langle\omega,\mu\rangle_{\ell}\coloneq\int_\Omega\omega\wedge\star\mu$ denotes the $L^2$-product of $\ell$-forms.
Let a polynomial degree $r \ge 0$ be fixed.
Assuming $g$ smooth enough for $\underline{I}_{r,h}^k g$ to be well-defined, the DDR scheme is obtained with obvious substitutions, and reads:
Find $(\underline{\sigma}_h,\underline{u}_h) \in \underline{X}_{r,h}^{k-1} \times \underline{X}_{r,h}^k$ such that
\[
\begin{alignedat}{2}
  (\underline{\sigma}_h,\underline{\tau}_h)_{k-1,h} - (\underline{u}_h, \ud_{r,h}^{k-1} \underline{\tau}_h)_{k,h} &= 0
  &\qquad& \forall \underline{\tau}_h \in \underline{X}_{r,h}^{k-1},
  \\
  (\ud_{r,h}^{k-1} \underline{\sigma}_h, \underline{v}_h )_{k,h} + ( \ud_{r,h}^k \underline{u}_h, \ud_{r,h}^k \underline{v}_h)_{k+1,h} &= (\underline{I}_{r,h}^k g, \underline{v}_h )_{k,h}
  &\qquad& \forall \underline{v}_h \in \underline{X}_{r,h}^k,
\end{alignedat}
\]
or, equivalently,
\begin{equation}\label{eq:discrete}
  \mathcal{A}_h((\underline{\sigma}_h,\underline{u}_h),(\underline{\tau}_h,\underline{v}_h))
  = (\underline{I}_{r,h}^k g,\underline{v}_h)_{k,h}
  \qquad\forall (\underline{\tau}_h,\underline{v}_h)\in\underline{X}_{r,h}^{k-1}\times\underline{X}_{r,h}^k,
\end{equation}
where the bilinear form $\mathcal{A}_h : \big[ \underline{X}_{r,h}^{k-1}\times\underline{X}_{r,h}^k \big]^2 \to \Real$ is such that
\begin{multline}\label{eq:Ah}
  \mathcal{A}_h((\underline{\sigma}_h,\underline{u}_h),(\underline{\tau}_h,\underline{v}_h))
  \\
  \coloneq (\underline{\sigma}_h,\underline{\tau}_h)_{k-1,h}
  - (\underline{u}_h, \ud_{r,h}^{k-1} \underline{\tau}_h)_{k,h}
  + (\ud_{r,h}^{k-1} \underline{\sigma}_h, \underline{v}_h )_{k,h}
  + ( \ud_{r,h}^k \underline{u}_h, \ud_{r,h}^k \underline{v}_h)_{k+1,h}.
\end{multline}

\begin{remark}[Regularity requirement on $g$]
  The regularity requirements on $g$ can be lowered replacing $(\underline{I}_{r,h}^k g,\underline{v}_h)_{k,h}$ with  $\sum_{f \in \Delta_n(\Mh)} \int_f g \wedge \star\Pot{r}{f}{k} \underline{v}_f$.
  The changes to the following discussion are straightforward, and we leave them to the reader.
\end{remark}

\begin{remark}[Extension to domains with non-trivial topology]\label{rem:non-trivial.topology}
  The extension to domains with non-trivial topology requires to additionally enforce the $L^2$-orthogonality of $u$ to harmonic forms.
  The discrete space of harmonic forms is non-conforming (i.e.~it is not a subspace of the continuous harmonic forms) also when conforming finite element approximations of $H\Lambda^{k-1}(\Omega)$ and $H\Lambda^k(\Omega)$ are used.
  We therefore refer to \cite{Arnold:18} for further discussion on this subject.
\end{remark}

\subsubsection{Existence and uniqueness of a discrete solution and stability analysis}

We equip the product space $\underline{X}_{r,h}^{k-1}\times\underline{X}_{r,h}^k$ with the following norm:
\[
\tnorm{h}{(\underline{\tau}_h,\underline{v}_h)}
\coloneq\left(
\tnorm{k-1,h}{\underline{\tau}_h}^2
+ \tnorm{k,h}{\underline{v}_h}^2
\right)^{\nicefrac12},
\]
where, for all $\ell \ge 0$ and all $\underline{\omega}_h \in \underline{X}_{r,h}^\ell$,
\[
\text{%
  $\tnorm{\ell,h}{\underline{\omega}_h} \coloneq \big( \norm{\ell}{\underline{\omega}_h}^2 + \norm{\ell+1}{\ud_{r,h}^\ell\underline{\omega}_h}^2 \big)^{\nicefrac12}$
  with $\norm{\ell}{{\cdot}}\coloneq (\cdot,\cdot)_{\ell,h}^{\nicefrac12}$.
}
\]
From here on, we will assume that the mesh satisfies regularity properties generalising to the dimension $d$ those of \cite[Definition 1.9]{Di-Pietro.Droniou:20} and use $a \lesssim b$ as a shortcut for $a \le C b$ with $C > 0$ independent of the meshsize (dependencies will be specified more precisely when needed).
The following Poincar\'e-type result has been proved for $n=3$ in \cite{Di-Pietro.Hanot:23} using vector proxies:
For all  form degrees $\ell \ge 0$ and all $\underline{\omega}_h \in \underline{X}_{r,h}^\ell$, there exists $\underline{\mu}_h \in \underline{X}_{r,h}^\ell$ such that
\begin{equation}\label{eq:poincare}
  \text{%
    $\ud_{r,h}^\ell\underline{\mu}_h = \ud_{r,h}^\ell\underline{\omega}_h$
    and $\norm{\ell}{\underline{\mu}_h} \lesssim \norm{\ell+1}{\ud_{r,h}^\ell\underline{\omega}_h}$.
  }
\end{equation}
Based on this relation, we can prove the following inf-sup condition on $\mathcal{A}_h$ proceeding along the lines of \cite[Section 5]{Di-Pietro.Hanot:23}:
For all $(\underline{\upsilon}_h,\underline{w}_h) \in \underline{X}_{r,h}^{k-1} \times \underline{X}_{r,h}^k$,
\begin{equation}\label{eq:inf-sup}
  \tnorm{h}{(\underline{\upsilon}_h,\underline{w}_h)}
  \lesssim
  \sup_{(\underline{\tau}_h,\underline{v}_h) \in \underline{X}_{r,h}^{k-1} \times \underline{X}_{r,h}^k \setminus \{ 0 \}}
  \frac{\mathcal{A}_h((\underline{\upsilon}_h,\underline{w}_h),(\underline{\tau}_h,\underline{v}_h))}{\tnorm{h}{(\underline{\tau}_h,\underline{v}_h)}}.
\end{equation}

\subsubsection{Convergence analysis}

For $\ell \ge 0$, we define the global potential reconstruction $\Pot{r}{h}{\ell} : \underline{X}_{r,h}^\ell \to L^2\Lambda^\ell(\Omega)$ and discrete differential $\rd_{r,h}^\ell : \underline{X}_{r,h}^\ell \to L^2\Lambda^{\ell+1}(\Omega)$ obtained patching the corresponding local counterparts on the mesh $n$-faces:
For all $\underline{\omega}_h \in \underline{X}_{r,h}^\ell$,
\[
\text{
  $(\Pot{r}{h}{\ell} \underline{\omega}_h)_{|f}
  \coloneq \Pot{r}{f}{\ell} \underline{\omega}_f$
  and $(\rd_{r,h}^\ell \underline{\omega}_h)_{|f}
  \coloneq \rd_{r,f}^\ell \underline{\omega}_f$
  for all $f \in \Delta_n(\Mh)$.
}
\]
To establish the convergence of the solution to the discrete problem \eqref{eq:discrete}, we let, for the sake of brevity, $(\hud{\sigma}_h,\hud{u}_h) \coloneq (\underline{I}_{r,h}^{k-1} \sigma, \underline{I}_{r,h}^k u)$ (assuming $\sigma$ and $u$ smooth enough for the interpolation to be possible) and define the following errors:
\[
\begin{gathered}
  (\varepsilon_\pi, e_\pi)
  \coloneq(
  \sigma - \Pot{r}{h}{k-1}\hud{\sigma}_h,
  u - \Pot{r}{h}{k}\hud{u}_h
  ),
  \\
  (\varepsilon_{\rd,\pi}, e_{\rd,\pi})
  \coloneq
  (
  \rd^{k-1} \sigma - \rd_{r,h}^{k-1} \hud{\sigma}_h,
  \rd^k u - \rd_{r,h}^k \hud{u}_h
  ),
  \\
  (\underline{\varepsilon}_h, \underline{e}_h)
  \coloneq\left(
  \underline{\sigma}_h - \hud{\sigma}_h,
  \underline{u}_h - \hud{u}_h
  \right).
\end{gathered}
\]
The approximation properties stated in Corollary \ref{cor:polynomial.consistency.smooth:ddr} and Lemma \ref{lem:consistency.stabilisation} (with $m=0$, $p=2$, and $s=r+1$) and the commutation property \eqref{eq:commut.d} below yield the following estimate:
\begin{multline}\label{eq:e.pi}
  \norm{L^2\Lambda^{k-1}(\Omega)}{\varepsilon_\pi}
  + \norm{L^2\Lambda^k(\Omega)}{\varepsilon_{\rd,h}}
  + \norm{L^2\Lambda^k(\Omega)}{e_\pi}
  + \norm{L^2\Lambda^{k+1}(\Omega)}{e_{\rd,\pi}}
  \\
  + \seminorm{k-1,h}{\hud{\sigma}_h}
  + \seminorm{k,h}{\ud_{r,h}^{k-1}\hud{\sigma}_h}
  + \seminorm{k,h}{\hud{u}_h}
  + \seminorm{k+1,h}{\ud_{r,h}^{k-1}\hud{u}_h}
  \lesssim h^{r+1},
\end{multline}
where, for all $\ell \ge 0$, we introduced the stabilisation seminorm
\[
\seminorm{\ell,h}{{\cdot}} \coloneq s_{\ell,h}(\cdot,\cdot)^{\nicefrac12}.
\]
Notice that the terms involving this seminorm can be interpreted as a measure of the ``jumps'' between the traces of the potential reconstruction in the highest-dimensional cells and the potential reconstructions on their subcells.

Bounding the discrete components $(\underline{\varepsilon}_h, \underline{e}_h)$ of the error requires the following adjoint consistency results that, for $n=3$, follow from the analysis done in \cite[Section 6.2]{Di-Pietro.Droniou:23*1} using vector proxies.
Defining, for any form degree $\ell \ge 0$, any $\omega \in \Lambda^{\ell+1}(\Omega)$ smooth enough for the interpolators to make sense, and any $\underline{\mu}_h \in \underline{X}_{r,h}^\ell$,
\begin{equation}\label{eq:adjoint.error}
  \tilde{\mathcal{E}}_{r,h}^\ell(\omega;\underline{\mu}_h)
  \coloneq (\underline{I}_{r,h}^\ell\delta \omega, \underline{\mu}_h)_{\ell,h}
  - (\underline{I}_{r,h}^{\ell+1} \omega, \ud_{r,h}^\ell \underline{\mu}_h)_{\ell+1,h},
\end{equation}
(where $\delta$ is the co-differential) it holds, under additional piecewise regularity assumptions on $\omega$,
\begin{equation}\label{eq:adjoint.consistency}
  |\tilde{\mathcal{E}}_{r,h}^\ell(\omega;\underline{\mu}_h)|
  \lesssim h^{r+1}\tnorm{\ell,h}{\underline{\mu}_h}
  \qquad \forall \underline{\mu}_h \in \underline{X}_{r,h}^\ell.
\end{equation}
Then, letting, for all $(\underline{\tau}_h,\underline{\mu}_h) \in \underline{X}_{r,h}^{k-1} \times \underline{X}_{r,h}^k$,
\begin{equation}\label{eq:Eh}
  \mathcal{E}_h(\underline{\tau}_h,\underline{v}_h)
  \coloneq (\underline{I}_{r,h}^k g,\underline{v}_h)
  - \mathcal A_h((\hud{\sigma}_h,\hud{u}_h),(\underline{\tau}_h,\underline{v}_h)),
\end{equation}
we write
\begin{equation}\label{eq:est.err.h}
  \tnorm{h}{(\underline{\varepsilon}_h, \underline{e}_h)}
  \overset{\eqref{eq:inf-sup}}\lesssim
  \sup_{(\underline{\tau}_h,\underline{v}_h) \in \underline{X}_{r,h}^{k-1} \times \underline{X}_{r,h}^k \setminus \{ 0 \}}\frac{\mathcal{A}_h((\underline{\varepsilon}_h,\underline{e}_h),(\underline{\tau}_h,\underline{v}_h))}{\tnorm{h}{(\underline{\tau}_h,\underline{v}_h)}}
  \overset{\eqref{eq:discrete},\,\eqref{eq:Eh}}
  = \tnorm{h,*}{\mathcal{E}_h(\cdot,\cdot)}.
\end{equation}
with $\tnorm{h,*}{{\cdot}}$ denoting the norm dual to $\tnorm{h}{{\cdot}}$.
Recalling that $g = \rd \sigma + \delta\rd u$ almost everywhere and expanding the bilinear form $\mathcal A_h$ according to its definition \eqref{eq:Ah}, we next observe that, for all $(\underline{\tau}_h,\underline{\mu}_h) \in \underline{X}_{r,h}^{k-1} \times \underline{X}_{r,h}^k$, and provided $\rd \sigma$ and $\delta\rd u$ are smooth enough for their interpolation to make sense,
\begin{equation}\label{eq:est.Eh}
  \begin{aligned}
    \mathcal{E}_h(\underline{\tau}_h,\underline{\mu}_h)
    &= \cancel{(\underline{I}_{r,h}^k\rd\sigma, \underline{v}_h)_{k,h}}
    + (\underline{I}_{r,h}^k\delta \rd u, \underline{v}_h)_{k,h}
    \\
    &\quad
    - (\hud{\sigma}_h,\underline{\tau}_h)_{k-1,h}
    + (\hud{u}_h, \ud_{r,h}^{k-1} \underline{\tau}_h)_{k,h}
    \\
    &\quad
    - \cancel{(\ud_{r,h}^{k-1} \hud{\sigma}_h, \underline{v}_h )_{k,h}}
    - ( \ud_{r,h}^k \hud{u}_h, \ud_{r,h}^k \underline{v}_h)_{k+1,h}
    \\
    &=
    (\underline{I}_{r,h}^k\delta \rd u, \underline{v}_h)_{k,h}
    - ( \underline{I}_{r,h}^{k+1} \rd u, \ud_{r,h}^k \underline{v}_h)_{k+1,h}
    \\
    &\quad
    - (\underline{I}_{r,h}^{k-1} \delta u,\underline{\tau}_h)_{k-1,h}
    + (\underline{I}_{r,h}^k u, \ud_{r,h}^{k-1} \underline{\tau}_h)_{k,h}
    \\
    \overset{\eqref{eq:adjoint.error}}&=
    \tilde{\mathcal{E}}_{r,h}^k(\rd u;\underline{v}_h)
    - \tilde{\mathcal{E}}_{r,h}^{k-1}(u;\underline{\tau}_h),
  \end{aligned}
\end{equation}
where the cancellation in the first step is a consequence of the commutation property \eqref{eq:commut.d} below,
while, in the second step, we have used again this commutation property to write $\ud_{r,h}^k \hud{u}_h = \underline{I}_{r,h}^{k+1} \rd u$ and $\hud{\sigma}_h = \underline{I}_{r,h}^{k-1} \sigma = \underline{I}_{r,h}^{k-1} \delta u$.
Combining \eqref{eq:est.err.h} and \eqref{eq:est.Eh}, and using the adjoint consistency \eqref{eq:adjoint.consistency}, we finally get
\begin{equation}\label{eq:e.h}
  \tnorm{h}{(\underline{\varepsilon}_h, \underline{e}_h)} \lesssim h^{r+1}.
\end{equation}
Finally, using triangle inequalities, invoking \eqref{eq:e.pi} and \eqref{eq:e.h}, and using the $\tnorm{h}{{\cdot}}$-boundedness of the $L^2$-norm of the potential (resulting from the defintion of this norm), we arrive at the following error estimate:
\begin{multline*}
  \norm{L^2\Lambda^{k-1}(\Omega)}{\sigma - \Pot{r}{h}{k-1}\underline{\sigma}_h}
  + \norm{L^2\Lambda^k(\Omega)}{\rd \sigma - \rd_{r,h}^{k-1}\underline{\sigma}_h}
  \\
  + \norm{L^2\Lambda^k(\Omega)}{u - \Pot{r}{h}{k}\underline{u}_h}
  + \norm{L^2\Lambda^{k+1}(\Omega)}{\rd u - \rd_{r,h}^k\underline{u}_h}
  \\
  + \seminorm{k-1,h}{\underline{\sigma}_h}
  + \seminorm{k,h}{\ud_{r,h}^{k-1}\underline{\sigma}_h}
  + \seminorm{k,h}{\underline{u}_h}
  + \seminorm{k+1,h}{\ud_{r,h}^{k-1}\underline{u}_h}
  \lesssim h^{r+1}.
\end{multline*}
Notice that, as in the finite element framework, improved error estimates for certain components of the above error can be obtained using the Aubin--Nitsche trick, which has been extended to the fully discrete in \cite[Section~2.3]{Di-Pietro.Droniou:18}.
This topic will be explored in a future work.

\subsubsection{Summary of the relevant results}

In summary, to carry out the error analysis above we have used the following relevant results:
\begin{itemize}
\item The \emph{isomorphism in cohomology} between the discrete and continuous de Rham complexes stated in Theorem \ref{thm:cohomology:ddr} below to infer the existence and uniqueness of a solution to the discrete problem \eqref{eq:discrete};
\item The \emph{uniform Poincar\'e-type inequalities} \eqref{eq:poincare} to prove an inf-sup condition on the bilinear form $\mathcal{A}_h$. Such inequalities have been proved for $n=3$ in \cite{Di-Pietro.Hanot:23} using vector proxies, with arguments that lend themselves to an adaptation to the framework of differential forms;
\item The approximation properties stated in Corollary \ref{cor:polynomial.consistency.smooth:ddr} and Lemma \ref{lem:consistency.stabilisation}, consequences of the \emph{polynomial consistency} properties of Theorem \ref{thm:polynomial.consistency:ddr};
\item The \emph{adjoint consistency} estimates \eqref{eq:adjoint.consistency} to bound the discrete components of the error.
  Adjoint consistency results have been proved for $n=3$ in \cite[Section 6.2]{Di-Pietro.Droniou:23*1} using vector proxies.
  The proofs therein, however, use different arguments for each form degree.
  Devising a unified proof valid for all form degrees is still an open problem, which we leave for a future work.
\end{itemize}


\subsection{Complex property}\label{sec:ddr.complex}

We denote by $\Lambda^{k+1}(\partial f)$ the space $\bigtimes_{f'\in \Delta_{d-1}(f)} \Lambda^{k+1}(f')$, which can be intuitively understood as a space of piecewise $(k+1)$-forms on $\partial f$.
For all integers $d\in[k+2,n]$, the piecewise polynomial boundary exterior derivative $\rd^k_{r,\partial f} :\ul X_{r,\partial f}^k\to\Lambda^{k+1}(\partial f)$ is defined such that $(\rd^k_{r,\partial f})_{|f'} \coloneq \rd^k_{r,f'}$ for all $f'\in\Delta_{d-1}(f)$ ($\rd^k_{r,f'}$ being the discrete exterior derivative on the $(d-1)$-cell $f'$ defined by \eqref{eq:discrete.exterior.derivative:ddr}).
The following lemma generalises the links between element gradients (resp., curls) and face gradients (resp., curls) proved in \cite[Propositions 1 and 4]{Di-Pietro.Droniou:23*1} using vector proxies.

\begin{lemma}[Link between discrete exterior derivatives on subcells]\label{lem:link.diff.subcells}
  It holds, for all $d\ge k+2$, all $f\in\Delta_d(\Mh)$, and all $\ul\omega_f\in\ul X_{r,f}^k$,
  \begin{equation}\label{eq:link.discrete.exterior.derivatives}
    \int_f \rd^k_{r,f}\ul\omega_f\wedge \rd\alpha=(-1)^{k+1}\int_{\partial f}\rd^k_{r,\partial f}\ul\omega_{\partial f}\wedge \tr_{\partial f}\alpha\qquad\forall \alpha\in \Poly{r+1}{-}\Lambda^{d-k-2}(f).
  \end{equation}
\end{lemma}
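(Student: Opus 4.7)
The plan is to chain the defining relations of the discrete exterior derivative on $f$ and of the discrete potentials on its $(d-1)$-subcells, using $\rd\rd=0$ and the $\partial\partial=0$ identity of the underlying CW-complex. First, I would test the definition \eqref{eq:discrete.exterior.derivative:ddr} of $\rd^k_{r,f}\ul\omega_f$ against the admissible form $\mu=\rd\alpha$, which lies in $\Poly{r}{}\Lambda^{d-k-1}(f)$ since $\alpha\in\Poly{r+1}{-}\Lambda^{d-k-2}(f)\subset\Poly{r+1}{}\Lambda^{d-k-2}(f)$. The volume term vanishes because $\rd\rd\alpha=0$, and commuting the trace with $\rd$ leaves
\[
\int_f\rd^k_{r,f}\ul\omega_f\wedge\rd\alpha = \int_{\partial f}\Pot{r}{\partial f}{k}\ul\omega_{\partial f}\wedge\rd\tr_{\partial f}\alpha.
\]

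Next, I would process each $f'\in\Delta_{d-1}(f)$ individually. By Lemma~\ref{lem:traces.trimmed}, $\tr_{f'}\alpha\in\Poly{r+1}{-}\Lambda^{d-k-2}(f')$ has polynomial degree $r+1$, so it is \emph{not} in general admissible as a test form in \eqref{eq:discrete.exterior.derivative:ddr} on $f'$. The crucial enabling ingredient is Remark~\ref{rem:discrete.potential:validity}, which permits trimmed polynomials of degree $r+1$ as test forms in the potential relation \eqref{eq:discrete.potential} on $f'$ (applicable because $d-1\ge k+1$). Taking $\mu=\tr_{f'}\alpha$ and $\nu=0$ in that extended relation and rearranging yields a local identity expressing $\int_{f'}\Pot{r}{f'}{k}\ul\omega_{f'}\wedge\rd\tr_{f'}\alpha$ as $(-1)^{k+1}\int_{f'}\rd^k_{r,f'}\ul\omega_{f'}\wedge\tr_{f'}\alpha$ minus a residual on $\partial f'$. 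Summing these identities over $f'$ with the relative orientations $\varepsilon_{ff'}$, the left-hand side recomposes into the boundary integral produced in the first step, and the main term on the right becomes $(-1)^{k+1}\int_{\partial f}\rd^k_{r,\partial f}\ul\omega_{\partial f}\wedge\tr_{\partial f}\alpha$.

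The delicate point, and the main obstacle, is to show that the residual double-boundary contribution vanishes. Expanding $\int_{\partial f'}=\sum_{f''\in\Delta_{d-2}(f')}\varepsilon_{f'f''}\int_{f''}$, the integrand on each $(d-2)$-subcell $f''$ simplifies to $\Pot{r}{f''}{k}\ul\omega_{f''}\wedge\tr_{f''}\alpha$: both $\Pot{r}{\partial f'}{k}\ul\omega_{\partial f'}|_{f''}$ and $\tr_{\partial f'}\tr_{f'}\alpha|_{f''}$ collapse to their intrinsic values on $f''$, independent of the intermediate $f'$. Grouping terms, the residual becomes
\[
\sum_{f''\in\Delta_{d-2}(f)}\Bigl(\sum_{f'\supset f''}\varepsilon_{ff'}\varepsilon_{f'f''}\Bigr)\int_{f''}\Pot{r}{f''}{k}\ul\omega_{f''}\wedge\tr_{f''}\alpha,
\]
and the bracketed combinatorial coefficient vanishes for every $f''$ by the $\partial\partial=0$ relation of the CW-complex underlying the mesh. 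Combining this cancellation with the identity of the first step produces exactly \eqref{eq:link.discrete.exterior.derivatives}.
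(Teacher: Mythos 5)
Your proposal is correct and follows essentially the same route as the paper's proof: test \eqref{eq:discrete.exterior.derivative:ddr} with $\mu=\rd\alpha$ so the volume term drops by $\rd\circ\rd=0$, then on each $(d-1)$-subcell invoke \eqref{eq:discrete.potential} with $(\mu,\nu)=(\tr_{f'}\alpha,0)$ via Remark~\ref{rem:discrete.potential:validity} and Lemma~\ref{lem:traces.trimmed}, and finally kill the double-boundary residual through the $\sum_{f'}\varepsilon_{f\!f'}\varepsilon_{f'\!f''}=0$ identity. All the key ingredients you identify (the extended test-space validity of the potential relation and the CW-complex cancellation) are exactly those used in the paper.
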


\begin{proof}
  Take $\mu=\rd\alpha\in\Poly{r}{}\Lambda^{d-k-1}(f)$ in \eqref{eq:discrete.exterior.derivative:ddr} and use $\rd\circ\rd=0$ and $\tr_{\partial f}\rd=\rd\tr_{\partial f}$ (since the trace is a pullback, it commutes with the exterior derivative) to get
  \begin{equation}\label{eq:link.1}
    \int_f \rd^k_{r,f}\ul\omega_f\wedge \rd\alpha
    = \int_{\partial f}P^k_{r,\partial f}\ul\omega_{\partial f}\wedge \rd\tr_{\partial f}\alpha.
  \end{equation}
  For each $f'\in\Delta_{d-1}(f)$ forming $\partial f$, by Lemma \ref{lem:traces.trimmed} we have $\tr_{f'}\alpha\in\Poly{r+1}{-}\Lambda^{d-k-2}(f')$ so, by \eqref{eq:discrete.potential} applied to $f'$ instead of $f$ with test function $(\mu,\nu) = (\tr_{f'}\alpha,0)$ (see Remark \ref{rem:discrete.potential:validity}), we have, additionally using the fact that $\tr_{\partial f'} (\tr_{f'} \alpha) = \tr_{\partial f'} \alpha$,
  \[
  (-1)^{k+1}\int_{f'}P^k_{r,f'}\ul\omega_{f'}\wedge \rd\tr_{f'}\alpha = \int_{f'}\rd^k_{r,f'}\ul\omega_{f'}\wedge \tr_{f'}\alpha
  -\int_{\partial f'}P^k_{r,\partial f'}\ul\omega_{\partial f'}\wedge \tr_{\partial f'}\alpha.
  \]
  Multiplying the first term in the right-hand side by the relative orientation $\varepsilon_{f\!f'}$ and summing over $f'\in\Delta_{d-1}(f)$, we have
  \[
  \sum_{f'\in\Delta_{d-1}(f)}\varepsilon_{f\!f'}\int_{f'}\rd^k_{r,f'}\ul\omega_{f'}\wedge \tr_{f'}\alpha \overset{\eqref{eq:integral.relative.orientation}}
  = \int_{\partial f}\rd^k_{r,\partial f}\ul\omega_{\partial f}\wedge \tr_{\partial f}\alpha.
  \]
  Proceeding similarly for the second term in the right-hand side, we have
  \[
  \begin{aligned}
    &\sum_{f'\in\Delta_{d-1}(f)} \varepsilon_{f\!f'}\int_{\partial f'}P^k_{r,\partial f'}\ul\omega_{\partial f'}\wedge\tr_{\partial f'}\alpha
    \\
    &\quad\begin{aligned}[t]
    \overset{\eqref{eq:integral.relative.orientation}}&=
    \sum_{f'\in\Delta_{d-1}(f)}\varepsilon_{f\!f'}\Bigg(\sum_{f^{\prime\prime}\in\Delta_{d-2}(f')}\varepsilon_{f'\!f^{\prime\prime}}\int_{f''}P^k_{r,f''}\ul\omega_{f''}\wedge\tr_{f''} \alpha \Bigg)
    \\
    &=
    \sum_{f^{\prime\prime}\in\Delta_{d-2}(f)}\Bigg(\underbrace{\sum_{f'\in\Delta_{d-1}(f)\text{ s.t. }f^{\prime\prime}\in\Delta_{d-2}(f')}\varepsilon_{f\!f'}\varepsilon_{f'\!f^{\prime\prime}}}_{=0}\Bigg)\int_{f^{\prime\prime}}P^k_{r,f''}\ul\omega_{f''}\wedge\tr_{f''} \alpha
    = 0.
    \end{aligned}
  \end{aligned}
  \]
  Thus, $\int_{\partial f}P^k_{r,\partial f}\ul\omega_{\partial f}\wedge \rd\tr_{\partial f}\alpha = (-1)^{k+1} \int_{\partial f}\rd^k_{r,\partial f}\ul\omega_{\partial f}\wedge \tr_{\partial f}\alpha$ which, plugged into \eqref{eq:link.1}, gives the desired result.
\end{proof}

\begin{theorem}[Link between discrete potentials and exterior derivatives, complex property]\label{thm:link.pot.diff}
  It holds, for all integers $k\in[1,n]$ and $d\ge k$, all $f\in\Delta_d(\Mh)$, and all $\ul\omega_f\in\ul X_{r,f}^{k-1}$,
  \begin{equation}\label{eq:link.pot.diff}
    P^k_{r,f}(\ud_{r,f}^{k-1}\ul\omega_f)=\rd_{r,f}^{k-1}\ul\omega_f,
  \end{equation}
  and, if $d\ge k+1$,
  \begin{equation}\label{eq:complex.prop}
    \rd^k_{r,f}(\ud_{r,f}^{k-1}\ul\omega_f)=0.
  \end{equation}
  As a consequence, the sequence \eqref{eq:ddr} defines a complex.
\end{theorem}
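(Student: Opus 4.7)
The plan is to establish \eqref{eq:link.pot.diff} and \eqref{eq:complex.prop} simultaneously by induction on the dimension $d$ of the cell $f$. The complex property of the sequence \eqref{eq:ddr} will then follow directly from \eqref{eq:complex.prop}, since by \eqref{eq:global.discrete.exterior.derivative} each component of $\ud_{r,h}^k\ud_{r,h}^{k-1}\ul\omega_h$ on a cell $f$ is the trimmed projection of $\star\rd_{r,f}^k\ud_{r,f}^{k-1}\ul\omega_f=0$. The base case $d=k$ of \eqref{eq:link.pot.diff} is immediate: the only component of $\ud_{r,f}^{k-1}\ul\omega_f$ lives on $f$ itself and equals $\trimproj{r}{f}{0}(\star\rd_{r,f}^{k-1}\ul\omega_f)=\star\rd_{r,f}^{k-1}\ul\omega_f$ since $\Poly{r}{-}\Lambda^0(f)=\Poly{r}{}\Lambda^0(f)$, so \eqref{eq:discrete.potential:d=k} together with \eqref{eq:inv.star} yields $\Pot{r}{f}{k}\ud_{r,f}^{k-1}\ul\omega_f=\rd_{r,f}^{k-1}\ul\omega_f$.

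For the inductive step at $d\ge k+1$, the engine will be the auxiliary identity
\begin{equation*}
  \int_f \rd_{r,f}^{k-1}\ul\omega_f \wedge \rd\mu
  = \int_{\partial f} \Pot{r}{\partial f}{k-1}\ul\omega_{\partial f} \wedge \rd\tr_{\partial f}\mu
  \qquad\forall\mu\in\Poly{r+1}{}\Lambda^{d-k-1}(f),
\end{equation*}
obtained by feeding $\rd\mu\in\Poly{r}{}\Lambda^{d-k}(f)$ into the defining relation \eqref{eq:discrete.exterior.derivative:ddr} for $\rd_{r,f}^{k-1}$ and using $\rd\circ\rd=0$ together with the commutation of $\tr$ and $\rd$. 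Writing $\ul\eta_f\coloneq\ud_{r,f}^{k-1}\ul\omega_f$, I would first prove \eqref{eq:complex.prop} at dimension $d$: plug a test $\mu\in\Poly{r}{}\Lambda^{d-k-1}(f)$ into \eqref{eq:discrete.exterior.derivative:ddr} for $\rd_{r,f}^k\ul\eta_f$, apply \eqref{eq:remove.projector} to discard the trimmed projector defining $\eta_f$ (legitimate since $\rd\mu\in\Poly{r}{-}\Lambda^{d-k}(f)$), invoke the auxiliary identity to turn the bulk term into a boundary integral, and use the induction hypothesis to replace $\Pot{r}{\partial f}{k}\ul\eta_{\partial f}$ by $\rd_{r,\partial f}^{k-1}\ul\omega_{\partial f}$ in the remaining boundary term. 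The two resulting boundary contributions are then related on each $f'\in\Delta_{d-1}(f)$ by the extension of \eqref{eq:discrete.potential} from Remark~\ref{rem:discrete.potential:validity} applied with $\tr_{f'}\mu\in\Poly{r+1}{-}\Lambda^{d-k-1}(f')$ (admissible by Lemma~\ref{lem:traces.trimmed}); summing with the relative orientations and invoking the telescoping identity $\sum_{f'}\varepsilon_{f\!f'}\varepsilon_{f'\!f''}=0$ already used in the proof of Lemma~\ref{lem:link.diff.subcells} to kill the $\Delta_{d-2}(f)$-contributions, the two boundary terms cancel exactly, so that $\int_f\rd_{r,f}^k\ul\eta_f\wedge\mu=0$ and hence $\rd_{r,f}^k\ul\eta_f=0$ by Riesz.

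Then \eqref{eq:link.pot.diff} at dimension $d$ will follow by checking that $\rd_{r,f}^{k-1}\ul\omega_f$ satisfies the relation characterising $\Pot{r}{f}{k}\ul\eta_f$ in \eqref{eq:discrete.potential} when tested against $(\mu,\nu)\in\Koly{r+1}{d-k-1}(f)\times\Koly{r}{d-k}(f)$. The $\nu$-part matches through \eqref{eq:remove.projector}; the $\mu$-part uses \eqref{eq:complex.prop} (just established) to kill $\int_f \rd_{r,f}^k\ul\eta_f\wedge\mu$, the induction hypothesis on $(d-1)$-cells, the auxiliary identity now with $\mu\in\Koly{r+1}{d-k-1}(f)\subset\Poly{r+1}{}\Lambda^{d-k-1}(f)$, and the same subcell application of extended \eqref{eq:discrete.potential} to reconcile $\int_{\partial f}\rd_{r,\partial f}^{k-1}\ul\omega_{\partial f}\wedge\tr_{\partial f}\mu$ with $\int_{\partial f}\Pot{r}{\partial f}{k-1}\ul\omega_{\partial f}\wedge\rd\tr_{\partial f}\mu$; the conclusion then comes from the isomorphism \eqref{eq:isomorphism.Pr.koszul}. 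The main technical obstacle throughout is the polynomial degree mismatch: the test forms $\mu\in\Koly{r+1}{d-k-1}(f)$ and their subcell traces have degree $r+1$, falling in $\Poly{r+1}{-}\Lambda^{d-k-1}(\cdot)$ rather than in the domain $\Poly{r}{}\Lambda^{d-k-1}(\cdot)$ of the defining relation \eqref{eq:discrete.exterior.derivative:ddr} of subcell differentials; the extension of \eqref{eq:discrete.potential} to trimmed test forms provided by Remark~\ref{rem:discrete.potential:validity}, combined with Lemma~\ref{lem:traces.trimmed}, is precisely what will make the recursion close.
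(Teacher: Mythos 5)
Your proposal is correct and follows essentially the same route as the paper's proof: induction with base case $d=k$, the complex property \eqref{eq:complex.prop} established first by testing the defining relation \eqref{eq:discrete.exterior.derivative:ddr} for $\rd_{r,f}^k$, removal of the trimmed projector via \eqref{eq:remove.projector}, the induction hypothesis to replace the boundary potentials, and the isomorphism \eqref{eq:isomorphism.Pr.koszul} to conclude \eqref{eq:link.pot.diff}. The only cosmetic differences are that you induct on $d$ rather than on $d-k$, and that you re-derive the content of Lemma \ref{lem:link.diff.subcells} inline (your auxiliary identity combined with the subcell application of the extended potential relation and the telescoping of relative orientations is exactly the proof of that lemma), whereas the paper simply invokes \eqref{eq:link.discrete.exterior.derivatives} with $k-1$ in place of $k$.
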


\begin{proof}
  The proof is done by induction on $\rho\coloneq d-k$.
  \smallskip

  If $\rho= 0$ (i.e., $d=k$), by the definitions \eqref{eq:discrete.potential:d=k} of the discrete potential and \eqref{eq:global.discrete.exterior.derivative} of the global discrete exterior derivative with $k-1$ instead of $k$, we have $P^k_{r,f}(\ud_{r,f}^{k-1}\ul\omega_f) = \star^{-1}(\star\rd_{r,f}^{k-1}\ul\omega_f)=\rd_{r,f}^{k-1}\ul\omega_f$ (notice that, in the first passage, we can omit the projector in front of $\star\rd_{r,f}^{k-1}\ul\omega_f$ since this quantity sits in $\Poly{r}{}\Lambda^0(f)=\Poly{r}{-}\Lambda^0(f)$, and is therefore left unchanged by $\trimproj{r}{f}{0}$).
  This proves \eqref{eq:link.pot.diff}, and the relation \eqref{eq:complex.prop} is irrelevant here since $d=k$.
  \smallskip

  Let us now assume that \eqref{eq:link.pot.diff} and \eqref{eq:complex.prop} hold for a given $\rho\ge 0$, and let us consider $d$ and $k$ such that $d - k = \rho + 1$.
  We start by considering \eqref{eq:complex.prop} (which we need to prove since $d\ge k+1$ in the present case).
  Let us take $f\in\Delta_d(\Mh)$. Applying \eqref{eq:discrete.exterior.derivative:ddr} with $\ud^{k-1}_{r,f}\ul\omega_f$ instead of $\ul\omega_f$ and a generic $\mu\in \Poly{r}{}\Lambda^{d-k-1}(f)$, we have, expanding the local discrete exterior derivative $\ud^{k-1}_{r,f}\ul\omega_f$ according to its definition (i.e., the restriction to $f$ of \eqref{eq:global.discrete.exterior.derivative} with $k-1$ instead of $k$),
  \begin{multline}\label{eq:complex.1}
    \int_f \rd^k_{r,f}(\ud^{k-1}_{r,f}\ul\omega_f)\wedge\mu={}
    (-1)^{k+1}\int_f \star^{-1}(\trimproj{r}{f}{d-k}(\star\rd^{k-1}_{r,f}\ul\omega_f))\wedge \rd\mu
    \\
    +\int_{\partial f}P^k_{r,\partial f}(\ud^{k-1}_{r,\partial f}\ul\omega_{\partial f})\wedge \tr_{\partial f}\mu.
  \end{multline}
  By the induction hypothesis, \eqref{eq:link.pot.diff} holds on each $f'\in\Delta_{d-1}(f)$ (since $(d - 1) - k = \rho$), and thus
  \begin{equation}\label{eq:induction}
    P^k_{r,\partial f}(\ud^{k-1}_{r,\partial f}\ul\omega_{\partial f})=\rd_{r,\partial f}^{k-1}\ul\omega_{\partial f}.
  \end{equation}
  Invoking then \eqref{eq:remove.projector} with $(\mathcal X,\omega,\mu)\gets(\Poly{r}{-}\Lambda^{d-k}(f),\rd^{k-1}_{r,f}\ul\omega_f,\rd\mu)$, noticing that $\rd\mu\in\Poly{r-1}{}\Lambda^{d-k}(f)\subset\Poly{r}{-}\Lambda^{d-k}(f)$ (by \eqref{eq:trimmed.between} with $\ell=d-k$) to handle the first term in the right-hand side of \eqref{eq:complex.1}, we infer
  \begin{equation}\label{eq:d.d.initial}
    \int_f \rd^k_{r,f}(\ud^{k-1}_{r,f}\ul\omega_f)\wedge\mu=(-1)^{k+1}\int_f \rd^{k-1}_{r,f}\ul\omega_f\wedge \rd\mu
    +\int_{\partial f} \rd^{k-1}_{r,\partial f}\ul\omega_{\partial f}\wedge \tr_{\partial f}\mu=0,
  \end{equation}
  where the conclusion follows from the link \eqref{eq:link.discrete.exterior.derivatives} between discrete exterior derivatives on subcells applied with $k-1$ instead of $k$ and $\alpha=\mu\in \Poly{r}{}\Lambda^{d-k-1}(f) \subset \Poly{r+1}{-}\Lambda^{d-(k-1)-2}(f)$.
  Since $\mu$ is arbitrary in $\Poly{r}{}\Lambda^{d-k-1}(f)$, \eqref{eq:d.d.initial} proves \eqref{eq:complex.prop}.

  We next prove \eqref{eq:link.pot.diff}.
  For any $(\mu,\nu)\in\Koly{r+1}{d-k-1}(f)\times\Koly{r}{d-k}(f)$, the definition \eqref{eq:discrete.potential} of the potential applied to $\ud_{r,f}^{k-1}\ul\omega_f$ gives
  \begin{multline*}
    (-1)^{k+1}\int_f P^k_{r,f}(\ud_{r,f}^{k-1}\ul\omega_f)\wedge(\rd\mu +\nu)
    =
    \int_f \rd_{r,f}^{k} (\ud_{r,f}^{k-1}\ul\omega_f) \wedge \mu
    \\
    -  \int_{\partial f}  \Pot{r}{\partial f}{k}(\ud_{r,\partial f}^{k-1}\ul\omega_{\partial f}) \wedge \tr_{\partial f}{\mu}
    + (-1)^{k+1}\int_f \star^{-1}\trimproj{r}{f}{d-k}(\star\rd_{r,f}^{k-1}\ul\omega_f) \wedge\nu,
  \end{multline*}
  where we have additionally used, in the last term, the definition of the local discrete exterior derivative $\ud_{r,f}^{k-1}\ul\omega_f$, corresponding to the restriction to $f$ of \eqref{eq:global.discrete.exterior.derivative} with $k-1$ instead of $k$.
  Using the complex property \eqref{eq:complex.prop} that we have just proved, we have $\rd_{r,f}^{k} (\ud_{r,f}^{k-1}\ul\omega_f)=0$.
  Moreover, the induction hypothesis \eqref{eq:induction} yields $\Pot{r}{\partial f}{k}(\ud_{r,\partial f}^{k-1}\ul\omega_{\partial f})=\rd_{r,\partial f}^{k-1}\ul\omega_{\partial f}$.
  Hence, invoking \eqref{eq:link.discrete.exterior.derivatives} with $k-1$ instead of $k$ and $\alpha = \mu$ (notice that $\mu\in\Koly{r+1}{d-k-1}(f)\subset \Poly{r+1}{-}\Lambda^{d-(k-1)-2}(f)$ by \eqref{def:trimmed.spaces}) and applying \eqref{eq:remove.projector} with $(\mathcal X,\omega,\mu)\gets(\Poly{r}{-}\Lambda^{d-k}(f),\rd^{k-1}_{r,f}\ul\omega_f,\nu)$, which is valid since $\nu\in\Koly{r}{d-k}(f)\subset\Poly{r}{-}\Lambda^{d-k}(f)$ by \eqref{eq:trimmed.spaces:ell>=1} with $\ell=d-k\ge 1$, we obtain
  \begin{multline*}
    (-1)^{k+1}\int_f P^k_{r,f}(\ud_{r,f}^{k-1}\ul\omega_f)\wedge(\rd\mu +\nu)
    \\
    =
    -  (-1)^k\int_f \rd_{r, f}^{k-1}\ul\omega_{f} \wedge \rd\mu + (-1)^{k+1}\int_f \rd_{r,f}^{k-1}\ul\omega_f \wedge\nu.
  \end{multline*}
  Simplifying by $(-1)^{k+1}$ and recalling the isomorphism \eqref{eq:isomorphism.Pr.koszul} concludes the proof of \eqref{eq:link.pot.diff}.
\end{proof}

\subsection{Commutation}

The following lemma shows that the reconstructed potential $\Pot{r}{f}{k}\ul\omega_f$ on a $d$-cell $f$ is built by adding a high-order enhancement to $\star^{-1}\omega_f$; this enhancement is designed to obtain a polynomial consistency unachievable by the component alone (see \eqref{eq:discrete.potential:polynomial.consistency}).

\begin{lemma}[Links between component and potential reconstruction]
  For all integers $d\in [0,n]$ and $k\le d$, if $f\in\Delta_d(\Mh)$ and $\ul\omega_f\in\ul X_{r,f}^k$, then it holds
  \begin{multline} \label{def:Pot.correction}
    (-1)^{k+1}\int_f \Pot{r}{f}{k} \ul\omega_f \wedge (\rd\mu+\nu)
    = (-1)^{k+1}\int_f \star^{-1}\omega_f \wedge(\rd (\lproj{r}{f}{d-k-1}\mu)+\nu)
    \\
    +\int_f \rd_{r,f}^{k} \ul\omega_f \wedge (\mu-\lproj{r}{f}{d-k-1}\mu)
    -  \int_{\partial f}  \Pot{r}{\partial f}{k}\ul\omega_{\partial f} \wedge \tr_{\partial f}{(\mu-\lproj{r}{f}{d-k-1}\mu)}
    \\
    \forall (\mu,\nu) \in \Koly{r+1}{d-k-1}(f) \times \Koly{r}{d-k}(f).
  \end{multline}
  As a consequence,
  \begin{equation}\label{eq:proj.potential}
    \trimproj{r}{f}{d-k}(\star P^k_{r,f}\ul\omega_f) = \omega_f.
  \end{equation}
\end{lemma}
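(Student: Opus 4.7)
The proof of \eqref{def:Pot.correction} is a direct manipulation of the defining relation \eqref{eq:discrete.potential} of $\Pot{r}{f}{k}$. The plan is to split the test form $\mu \in \Koly{r+1}{d-k-1}(f)$ as $\mu = \mu_P + (\mu - \mu_P)$, where $\mu_P \coloneq \lproj{r}{f}{d-k-1}\mu \in \Poly{r}{}\Lambda^{d-k-1}(f)$ is a valid test form for the definition \eqref{eq:discrete.exterior.derivative:ddr} of the discrete exterior derivative. Applying that definition to rewrite the $\mu_P$-part of $\int_f \rd_{r,f}^{k}\ul\omega_f \wedge \mu$ in terms of $\star^{-1}\omega_f$ and boundary traces, and then collecting terms, yields precisely \eqref{def:Pot.correction}. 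The case $d=k$ is handled separately (and trivially): by convention $\Koly{r+1}{-1}(f)=\{0\}$, so only the $\nu$-term survives, and \eqref{eq:discrete.potential:d=k} gives $\Pot{r}{f}{k}\ul\omega_f = \star^{-1}\omega_f$.

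The proof of \eqref{eq:proj.potential} builds on the identity just obtained. For $d=k$, the claim is immediate: \eqref{eq:discrete.potential:d=k} gives $\star \Pot{r}{f}{k}\ul\omega_f = \star\star^{-1}\omega_f = \omega_f$ (by \eqref{eq:inv.star}), and $\omega_f$ already belongs to $\Poly{r}{-}\Lambda^0(f)$, so the projector leaves it invariant. For $d \ge k+1$, I specialise \eqref{def:Pot.correction} to pairs $(\mu,\nu)$ with $\mu \in \Koly{r}{d-k-1}(f)\subset \Koly{r+1}{d-k-1}(f)$: such $\mu$ lies in $\Poly{r}{}\Lambda^{d-k-1}(f)$, so $\lproj{r}{f}{d-k-1}\mu = \mu$ and the two correction terms cancel. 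What remains is
\[
\int_f \Pot{r}{f}{k}\ul\omega_f \wedge (\rd\mu + \nu) = \int_f \star^{-1}\omega_f \wedge (\rd\mu + \nu)
\qquad \forall (\mu,\nu)\in\Koly{r}{d-k-1}(f)\times\Koly{r}{d-k}(f).
\]
By the isomorphism \eqref{eq:isomorphism.Prtrimmed.koszul} (applicable since $d-k\ge 1$), as $(\mu,\nu)$ ranges over $\Koly{r}{d-k-1}(f)\times\Koly{r}{d-k}(f)$, the combination $\rd\mu+\nu$ spans all of $\Poly{r}{-}\Lambda^{d-k}(f)$.

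To convert this into the projection identity, I invoke \eqref{eq:remove.projector} with $\mathcal{X}=\Poly{r}{-}\Lambda^{d-k}(f)$ and rewrite $\int_f\Pot{r}{f}{k}\ul\omega_f\wedge\beta$ as $\int_f\beta\wedge\star\,\trimproj{r}{f}{d-k}(\star \Pot{r}{f}{k}\ul\omega_f)$, and likewise $\int_f \star^{-1}\omega_f\wedge\beta$ as $\int_f \beta\wedge\star\omega_f$ (using that $\omega_f\in\Poly{r}{-}\Lambda^{d-k}(f)$ is fixed by its own projector). The resulting equality, valid for all $\beta\in\Poly{r}{-}\Lambda^{d-k}(f)$, together with the defining property \eqref{eq:def.piX} of the $L^2$-projector onto that same space, forces $\trimproj{r}{f}{d-k}(\star \Pot{r}{f}{k}\ul\omega_f) = \omega_f$.

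No step poses a serious obstacle; the argument is bookkeeping driven by the recursive definition of the potential. The only subtlety is to track polynomial degrees and Koszul-subspace memberships carefully so that both \eqref{eq:discrete.exterior.derivative:ddr} and the isomorphism \eqref{eq:isomorphism.Prtrimmed.koszul} can be applied with admissible test forms, and to separate the degenerate case $d=k$ (where \eqref{eq:isomorphism.Prtrimmed.koszul} does not apply) from the generic one.
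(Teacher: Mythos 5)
Your proposal is correct and follows essentially the same route as the paper's proof: inserting $\pm\lproj{r}{f}{d-k-1}\mu$ into \eqref{eq:discrete.potential}, eliminating the $\lproj{r}{f}{d-k-1}\mu$ part via \eqref{eq:discrete.exterior.derivative:ddr}, and then deducing \eqref{eq:proj.potential} by restricting to $\mu\in\Koly{r}{d-k-1}(f)$ and invoking the isomorphism \eqref{eq:isomorphism.Prtrimmed.koszul} together with \eqref{eq:remove.projector}. The handling of the degenerate case $d=k$ also matches the paper.
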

\begin{proof}
  If $d=k$, the relation \eqref{def:Pot.correction} follows from $\Koly{r+1}{d-k-1}(f)=\Koly{r+1}{-1}(f)= \{0\}$ and $\Pot{r}{f}{k} \ul\omega_f=\star^{-1}\omega_f$ (see \eqref{eq:discrete.potential:d=k}), which also establishes \eqref{eq:proj.potential} since $\trimproj{r}{f}{0}={\rm Id}$ on $\Poly{r}{}\Lambda^0(f)=\Poly{r}{-}\Lambda^0(f)$.

  Consider now $d\ge k+1$ and take $(\mu,\nu) \in \Koly{r+1}{d-k-1}(f) \times \Koly{r}{d-k}(f)$.
  Inserting $\pm \lproj{r}{f}{d-k-1}\mu$ into the definition \eqref{eq:discrete.potential} of $\Pot{r}{f}{k}\ul\omega_f$ we have
  \begin{equation} \label{def:Pot.correction.1}
    \begin{aligned}
      (-1)^{k+1} {}&\int_f \Pot{r}{f}{k} \ul\omega_f \wedge (\rd\mu+\nu)\\
      ={}& \int_f \rd_{r,f}^{k} \ul\omega_f \wedge \lproj{r}{f}{d-k-1}\mu+\int_f \rd_{r,f}^{k} \ul\omega_f \wedge (\mu-\lproj{r}{f}{d-k-1}\mu)
      -  \int_{\partial f}  \Pot{r}{\partial f}{k}\ul\omega_{\partial f} \wedge \tr_{\partial f}{\mu}\\
      &+ (-1)^{k+1}\int_f \star^{-1}\omega_f \wedge\nu.
    \end{aligned}
  \end{equation}
  On the other hand, the definition \eqref{eq:discrete.exterior.derivative:ddr} of $\rd_{r,f}^k\ul\omega_f$ applied to $\lproj{r}{f}{d-k-1}\mu$ yields
  \begin{multline*}
    \int_f \rd_{r,f}^{k} \ul\omega_f \wedge \lproj{r}{f}{d-k-1}\mu
    \\
    =(-1)^{k+1} \int_f \star^{-1}\omega_f\wedge \rd (\lproj{r}{f}{d-k-1}\mu)
    + \int_{\partial f} \Pot{r}{\partial f}{k}\ul\omega_{\partial f} \wedge \tr_{\partial f}(\lproj{r}{f}{d-k-1}\mu).
  \end{multline*}
  Substituting this relation into \eqref{def:Pot.correction.1} yields \eqref{def:Pot.correction}.

  To prove \eqref{eq:proj.potential} we apply \eqref{def:Pot.correction} with $(\mu,\nu)\in\Koly{r}{d-k-1}(f)\times \Koly{r}{d-k}(f)$ and notice that $\mu=\lproj{r}{f}{d-k-1}\mu$ since $\Koly{r}{d-k-1}(f)\subset \Poly{r}{}\Lambda^{d-k-1}(f)$, to get
  \begin{equation}\label{eq:Proj.trimmed.1}
    \int_f P^k_{r,f}\ul\omega_f\wedge(\rd\mu+\nu)
    = \int_f \star^{-1}\omega_f\wedge (\rd\mu + \nu).
  \end{equation}
  The isomorphism \eqref{eq:isomorphism.Prtrimmed.koszul} with $\ell=d-k\ge 1$ shows that $\rd\mu + \nu$ spans $\Poly{r}{-}\Lambda^{d-k}(f)$ when $(\mu,\nu)$ span $\Koly{r}{d-k-1}(f)\times \Koly{r}{d-k}(f)$. Hence, \eqref{eq:Proj.trimmed.1} gives
  \[
  \int_f\star^{-1}\omega_f\wedge\alpha
  = \int_fP^k_{r,f}\ul\omega_f\wedge\alpha
  \stackrel{\eqref{eq:remove.projector}}{=} \int_f\star^{-1}\trimproj{r}{f}{d-k}(\star P^k_{r,f}\ul\omega_f)\wedge\alpha
  \qquad\forall\alpha\in \Poly{r}{-}\Lambda^{d-k}(f),
  \]
  proving \eqref{eq:proj.potential} since $\star^{-1}\omega_f\in \Poly{r}{-}\Lambda^{d-k}(f)$ and $\star^{-1}$ is an isomorphism.
\end{proof}

\begin{theorem}[Commutation property for the local discrete exterior derivative]
  For all integers $d\in [1,n]$ and $k\le d-1$, and for all $f\in\Delta_d(\Mh)$, recalling the definition \eqref{eq:interpolator} of the interpolators, it holds
  \begin{equation}\label{eq:commut.d}
    \ud_{r,f}^k(\ul I_{r,f}^k\omega) = \ul I_{r,f}^{k+1}(\rd\omega)\qquad\forall\omega\in C^1\Lambda^k(\overline{f}),
  \end{equation}
  expressing the commutativity of the following diagram:%
  \[
  \begin{tikzcd}
    C^1\Lambda^k(\overline{f})\arrow{r}{\rd}\arrow{d}{\ul I_{r,f}^k}
    & C^0\Lambda^{k+1}(\overline{f})\arrow{d}{\ul I_{r,f}^{k+1}}
    \\
    \ul X_{r,f}^k\arrow{r}{\ud_{r,f}^k}
    & \ul X_{r,f}^{k+1}.
  \end{tikzcd}
  \]
\end{theorem}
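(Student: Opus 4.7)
The plan is to reduce the componentwise equality to the top-dimensional cell and then unwind the definitions, without introducing any new induction. For any subcell $f'\in\Delta_{d'}(f)$ with $d'\in[k+1,d]$, the $f'$-component of $\ud_{r,f}^k\ul I_{r,f}^k\omega$ equals $\trimproj{r}{f'}{d'-k-1}(\star\rd_{r,f'}^k\ul I_{r,f'}^k\tr_{f'}\omega)$, because restricting an interpolate to $f'$ yields the interpolate on $f'$ of the trace (since $\tr_{f''}\circ\tr_{f'}=\tr_{f''}$ for $f''\subset f'$). Likewise, the $f'$-component of $\ul I_{r,f}^{k+1}\rd\omega$ equals $\trimproj{r}{f'}{d'-k-1}(\star\rd\tr_{f'}\omega)$, because the trace is a pullback and commutes with $\rd$. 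It therefore suffices to establish the equality for the top cell, i.e., with $f'=f$ and general $f$, $\omega\in C^1\Lambda^k(\overline f)$.

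Applying \eqref{eq:remove.projector} with $\mathcal{X}=\Poly{r}{-}\Lambda^{d-k-1}(f)$, the equality $\trimproj{r}{f}{d-k-1}(\star\rd_{r,f}^k\ul I_{r,f}^k\omega)=\trimproj{r}{f}{d-k-1}(\star\rd\omega)$ is equivalent to the integral identity
\[
\int_f \rd_{r,f}^k\ul I_{r,f}^k\omega\wedge\mu = \int_f \rd\omega\wedge\mu\qquad\forall\mu\in\Poly{r}{-}\Lambda^{d-k-1}(f).
\]
I would expand the left-hand side via the definition \eqref{eq:discrete.exterior.derivative:ddr} of $\rd_{r,f}^k$, and the right-hand side via the Stokes formula \eqref{eq:ipp}. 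Both sides split into a volume term paired with $\rd\mu$ and a boundary term paired with $\tr_{\partial f}\mu$.

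For the volume terms, the inclusion $\rd\mu\in\rd\Poly{r}{}\Lambda^{d-k-1}(f)\subset\Poly{r-1}{}\Lambda^{d-k}(f)\subset\Poly{r}{-}\Lambda^{d-k}(f)$ (by \eqref{eq:trimmed.between}) makes another application of \eqref{eq:remove.projector} legal, giving $\int_f\star^{-1}(\ul I_{r,f}^k\omega)_f\wedge\rd\mu = \int_f\omega\wedge\rd\mu$. For the boundary terms, I decompose $\int_{\partial f}$ into a sum over $f'\in\Delta_{d-1}(f)$ according to \eqref{eq:integral.relative.orientation}. On each such $f'$, Lemma \ref{lem:traces.trimmed} provides $\tr_{f'}\mu\in\Poly{r}{-}\Lambda^{(d-1)-k}(f')$; then \eqref{eq:proj.potential}, applied to $f'$ with $\ul\omega_{f'}=\ul I_{r,f'}^k\tr_{f'}\omega$, combined once more with \eqref{eq:remove.projector}, yields
\[
\int_{f'}\Pot{r}{f'}{k}\ul I_{r,f'}^k\tr_{f'}\omega\wedge\tr_{f'}\mu
= \int_{f'}\tr_{f'}\omega\wedge\tr_{f'}\mu.
\]
Reassembling these identities over $\partial f$ matches the remaining pieces.

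The only delicate bookkeeping — not really an obstacle — is ensuring that the polynomial degrees and trimmed spaces line up so that both uses of \eqref{eq:remove.projector} are legitimate: on the volume side, $\rd\mu$ must sit in $\Poly{r}{-}\Lambda^{d-k}(f)$, which follows from \eqref{eq:trimmed.between}; on the boundary side, $\tr_{f'}\mu$ must sit in $\Poly{r}{-}\Lambda^{(d-1)-k}(f')$, which is exactly Lemma \ref{lem:traces.trimmed}. Because all required structural results are already in place, no recursion on $d$ is needed and the argument reduces to three algebraic moves: Stokes, unfolding \eqref{eq:discrete.exterior.derivative:ddr}, and twice removing a projector via \eqref{eq:remove.projector} (using \eqref{eq:proj.potential} on the boundary side).
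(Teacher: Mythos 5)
Your proposal is correct and follows essentially the same route as the paper's proof: reduce componentwise to a single cell, test against $\mu\in\Poly{r}{-}\Lambda^{d-k-1}(f)$, expand $\rd_{r,f}^k$ via \eqref{eq:discrete.exterior.derivative:ddr} against Stokes, remove the cell projector using \eqref{eq:remove.projector} (with $\rd\mu\in\Poly{r}{-}\Lambda^{d-k}(f)$), and handle the boundary via Lemma \ref{lem:traces.trimmed} together with \eqref{eq:proj.potential}. The only cosmetic difference is that you first reduce to the top cell by noting that restricting the interpolate gives the interpolate of the trace, whereas the paper carries out the identical computation directly on each subcell $f'$.
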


\begin{proof}
  Given the definitions \eqref{eq:interpolator} of the interpolator and \eqref{eq:global.discrete.exterior.derivative} of the discrete exterior derivative, we have to prove that, for all $f'\in\Delta_{d'}(f)$ with $d'\in [k+1,d]$, $\trimproj{r}{f'}{d'-k-1}(\star \rd_{r,f'}^k\ul I_{r,f'}^k\omega)=\trimproj{r}{f'}{d'-k-1}(\star \tr_{f'}(\rd\omega))$.
  Recalling the definition of the projector $\trimproj{r}{f'}{d'-k-1}$ (i.e., \eqref{eq:def.piX} with $\mathcal X=\Poly{r}{-}\Lambda^{d'-k-1}(f')$), we need to prove that, for any $\mu\in\Poly{r}{-}\Lambda^{d'-k-1}(f')$
  \begin{equation*}
    \int_{f'}\star \rd_{r,f'}^k\ul I_{r,f'}^k\omega\wedge\star\mu=\int_{f'}\star \tr_{f'}(\rd\omega)\wedge\star\mu.
  \end{equation*}
  Applying \eqref{eq:commut.star.wedge}, this amounts to proving that
  \begin{equation}\label{eq:toprove}
    \int_{f'}\rd_{r,f'}^k\ul I_{r,f'}^k\omega\wedge \mu=\int_{f'} \tr_{f'}(\rd\omega)\wedge\mu.
  \end{equation}
  Using the definitions \eqref{eq:discrete.exterior.derivative:ddr} of the discrete exterior derivative on $f'$ and \eqref{eq:interpolator} of $\ul I_{r,f'}^k$, we have
  \begin{multline}\label{eq:toprove.2}
    \int_{f'} \rd_{r,f'}^k\ul I_{r,f'}^k\omega\wedge\mu
    \\
    =
    (-1)^{k+1}\int_{f'} \cancelto{\tr_{f'}\omega}{\star^{-1}\trimproj{r}{f'}{d'-k}(\star\tr_{f'}\omega)}\wedge \rd\mu+\int_{\partial f'}P^k_{r,\partial f'} \ul I_{r,\partial f'}^k\omega\wedge\tr_{\partial f'}\mu,
  \end{multline}
  where the substitution is justified by \eqref{eq:remove.projector} with $(\mathcal X,\omega,\mu)\gets(\Poly{r}{-}\Lambda^{d'-k}(f'),\tr_{f'}\omega,\rd\mu)$, since $\rd\mu\in\rd\Poly{r}{-}\Lambda^{d'-k-1}(f')\subset \rd\Poly{r}{}\Lambda^{d'-k-1}(f')\subset\Poly{r}{-}\Lambda^{d'-k}(f')$ (see  \eqref{eq:trimmed.spaces:ell>=1}). For all $f''\in\Delta_{d'-1}(f')$ we have $\tr_{f''}\mu\in\Poly{r}{-}\Lambda^{d'-1-k}(f'')$ (see Lemma \ref{lem:traces.trimmed}), so
  \[
  \begin{alignedat}{2}
    \int_{f''}P^k_{r,f''} \ul I_{r,f''}^k\omega\wedge\tr_{f''}\mu
    \overset{\eqref{eq:remove.projector}}&= \int_{f''}\star^{-1}\trimproj{r}{f''}{d'-1-k}(\star P^k_{r,f''} \ul I_{r,f''}^k\omega)\wedge\tr_{f''}\mu
    \\
    \overset{\eqref{eq:proj.potential},\, \eqref{eq:interpolator}}&=
    \int_{f''}\star^{-1}\trimproj{r}{f''}{d'-1-k}(\star \tr_{f''}\omega)\wedge\tr_{f''}\mu
    \\
    \overset{\eqref{eq:remove.projector}}&=
    \int_{f''} \tr_{f''}\omega\wedge\tr_{f''}\mu.
  \end{alignedat}
  \]
  Summing this relation over $f''\in\Delta_{d'-1}(f')$ and substituting the result into \eqref{eq:toprove.2} we obtain
  \[
  \int_{f'} \rd_{r,f'}^k\ul I_{r,f'}^k\omega\wedge\mu
  =
  (-1)^{k+1}\int_{f'} \tr_{f'}\omega\wedge \rd\mu+\int_{\partial f'}\tr_{\partial f'}\omega\wedge\tr_{\partial f'}\mu.
  \]
  The proof of \eqref{eq:toprove} is concluded invoking the integration by part formula~\eqref{eq:ipp} and writing $\rd\tr_{f'}=\tr_{f'}\rd$ (since the trace is a pullback).
\end{proof}

\subsection{Consistency}\label{sec:complex:polynomial.consistency:ddr}

\begin{proof}[Proof of Theorem~\ref{thm:polynomial.consistency:ddr}]
  The proof is made, as in Theorem~\ref{thm:link.pot.diff}, by induction on $\rho\coloneq d-k$.
  \smallskip

  If $\rho=0$, then $d=k$ and the definitions \eqref{eq:discrete.potential:d=k} of the discrete potential and \eqref{eq:interpolator} of the interpolator give $\Pot{r}{d}{k}\ul I_{r,f}^k\omega = \star^{-1}\trimproj{r}{f}{0}(\star\omega) = \star^{-1}\star\omega = \omega$, where, to remove the projector, we have used the fact that $\star\omega\in\Poly{r}{}\Lambda^0(f) = \Poly{r}{-}\Lambda^0(f)$ (cf.~\eqref{eq:trimmed.spaces:ell=0}).
  \smallskip

  Let us now assume that the lemma holds for a given $\rho\ge 0$, and let us consider $d$ and $k$ such that $d-k=\rho+1$.
  We first consider\eqref{eq:discrete.exterior.derivative:polycons}.
  By relation \eqref{eq:link.pot.diff} applied to $k+1$ instead of $k$ and the commutation property \eqref{eq:commut.d}, we have, for $\omega\in \Poly{r+1}{-}\Lambda^k(f)$,
  \[
  \rd_{r,f}^k\ul I_{r,f}^k\omega=\Pot{r}{f}{k+1}(\ud_{r,f}^k\ul I_{r,f}^k\omega)=\Pot{r}{f}{k+1}\ul I_{r,f}^{k+1}(\rd\omega).
  \]
  We have $\rd\omega\in \rd\Poly{r+1}{-}\Lambda^k(f)\subset\Poly{r}{}\Lambda^{k+1}(f)$, and the pair $(d,k+1)$ satisfies $d-(k+1)=\rho$. We can therefore apply the induction hypothesis to see that \eqref{eq:discrete.potential:polynomial.consistency} holds for this pair and $\rd\omega$ instead of $\omega$; this gives $\rd_{r,f}^k\ul I_{r,f}^k\omega=\rd\omega$ and proves \eqref{eq:discrete.exterior.derivative:polycons}.

  We now turn to \eqref{eq:discrete.potential:polynomial.consistency}. For $\omega\in\Poly{r}{}\Lambda^k(f)$, applying the property \eqref{eq:discrete.exterior.derivative:polycons} that we have just proved to $\omega$ and recalling the definitions \eqref{eq:discrete.potential} and \eqref{eq:interpolator} of the potential and of the interpolator, we find, for all $(\mu,\nu)\in\Koly{r+1}{d-k-1}(f)\times\Koly{r}{d-k}(f)$,
  \begin{multline*}
    (-1)^{k+1}\int_f \Pot{r}{f}{k} \ul I_{r,f}^k\omega \wedge (\rd\mu+\nu)
    \\
    = \int_f \rd\omega \wedge \mu
    -  \int_{\partial f}  \Pot{r}{\partial f}{k}\ul I_{r,\partial f}^k\tr_{\partial f}\omega \wedge \tr_{\partial f}{\mu}
    + (-1)^{k+1}\int_f \cancelto{\omega}{(\star^{-1}\trimproj{r}{f}{d-k}\star\omega)}\wedge\nu,
  \end{multline*}
  the replacement being justified by \eqref{eq:remove.projector} and $\nu\in\Poly{r}{-}\Lambda^{d-k}(f)$ (see \eqref{def:trimmed.spaces}).
  We can then apply the polynomial consistency \eqref{eq:discrete.potential:polynomial.consistency} on each $f'\in\Delta_{d-1}(f)$ (as $(d-1)-k=\rho$) to write $\Pot{r}{\partial f}{k}\ul I_{r,\partial f}^k\tr_{\partial f}\omega = \tr_{\partial f}\omega$ , and then integrate by parts to conclude, since $\mu,\nu$ are generic elements, that $\Pot{r}{f}{k} \ul I_{r,f}^k\omega=\omega$.
\end{proof}

\begin{remark}[Consistency property of the improved potential for $k=0$]\label{rem:consistency.improved.Pot.k=0}
  In the case $k=0$, the improved potential defined in Remark \ref{rem:improved.Pot.k=0} satisfies the following consistency property:
  \[
  \Pot{r+1}{f}{0}\ul I_{r,f}^0\omega = \omega\qquad\forall \omega\in\Poly{r+1}{}\Lambda^0(f).
  \]
  To see this, first notice that when $d=k=0$ we have $\Pot{r+1}{f}{0}=\Pot{r}{f}{0}$ since $\Poly{r+1}{}\Lambda^0(f)=\Poly{r}{}\Lambda^0(f)\cong\Real$, and then, for $d\ge k+1$, invoke the definition \eqref{eq:discrete.potential.k=0} of $\Pot{r+1}{f}{0}\ul I_{r,f}^0\omega$, apply \eqref{eq:discrete.exterior.derivative:polycons} (since $\Poly{r+1}{-}\Lambda^0(f)=\Poly{r+1}{}\Lambda^0(f)$) and a recursion argument on $d$.
\end{remark}

\begin{proof}[Proof of Corollary \ref{cor:polynomial.consistency.smooth:ddr}]
  Since $sp>d$, the Sobolev embedding give $W^{s,p}\Lambda^k(f)\subset C^0(\overline{f})$ and thus the mapping $\Pot{r}{f}{k}\circ\ul I_{r,f}^k: W^{s,p}\Lambda^k(f)\to \Poly{r}{}\Lambda^k(f)$ is well defined.
  Introducing $\lproj{r}{f}{k}\omega=\Pot{r}{f}{k}\ul I_{r,f}^k\lproj{r}{f}{k}\omega$ (the equality coming from \eqref{eq:discrete.potential:polynomial.consistency} applied to $\lproj{r}{f}{k}\omega$ instead of $\omega$) we write,  with hidden constants in $\lesssim$ having the same dependencies as $C$ in \eqref{eq:discrete.potential:polynomial.consistency.smooth},
  \begin{align}
    \seminorm{W^{m,p}\Lambda^k(f)}{\Pot{r}{f}{k}\ul I_{r,f}^k\omega-\omega}\le{}&
    \seminorm{W^{m,p}\Lambda^k(f)}{\Pot{r}{f}{k}\ul I_{r,f}^k(\omega-\lproj{r}{f}{k}\omega)}+\seminorm{W^{m,p}\Lambda^k(f)}{\lproj{r}{f}{k}\omega-\omega}\nonumber\\
    \lesssim{}&
    \sum_{t=m}^sh_f^{t-m}\seminorm{W^{t,p}\Lambda^k(f)}{\omega-\lproj{r}{f}{k}\omega},
    \label{eq:est.PIomega.omega.1}
  \end{align}
  where the second inequality follows from the boundedness \cite[Eq.~(A.21)]{Di-Pietro.Droniou.ea:23*1} of $\Pot{r}{f}{k}\ul I_{r,f}^k$ (the last term in the first line has been included in the sum for $t=m$). If $t\le r+1$, the approximation properties of polynomial $L^2$-projector (\cite[Theorem 1.45]{Di-Pietro.Droniou:20} applied to each component of a fixed basis of alternate forms) yield
  $$
  \seminorm{W^{t,p}\Lambda^k(f)}{\omega-\lproj{r}{f}{k}\omega}\lesssim h_f^{r+1-t}\seminorm{W^{r+1,p}\Lambda^k(f)}{\omega}.
  $$
  If $t > r+1$, since derivatives of order $> r+1$ of $\lproj{r}{f}{k}\omega$ vanish, we have $\seminorm{W^{t,p}\Lambda^k(f)}{\omega-\lproj{r}{f}{k}\omega}
  =\seminorm{W^{t,p}\Lambda^k(f)}{\omega}$. Splitting the sum in the right-hand side of \eqref{eq:est.PIomega.omega.1} between $t\le \min(r+1,s)$ and $t\ge\min(r+1,s)+1$ and applying the results above yields
  \begin{align*}
    \seminorm{W^{m,p}\Lambda^k(f)}{\Pot{r}{f}{k}\ul I_{r,f}^k\omega-\omega}\lesssim{}&
    h_f^{r+1-m}\seminorm{W^{r+1,p}\Lambda^k(f)}{\omega}
    +\sum_{t=\min(r+1,s)+1}^s h_f^{t-m}\seminorm{W^{t,p}\Lambda^k(f)}{\omega}.
  \end{align*}
  (the second sum is actually absent if $s\le r+1$). Writing in the last sum $h_f^{t-m}=h_f^{r+1-m}h_f^{t-r-1}$ and recalling the definition \eqref{eq:def.Wspmax.norm} of $\seminorm{W^{(r+1,s),p}\Lambda^k(f)}{{\cdot}}$ concludes the proof of \eqref{eq:discrete.potential:polynomial.consistency.smooth}.

  \medskip

  To prove \eqref{eq:discrete.exterior.derivative:polycons.smooth}, use the link \eqref{eq:link.pot.diff} between discrete potential and exterior derivative (with $k+1$ instead of $k$) together with the commutation property \eqref{eq:commut.d} to write
  \[
  \rd_{r,f}^k\ul I_{r,f}^k\omega=\Pot{r}{f}{k+1}\ud_{r,f}^k\ul{I}_{r,f}^k\omega=\Pot{r}{f}{k+1}\ul{I}_{r,f}^{k+1}(\rd\omega)
  \]
  and conclude by applying \eqref{eq:discrete.potential:polynomial.consistency.smooth} to $\rd\omega$ instead of $\omega$ and with $k+1$ instead of $k$.
\end{proof}

\begin{proof}[Proof of Lemma \ref{lem:consistency.stabilisation}]
  Notice first that $\omega\in H^{\max(r+1,s)}\Lambda^k(f)$ is continuous over $\overline{f}$ (by Sobolev embedding since $2s>d$), and therefore that $\ul I_{r,f}^k\omega$ is well-defined. It is easily deduced from the polynomial consistency \eqref{eq:discrete.potential:polynomial.consistency} that
  the stabilisation bilinear form vanishes whenever one of its arguments is the interpolate of a polynomial of degree $\le r$. Hence,
  \[
  s_{k,f}(\ul I_{r,f}^k\omega,\ul I_{r,f}^k\omega)=
  s_{k,f}(\ul I_{r,f}^k(\omega-\lproj{r}{f}{k}\omega),\ul I_{r,f}^k(\omega-\lproj{r}{f}{k}\omega)).
  \]
  We then invoke \cite[Lemmas~10 and 11]{Di-Pietro.Droniou.ea:23*1} (with $r\leftarrow s$ and $s\leftarrow 2$) to infer
  \[
  s_{k,f}(\ul I_{r,f}^k\omega,\ul I_{r,f}^k\omega)
  \lesssim\left(\sum_{t=0}^s h_f^t\seminorm{H^t\Lambda^k(f)}{\omega-\lproj{r}{f}{k}\omega}\right)^2
  \]
  with hidden constant having the same dependencies as $C$ in \eqref{eq:consistency.stabilisation}.
  The conclusion follows as in the proof of \eqref{eq:discrete.potential:polynomial.consistency.smooth}: for $t\le r+1$, we invoke the approximation properties \cite[Theorem 1.45]{Di-Pietro.Droniou:20} of $\lproj{r}{f}{k}$ to write $\seminorm{H^t\Lambda^k(f)}{\omega-\lproj{r}{f}{k}\omega}\lesssim
  h_f^{r+1-t}\seminorm{H^{r+1}\Lambda^k(f)}{\omega}$ while, for $t > r+1$, we eliminate $\lproj{r}{f}{k}\omega$ from the semi-norms since its derivatives of degree $> r+1$ vanish.
\end{proof}


\subsection{Cohomology}\label{sec:complex+cohomology:cohomology}

A strategy to establish the exactness of the de Rham complex (for a domain with trivial topology) is to design a Poincar\'e operator $p:C^1\Lambda^k(\overline{\Omega})\to C^1\Lambda^{k-1}(\overline{\Omega})$, that satisfies $\rd p+p\rd={\rm Id}$.
The Poincar\'e operator is built integrating a certain flow of contracted differential forms; see \cite{Christiansen.Hu:18,Lang:99} for details and applications to the design of finite element complexes. Extending such a construction to the context of fully discrete spaces is not trivial, as it is not clear how the discrete polynomial components on cells should evolve with such a flow. We therefore select an alternative approach, more suited to hierarchical discrete spaces.

The starting point is the following idea: if $\eta\in C^1\Lambda^k(\overline{\Omega})$ satisfies $\rd\eta=0$ and we have $\omega\in C^2\Lambda^{k-1}(\overline{\Omega})$ such that $\rd \omega=\eta$, then \eqref{eq:ipp} shows that, for any $d$-cell $f$,
\begin{equation}\label{eq:hierarchical.construction}
  (-1)^k\int_f \omega\wedge \rd\mu = \int_f \eta\wedge \mu - \int_{\partial f}\tr_{\partial f}\omega\wedge\tr_{\partial f}\mu\qquad\forall\mu\in C^1\Lambda^{d-k}(\overline{\Omega}).
\end{equation}
In the discrete setting, $\omega$ is built starting from the lowest-dimensional cells, and \eqref{eq:hierarchical.construction} thus gives a condition on $\omega$ over $f$ based on the already constructed $\tr_{\partial f}\omega$. To start this process, we must fix the values of $\omega$ on the lowest-dimensional cells, which is not an easy task in general. Actually, from the point of view of differential forms, the lowest-dimensional cells encode the topology of the domain, and thus the cohomology of the complex; for a generic $\eta$, the recursive construction of $\omega$ can therefore only be fully complete if the complex is exact, and thus the topology trivial.

This limitation is circumvented by using the following idea: if $\eta$ has zero average on $k$-cells, then $\omega$ can be set to zero on $(k-1)$-cells, which completes the construction above (see Lemma \ref{lem:exact.Xsharp} below). This result is then exploited, through the extension/reduction strategy developed in \cite{Di-Pietro.Droniou:23*2,Di-Pietro.Droniou.ea:23}, to compare the cohomology of the arbitrary-order \DDR{r} complex to that of the lowest-order \DDR{0} complex, which is trivially isomorphic to the CW complex based on the mesh.

We therefore start by considering the subspace $\ul X_{r,h,\flat}^k$ of $\ul X_{r,h}^k$ made of vectors of differential forms whose integrals over cells of dimension $d=k$ vanish:
\[
\ul X_{r,h,\flat}^k
\coloneq\left\{
\ul\omega_h=(\omega_f)_{f\in\Delta_d(\Mh),\,d\in [k,n]}\,:\, \int_f \star^{-1}\omega_f=0\quad\forall f\in\Delta_k(\Mh)
\right\}.
\]

\begin{lemma}[Exactness property for $\ul X_{r,h,\flat}^k$]\label{lem:exact.Xsharp}
  For any integer $k\in [0,n]$, if $\ul\eta_h\in \ul X_{r,h,\flat}^k$ satisfies $\ud_{r,h}^k\ul\eta_h=\ul 0$, then there exists $\ul\omega_h\in\ul X_{r,h,\flat}^{k-1}$ such that $\ul\eta_h=\ud_{r,h}^{k-1}\ul\omega_h$, where, in accordance with \eqref{eq:ddr}, we have set $\ud_{r,h}^{-1} = \ud_{r,h}^n \coloneq 0$.
\end{lemma}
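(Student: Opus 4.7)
The plan is to construct $\ul\omega_h$ by induction on the dimension $d$ of the mesh cells, following the hierarchical strategy described in the paragraph preceding the lemma. The induction starts at $d = k-1$, where we set $\omega_f := 0$ for every $f \in \Delta_{k-1}(\Mh)$. This trivially places the partial vector in $\ul X_{r,h,\flat}^{k-1}$ and, via \eqref{eq:discrete.potential:d=k}, yields $\Pot{r}{f}{k-1}\ul\omega_f = 0$ on all $(k-1)$-cells.

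For the inductive step at dimension $d \in [k,n]$, suppose that $\omega_{f'}$ has been built for every $f' \in \Delta_{d'}(\Mh)$ with $d' < d$ so that $\trimproj{r}{f'}{d'-k}(\star \rd_{r,f'}^{k-1}\ul\omega_{f'}) = \eta_{f'}$ holds for all $k \le d' < d$. For each $f \in \Delta_d(\Mh)$ we need $\omega_f \in \Poly{r}{-}\Lambda^{d-k+1}(f)$ making the same identity true on $f$. Using \eqref{eq:discrete.exterior.derivative:ddr} (with $k$ replaced by $k-1$) together with \eqref{eq:remove.projector}, this is equivalent to solving
\begin{multline*}
(-1)^k\int_f \star^{-1}\omega_f \wedge \rd\mu = \int_f \star^{-1}\eta_f \wedge \mu - \int_{\partial f}\Pot{r}{\partial f}{k-1}\ul\omega_{\partial f}\wedge\tr_{\partial f}\mu
\\ \forall\mu \in \Poly{r}{-}\Lambda^{d-k}(f).
\end{multline*}

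The hard part, and the key step, is verifying the compatibility condition that makes this system solvable: the right-hand side, as a linear functional of $\mu$, must vanish on $\ker\rd \cap \Poly{r}{-}\Lambda^{d-k}(f)$. In the base case $d = k$, this kernel is $\Poly{0}{}\Lambda^0(f) \cong \Real$, and the required vanishing is exactly the $\flat$-hypothesis $\int_f \star^{-1}\eta_f = 0$ (the boundary term being zero by the initialization). For $d \ge k+1$, the kernel is $\rd\Koly{r}{d-k-1}(f)$; writing $\mu = \rd\alpha$ and applying the hypothesis $\ud_{r,h}^k\ul\eta_h = 0$ on $f$, unfolded through \eqref{eq:discrete.exterior.derivative:ddr} applied to $\ul\eta_h$, yields
$\int_f \star^{-1}\eta_f\wedge\rd\alpha = (-1)^k\int_{\partial f}\Pot{r}{\partial f}{k}\ul\eta_{\partial f}\wedge\tr_{\partial f}\alpha$.
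The inductive hypothesis ensures $\ud_{r,f'}^{k-1}\ul\omega_{f'} = \ul\eta_{f'}$ on each $(d-1)$-subcell $f'$, so by \eqref{eq:link.pot.diff} we have $\Pot{r}{\partial f}{k}\ul\eta_{\partial f} = \rd_{r,\partial f}^{k-1}\ul\omega_{\partial f}$. Applying \eqref{eq:discrete.exterior.derivative:ddr} on each $(d-1)$-cell with test $\tr_{f'}\alpha$, and then invoking the telescoping identity from the proof of Lemma \ref{lem:link.diff.subcells} (chains of incidence numbers $\varepsilon_{f\!f'}\varepsilon_{f'\!f''}$ summing to zero), collapses this boundary integral to exactly $\int_{\partial f}\Pot{r}{\partial f}{k-1}\ul\omega_{\partial f}\wedge\tr_{\partial f}\rd\alpha$, producing the required cancellation.

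Once compatibility is secured, existence of $\omega_f \in \Poly{r}{-}\Lambda^{d-k+1}(f)$ follows from the structural properties of trimmed polynomial spaces: via the isomorphism \eqref{eq:isomorphism.Prtrimmed.koszul} one may seek $\omega_f$ in the $\rd\Koly{r}{d-k}(f)$ factor, where $\rd$ is injective on $\Koly{r}{d-k}(f)$, and the resulting linear system on $\Koly{r}{d-k}(f)$ is non-degenerate modulo the compatibility subspace. Iterating $d = k, \ldots, n$ produces the desired $\ul\omega_h \in \ul X_{r,h,\flat}^{k-1}$ with $\ud_{r,h}^{k-1}\ul\omega_h = \ul\eta_h$; the degenerate case $k = 0$, where $\ud_{r,h}^{-1} = 0$, requires an auxiliary argument showing that $\ul X_{r,h,\flat}^0 \cap \ker\ud_{r,h}^0 = \{0\}$, which follows by running the same reconstruction on $0$- and $1$-cells.
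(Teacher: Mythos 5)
Your construction is, in substance, the paper's: you initialise $\omega_f=0$ on $(k-1)$-cells, build $\omega_f$ cell by cell in increasing dimension, and close the induction with exactly the same ingredients (the zero-average condition for constant test forms, \eqref{eq:link.pot.diff} on subcells, the telescoping of incidence numbers from Lemma \ref{lem:link.diff.subcells}, and the hypothesis $\ud_{r,h}^k\ul\eta_h=\ul 0$ unfolded on $f$). The one organisational difference is that you pose the full system over $\mu\in\Poly{r}{-}\Lambda^{d-k}(f)$ and verify a Fredholm-type compatibility condition on $\ker\rd\cap\Poly{r}{-}\Lambda^{d-k}(f)=\rd\Koly{r}{d-k-1}(f)$, whereas the paper \emph{defines} $\omega_f$ by the always-well-posed subsystem tested against $\Koly{r}{d-k}(f)$ only (relation \eqref{def:omega_f}) and then \emph{verifies} the remaining test directions. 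These are equivalent reorganisations of the same computation; yours makes the role of the hypotheses more transparent up front, the paper's avoids any solvability discussion. Two small imprecisions: the quantity $\int_f\star^{-1}\eta_f\wedge\mu$ in your system should be read as $\int_f\Pot{r}{f}{k}\ul\eta_f\wedge\mu$ (they agree for $\mu\in\Poly{r}{-}\Lambda^{d-k}(f)$ by \eqref{eq:proj.potential} and \eqref{eq:remove.projector}, but the potential is what the subsequent manipulations use); and the ``collapse'' of the boundary term rests on the potential definition \eqref{eq:discrete.potential} on the $(d-1)$-cells (i.e.\ on Lemma \ref{lem:link.diff.subcells} with $k-1$ in place of $k$), or on \eqref{eq:discrete.exterior.derivative:ddr} \emph{combined with} \eqref{eq:proj.potential} to trade $\star^{-1}\omega_{f'}$ for $\Pot{r}{f'}{k-1}\ul\omega_{f'}$ — not on \eqref{eq:discrete.exterior.derivative:ddr} alone as you state.

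The genuine weak point is your last sentence on $k=0$. Showing $\ul X_{r,h,\flat}^0\cap\Ker\ud_{r,h}^0=\{0\}$ cannot be done ``on $0$- and $1$-cells'': an element of $\ul X_{r,h}^0$ carries components $\eta_f\in\Poly{r-1}{}\Lambda^d(f)$ on cells of \emph{every} dimension $d\in[0,n]$, and all of them must be shown to vanish. The correct argument (which the paper gives) is a separate induction on $d$ up to $n$: the $\flat$-condition kills the vertex values, and then, on each $d$-cell $f$ with $\ul\eta_{\partial f}=\ul 0$, the relation $\rd_{r,f}^0\ul\eta_f=0$ (from \eqref{eq:link.pot.diff}) inserted in \eqref{eq:discrete.exterior.derivative:ddr} gives $\int_f\star^{-1}\eta_f\wedge\rd\mu=0$ for all $\mu\in\Poly{r}{}\Lambda^{d-1}(f)$; since $\rd\Poly{r}{}\Lambda^{d-1}(f)=\Poly{r}{-}\Lambda^d(f)\ni\eta_f$, this forces $\eta_f=0$. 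As written, your $k=0$ case is a gap, though an easily repairable one.
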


\begin{remark}[Exact sub-complex]
  It can easily be checked that $\ud_{r,h}^k:\ul X_{r,h,\flat}^k\to\ul X_{r,h,\flat}^{k+1}$. As a consequence, the previous lemma shows that $(\ul X_{r,h,\flat}^k,\ud_{r,h}^k)_k$ is an exact sub-complex of \DDR{r} (even if the latter complex is not exact).
\end{remark}

\begin{proof}[Proof of Lemma~\ref{lem:exact.Xsharp}]
  We first notice that the case $r=0$ is trivial since, for all $k$, $\ul X_{0,h,\flat}^k=\{(0)_{f\in\Delta_k(\Mh)}\}$. This comes from the fact that the space $\ul X_{0,h}^k$ only has non-zero components (which are moreover constant) on cells of dimension $d=k$; to check this, notice that the spaces \eqref{eq:trimmed.spaces:ell>=1} are all trivial since the first component vanishes for $k$-forms with constant coefficients, while the second is zero by \eqref{eq:Koly.0.ell=Koly.r.d=0}.
  We can therefore assume that $r\ge 1$. The cases $k=0$ and $k\ge 1$ have to be handled separately.
  \medskip\\
  \noindent\underline{Case $k=0$.} We prove that, if $\ul\eta_h\in\ul X_{r,h,\flat}^0$ and $\ud_{r,h}^0\ul\eta_h=\ul 0$, then $\eta_f=0$ for all $f\in\Delta_d(\Mh)$, $d\in [0,n]$. This is done by induction on $d$. The case $d=0$ follows immediately from the definition of $\ul X_{r,h,\flat}^0$ which shows that the value of $\star^{-1}\eta_f$ on any vertex $f\in\Delta_0(\Mh)$ is zero. Assuming that all components of $\ul\eta_h$ on cells of dimension $d-1\ge 0$ vanish, we now prove that $\eta_f=0$ for all $f\in\Delta_d(\Mh)$.
  Note first that, by \eqref{eq:link.pot.diff}, the property $\ud_{r,f}^0\ul\eta_f=\ul 0$ implies $\rd_{r,f}^0\ul\eta_f=0$. Enforcing then $\ul\eta_{\partial f}=\ul 0$ (by induction hypothesis) in the definition \eqref{eq:discrete.exterior.derivative:ddr} of $\rd_{r,f}^0\ul\eta_f$ gives
  \[
  \int_f \star^{-1}\eta_f\wedge \rd\mu=0\qquad\forall \mu\in\Poly{r}{}\Lambda^{d-1}(f).
  \]
  By definition \eqref{eq:trimmed.spaces:ell>=1} of the trimmed space with $\ell=d$, and accounting for \eqref{eq:Koly.0.ell=Koly.r.d=0}, we have $\rd\Poly{r}{}\Lambda^{d-1}(f)=\Poly{r}{-}\Lambda^d(f)$, so the relation above and \eqref{eq:commut.star.wedge} with $(\omega,\mu)\gets(\eta_f,\rd\mu)$ and $\rho = \rd\mu$ show that $\int_f\eta_f\wedge\star\rho=0$ for all $\rho\in\Poly{r}{-}\Lambda^d(f)$. Since $\eta_f$ belongs to this same space, we conclude that $\eta_f=0$.
  \medskip\\
  \noindent\underline{Case $k\ge 1$.}
  Let $\ul\eta_h\in\ul X_{r,h,\flat}^k$ be such that $\ud_{r,h}^k\ul\eta_h=\ul 0$, and let us construct $\ul\omega_h\in \ul X_{r,h,\flat}^{k-1}$ such that $\ud_{r,h}^{k-1}\ul\omega_h=\ul\eta_h$. This construction of $\ul\omega_h$ is done by increasing dimension $d\in [k-1,n]$ of the cells. For all $f\in\Delta_{k-1}(\Mh)$, we set $\omega_f=0$ (which ensures, in particular, that the zero-average condition embedded in the space $\ul X_{r,h,\flat}^{k-1}$ is fulfilled).
  Assume now that the components of $\ul\omega_h$ have been constructed up to cells of dimension $d-1\ge k-1$, and consider $f\in\Delta_d(\Mh)$. We choose $\omega_f\in \Poly{r}{-}\Lambda^{d-k+1}(f)$ such that the following relation holds:
  \begin{equation}\label{def:omega_f}
    (-1)^k\int_f \star^{-1}\omega_f\wedge\rd\mu = \int_f \Pot{r}{f}{k}\ul\eta_f\wedge\mu-\int_{\partial f}\Pot{r}{\partial f}{k-1}\ul\omega_{\partial f}\wedge\tr_{\partial f}\mu
    \qquad\forall\mu\in \Koly{r}{d-k}(f).
  \end{equation}
  Notice that, since the construction is recursive on the dimension of the cells, $\ul\omega_{\partial f}$ has already been constructed at this stage. Owing to the isomorphism \eqref{eq:isomorphism.Prtrimmed.koszul} with $\ell = d - k + 1\ge 1$, this relation completely defines the projection of $\omega_f$ on $\rd\Koly{r}{d-k}(f)\subset\Poly{r}{-}\Lambda^{d-k+1}(f)$. The projection of $\omega_f$ on the remaining component $\Koly{r}{d-k+1}(f)$ of $\Poly{r}{-}\Lambda^{d-k+1}(f)$ is not relevant to the rest of the proof and can be set to 0.
  \medskip

  Let us now prove that $\ud_{r,h}^{k-1}\ul\omega_h=\ul\eta_h$. It suffices to show that
  \begin{equation}\label{eq:domega=Peta}
    \rd_{r,f}^{k-1}\ul\omega_f=\Pot{r}{f}{k}\ul\eta_f\qquad\forall f\in\Delta_d(\Mh)\,,\;d\in [k,n].
  \end{equation}
  Indeed, applying $\trimproj{r}{f}{d-k}\star$ to this relation and using \eqref{eq:proj.potential}
  yields $\trimproj{r}{f}{d-k}(\star\rd_{r,f}^{k-1}\ul\omega_f)=\eta_f$; using this relation for all cells $f$, and recalling the definition \eqref{eq:global.discrete.exterior.derivative} of the global discrete exterior derivative (with $k-1$ instead of $k$), then gives $\ud_{r,h}^{k-1}\ul\omega_h=\ul\eta_h$ as claimed.

  The relation \eqref{eq:domega=Peta} is a direct consequence of the following property:
  \begin{equation}\label{eq:domega=eta}
    \int_f \rd_{r,f}^{k-1}\ul\omega_f \wedge\mu=\int_f\Pot{r}{f}{k}\ul\eta_f\wedge\mu\qquad\forall\mu\in\Poly{r}{}\Lambda^{d-k}(f).
  \end{equation}
  Owing to \eqref{eq:decomposition.Pr}, we only need to prove this relation first for $\mu\in\Koly{r}{d-k}(f)$, and then $\mu\in\Poly{0}{}\Lambda^0(f)$ if $d=k$ or $\mu\in\rd\Poly{r+1}{}\Lambda^{d-k-1}(f)$ if $d\ge k+1$.

  If $\mu\in\Koly{r}{d-k}(f)$, the definition \eqref{eq:discrete.exterior.derivative:ddr} of $\rd_{r,f}^{k-1}\ul\omega_f$ together with the property \eqref{def:omega_f} immedia\-tely give \eqref{eq:domega=eta}.

  Let us consider the case $d=k$ and $\mu\in\Poly{0}{}\Lambda^0(f)$. Then $\rd\mu=0$, so the definition \eqref{eq:discrete.exterior.derivative:ddr} of $\ud_{r,f}^{k-1}\ul\omega_f$ and $\ul\omega_{\partial f}=0$ (by construction, $\ul\omega_h$ vanishes on cells of dimension $d-1=k-1$) show that the left-hand side of \eqref{eq:domega=eta} vanishes. Since $\Pot{r}{f}{k}\ul\eta_f=\star^{-1}\eta_f$ (see \eqref{eq:discrete.potential:d=k}) and $\int_f\star^{-1}\eta_f=0$ as $\ul\eta_h\in\ul X_{r,h,\flat}^{k}$, the right-hand side of \eqref{eq:domega=eta} vanishes as well, and this relation holds.

  Finally, we turn to the case $d\ge k+1$ and $\mu\in \rd\Poly{r+1}{}\Lambda^{d-k-1}(f)$, which is proved by induction on $d$ (the base case $d=k$ having already been covered).
  By \eqref{eq:dP.dkappaP} with $(\ell,r)\gets(d-k-1,r+1)$, we have $\mu\in \rd\Koly{r+1}{d-k-1}(f)$, and we can therefore write $\mu=\rd\alpha$ with $\alpha\in\Koly{r+1}{d-k-1}(f)\subset \Poly{r+1}{-}\Lambda^{d-k-1}(f)$ (see \eqref{def:trimmed.spaces}).
  Invoking the link \eqref{eq:link.discrete.exterior.derivatives} between discrete exterior derivatives on subcells (notice that $d\ge (k-1)+2$), we obtain
  \[
  \int_f \rd_{r,f}^{k-1}\ul\omega_f \wedge\mu=(-1)^k\int_{\partial f}\rd_{r,\partial f}^{k-1}\ul\omega_{\partial f}\wedge\tr_{\partial f}\alpha
  =(-1)^k\int_{\partial f}\Pot{r}{\partial f}{k}\ul\eta_{\partial f}\wedge\tr_{\partial f}\alpha,
  \]
  where the second equality follows from the induction hypothesis that \eqref{eq:domega=eta} holds on subcells of $f$.
  We have $\ud_{r,f}^k\ul\eta_f=0$ and $d\ge k+1$, so we can apply \eqref{eq:link.pot.diff} with $k+1$ instead of $k$ to get $\rd_{r,f}^k\ul\eta_f=0$; the definition \eqref{eq:discrete.potential} of $\Pot{r}{f}{k}\ul\eta_f$ (with $(\mu,\nu)\gets(\alpha,0)$, see Remark \ref{rem:discrete.potential:validity} for the validity of this choice of $\mu$) allows us to continue with
  \[
  \int_f \rd_{r,f}^{k-1}\ul\omega_f \wedge\mu=-(-1)^k\times(-1)^{k+1}\int_f\Pot{r}{f}{k}\ul\eta_f\wedge\rd\alpha.
  \]
  Recalling that $\rd\alpha=\mu$ concludes the proof of \eqref{eq:domega=eta}.
\end{proof}

\begin{proof}[Proof of Theorem~\ref{thm:cohomology:ddr}]
  As in \cite[Lemma 4]{Di-Pietro.Droniou.ea:23}, it is straightforward to see that the (discrete) de Rham map establishes a chain isomorphism between the lowest-degree complex \DDR{0} and the CW complex defined by $\Mh$.
  Since this CW complex has the same cohomology as the de Rham complex \eqref{eq:diff.de.rham}, the proof is complete if we show that the cohomology of \DDR{r} is isomorphic to the cohomology of \DDR{0}.
  This obviously means that we can assume $r\ge 1$ in the following.
  \medskip\\
  \underline{\emph{Step 1: Reductions and extensions.}}
  With the goal of applying \cite[Proposition 2]{Di-Pietro.Droniou:23*2}, we define reduction and extension maps between \DDR{r} and \DDR{0} as in \eqref{eq:diagram.coho}.
  \begin{equation}\label{eq:diagram.coho}
    \begin{tikzcd}
      \DDR{r}:\quad\cdots\arrow{r} & \ul X_{r,h}^k
      \arrow{r}{\ud_{r,h}^k}\arrow[bend left]{d}{\Red{h}{k}} &[3em] \ul X_{r,h}^{k+1}
      \arrow{r}{}\arrow[bend left]{d}{\Red{h}{k+1}} & \cdots\\[2em]
      \DDR{0}:\quad\cdots\arrow{r} & \ul X_{0,h}^k
      \arrow{r}{\ud_{0,h}^k}\arrow[bend left]{u}{\Ext{h}{k}} &[3em] \ul X_{0,h}^{k+1}
      \arrow{r}{}\arrow[bend left]{u}{\Ext{h}{k+1}} & \cdots
    \end{tikzcd}
  \end{equation}
  The reduction $\Red{h}{k}:\ul X_{r,h}^k\to \ul X_{0,h}^k$ is defined taking the average of components on the cells of dimension $k$ (recall that vectors in $\ul X_{0,h}^k$ only have components on these cells): For all $\ul\omega_h\in\ul X_{r,h}^k$,
  \begin{equation}\label{def:RdE}
    \Red{h}{k}\ul\omega_h = (\lproj{0}{f}{0}\omega_f)_{f\in\Delta_k(\Mh)}.
  \end{equation}
  The extension $\Ext{h}{k}:\ul X_{0,h}^k\to\ul X_{r,h}^k$ is defined by induction on the cell dimension:
  For all $\ul\eta_h\in\ul X_{0,h}^k$, $\Ext{h}{k}\ul\eta_h\coloneq(\fExt{f}{k}\ul\eta_f)_{f\in\Delta_d(\Mh),\,d\in[k,n]}$, where
  \begin{subequations}
    \begin{itemize}
    \item If $d=k$,
      \begin{equation}\label{def:Ext.d=k}
        \fExt{f}{k}\ul\eta_f=\eta_f\in \Poly{0}{-}\Lambda^0(f)\subset \Poly{r}{-}\Lambda^0(f);
      \end{equation}
    \item If $d\ge k+1$, $\fExt{f}{k}\ul\eta_f\in\Poly{r}{-}\Lambda^{d-k}(f)$ satisfies
      \begin{multline}\label{def:Ext.d>=k+1}
        (-1)^{k+1}
        \int_f \star^{-1}\fExt{f}{k}\ul\eta_f\wedge (\rd\mu+\nu)
        \\
        =\int_f \rd_{0,f}^k\ul\eta_f\wedge\mu
        -\int_{\partial f}\Pot{r}{\partial f}{k}\Ext{\partial f}{k}\ul\eta_{\partial f}\wedge\tr_{\partial f}\mu
        +(-1)^{k+1}\int_f \Pot{0}{f}{k}\ul\eta_f\wedge\nu
        \\
        \forall (\mu,\nu)\in \Koly{r}{d-k-1}(f)\times\Koly{r}{d-k}(f),
      \end{multline}
      where $\Ext{\partial f}{k}\ul\eta_{\partial f}=(\Ext{f'}{k}\ul\eta_{f'})_{f'\in\Delta_{d-1}(f)}$ gathers the extensions already built at previous steps on the subcells of dimension $d-1$ of $f$. The isomorphism \eqref{eq:isomorphism.Prtrimmed.koszul} with $\ell = d - k\ge 1$ ensures that the relation above fully and properly defines $\fExt{f}{k}\ul\eta_f$.
    \end{itemize}
  \end{subequations}
  Extensions are designed in such a way that, for all $f\in\Delta_d(\Mh)$ with $d\ge k+1$,
  \[
  \int_f \rd_{r,f}^k\Ext{f}{k}\ul\eta_f\wedge \mu
  = \int_f \rd_{0,f}^k\ul\eta_f\wedge\mu\qquad\forall \mu\in \Koly{r}{d-k-1}(f),
  \]
  as can be checked combining the definitions \eqref{eq:discrete.exterior.derivative:ddr} of $\rd_{r,f}^k\Ext{f}{k}\ul\eta_f$ and \eqref{def:Ext.d>=k+1} of $\fExt{f}{k}\ul\eta_f$ (with $\nu=0$).
  Since $\rd_{0,f}^k\ul\eta_f=\Pot{0}{f}{k+1}\ud_{0,f}^k\ul\eta_f$ by \eqref{eq:link.pot.diff}, using the definition of $\fExt{f}{k+1}\ud_{0,f}^k\ul\eta_f$ (namely, \eqref{def:Ext.d=k} if $d=k+1$, or \eqref{def:Ext.d>=k+1} with $(\mu,\nu)\gets(0,\mu)$ if $d\ge k+2$) we deduce that
  \begin{equation}\label{eq:dE.P0d0}
    \int_f \rd_{r,f}^k\Ext{f}{k}\ul\eta_f\wedge \mu =
    \int_f \star^{-1}\fExt{f}{k+1}\ud_{0,f}^k\ul\eta_f\wedge\mu\qquad\forall \mu\in \Koly{r}{d-k-1}(f).
  \end{equation}
  \medskip

  \noindent\underline{\emph{Step 2: Proof of the theorem.}}
  To apply \cite[Proposition 2]{Di-Pietro.Droniou:23*2}, we need to prove that
  \begin{equation}\label{eq:RdE}
    \ud_{0,h}^k=\Red{h}{k+1}\ud_{r,h}^k\Ext{h}{k}
  \end{equation}
  and that \cite[Assumption 1]{Di-Pietro.Droniou:23*2} holds, that is:
  \begin{itemize}
  \item[(C1)] $\Red{h}{k}\Ext{h}{k}={\rm Id}$ on $\Ker\ud_{0,h}^k$;
  \item[(C2)] $(\Ext{h}{k}\Red{h}{k}-{\rm Id})(\Ker\ud_{r,h}^k)\subset \Image\ud_{r,h}^{k-1}$;
  \item[(C3)] The graded maps $\Ext{h}{\bullet}$ and $\Red{h}{\bullet}$ are cochain maps.
  \end{itemize}

  We start by noticing that, since \DDR{0} is already known to be a complex, (C1) and (C3) imply \eqref{eq:RdE}.
  Indeed, (C3) gives $\Red{h}{k+1}\ud_{r,h}^k\Ext{h}{k}=\Red{h}{k+1}\Ext{h}{k+1}\ud_{0,h}^{k}$ and, by the complex property, $\Image\ud_{0,h}^k\subset\Ker\ud_{0,h}^{k+1}$, so (C1) applied to $k+1$ instead of $k$ yields \eqref{eq:RdE}.
  \medskip\\
  \underline{1. \emph{Proof of (C1).}}
  The definitions \eqref{def:RdE} and \eqref{def:Ext.d=k} of the reduction and the extension components on the lowest dimensional cells directly shows that $\Red{h}{k}\Ext{h}{k}={\rm Id}$ on $\ul X_{r,h}^k$, which establishes a stronger result than (C1).
  \medskip\\
  \underline{2. \emph{Proof of (C3) for the extension.}}
  We now turn to (C3), considering first the case of the extension. We have to show that, for all $\ul\eta_h\in\ul X_{0,h}^k$ it holds $\ud_{r,h}^k\Ext{h}{k}\ul\eta_h=\Ext{h}{k+1}\ud_{0,h}^k\ul\eta_h$.
  Given the definitions \eqref{eq:global.discrete.exterior.derivative} of the global discrete exterior derivative and of the extension, this boils down to showing that
  \begin{equation*}
    \star^{-1}\trimproj{r}{f}{d-k-1}(\star \rd_{r,f}^k\Ext{f}{k}\ul\eta_f)=\star^{-1}\fExt{f}{k+1}\ud_{0,f}^k\ul\eta_f
    \qquad\forall f\in\Delta_d(\Mh)\mbox{ with }d\ge k+1,
  \end{equation*}
  which, testing against $\rho\in\Poly{r}{-}\Lambda^{d-k-1}(f)$ and recalling the relation \eqref{eq:remove.projector}, can be recast as
  \begin{multline}\label{eq:Ext.cochain}
    \int_f \rd_{r,f}^k\Ext{f}{k}\ul\eta_f\wedge \rho
    = \int_f \star^{-1}\fExt{f}{k+1}\ud_{0,f}^k\ul\eta_f\wedge\rho
    \\
    \forall f\in\Delta_d(\Mh)\mbox{ with }d\ge k+1,\,
    \forall\rho\in\Poly{r}{-}\Lambda^{d-k-1}(f).
  \end{multline}
  We start by noticing that, by \eqref{eq:dE.P0d0}, the relation \eqref{eq:Ext.cochain} holds for $\rho\in\Koly{r}{d-k-1}(f)$.
  The decompositions \eqref{eq:decomposition.Pr:ell=0} of $\Poly{r}{}\Lambda^0(f)=\Poly{r}{-}\Lambda^0(f)$ (if $d=k+1$) and \eqref{def:trimmed.spaces.bis}  of $\Poly{r}{-}\Lambda^{d-k-1}(f)$ (if $d\ge k+2$) then show that we only have to prove \eqref{eq:Ext.cochain} for $\rho\in\Poly{0}{}\Lambda^0(f)$ (if $d=k+1$) or $\rho\in \rd\Koly{r}{d-k-2}(f)$ (if $d\ge k+2$).
  This fact is proved by induction on $d$:

  \begin{itemize}
  \item Let us first consider $d=k+1$ and take $\rho\in\Poly{0}{}\Lambda^0(f)$. We can use this polynomial form as a test function in the definition \eqref{eq:discrete.exterior.derivative:ddr} of $\rd_{0,f}^k\ul\eta_f$ to get
    \begin{equation}\label{eq:Ext.cochain.P0}
      \int_{\partial f}\Pot{0}{\partial f}{k}\ul\eta_{\partial f}\wedge\tr_{\partial f}\rho=\int_f \rd_{0,f}^k\ul\eta_f\wedge\rho=
      \int_f \star^{-1}\fExt{f}{k+1}\ud_{0,f}^k\ul\eta_f\wedge\rho
      \qquad\forall\rho\in\Poly{0}{}\Lambda^0(f),
    \end{equation}
    where the second equality follows from \eqref{def:Ext.d=k} with $(k,\ul\eta_f)\gets({k+1},\ud_{0,f}^k\ul\eta_f)$.
    For all $f'\in\Delta_k(f)$, by definition \eqref{eq:discrete.potential:d=k} of $\Pot{0}{f'}{k}$ and \eqref{def:Ext.d=k} of $\fExt{f'}{k}$, we have $\Pot{0}{f'}{k}\ul\eta_{f'}=\star^{-1}\eta_{f'}=\star^{-1}\fExt{f'}{k}\ul\eta_{f'}=\Pot{r}{f'}{k}\Ext{f'}{k}\ul\eta_{f'}$, where the last relation follows applying the definition \eqref{eq:discrete.potential:d=k} of $\Pot{r}{f'}{k}$.
    We infer from this equality and \eqref{eq:Ext.cochain.P0} that
    \begin{equation*}
      \int_{\partial f}\Pot{r}{\partial f}{k}\Ext{\partial f}{k}\ul\eta_{\partial f}\wedge\tr_{\partial f}\rho=\int_f \star^{-1}\fExt{f}{k+1}\ud_{0,f}^k\ul\eta_f\wedge\rho
      \qquad\forall\rho\in\Poly{0}{}\Lambda^0(f).
    \end{equation*}
    Applying the definition \eqref{eq:discrete.exterior.derivative:ddr} of $\rd_{r,f}^k\Ext{f}{k}\ul\eta_{\partial f}$ with $\mu=\rho$ (which satisfies $\rd\rho=0$) to the left-hand side then concludes the proof of \eqref{eq:Ext.cochain}.
  \item We now take $d\ge k+2$ and $\rho\in \rd\Koly{r}{d-k-2}(f)$, which we write $\rho=\rd\alpha$ with $\alpha\in \Koly{r}{d-k-2}(f)\subset \Poly{r}{-}\Lambda^{d-k-2}(f)$. Applying the link \eqref{eq:link.discrete.exterior.derivatives} between discrete exterior derivatives on $f$ and $\partial f$, we have
    \begin{equation}\label{eq:Ext.cochain.k+2}
      \int_f  \rd_{r,f}^k\Ext{f}{k}\ul\eta_f\wedge \rho=(-1)^{k+1}\int_{\partial f}\rd_{r,\partial f}^k\Ext{\partial f}{k}\ul\eta_{\partial f}\wedge\tr_{\partial f}\alpha.
    \end{equation}
    By Lemma \ref{lem:traces.trimmed}, for all $f'\in\Delta_{d-1}(f)$, $\tr_{f'}\alpha\in\Poly{r}{-}\Lambda^{d-k-2}(f')$, so we can apply \eqref{eq:Ext.cochain} on $f'$ (by the induction hypothesis) to get
    \[
    \int_{f'}\rd_{r,f'}^k\Ext{f'}{k}\ul\eta_{f'}\wedge\tr_{f'}\alpha
    =\int_{f'}\star^{-1}\fExt{f'}{k+1}\ud_{0,f'}^k\ul\eta_{f'}\wedge\tr_{f'}\alpha
    =\int_{f'}\Pot{r}{f'}{k+1}\Ext{f'}{k+1}\ud_{0,f'}^k\ul\eta_{f'}\wedge\tr_{f'}\alpha,
    \]
    the second equality being justified by \eqref{eq:proj.potential} and \eqref{eq:remove.projector} (with $(\mathcal X,f,d,k)\gets (\Poly{r}{-}\Lambda^{(d-1)-(k+1)}(f'), f', d-1, k+1)$) and the fact that $\tr_{f'}\alpha\in \Poly{r}{-}\Lambda^{(d-1)-(k+1)}(f')$. Plugging this relation into \eqref{eq:Ext.cochain.k+2} yields
    \[
    \int_f  \rd_{r,f}^k\Ext{f}{k}\ul\eta_f\wedge \rho=(-1)^{k+1}\int_{\partial f}\Pot{r}{\partial f}{k+1}\Ext{\partial f}{k+1}\ud_{0,\partial f}^k\ul\eta_{\partial f}\wedge\tr_{\partial f}\alpha.
    \]
    Invoking then the definition \eqref{def:Ext.d>=k+1} of $\fExt{f}{k+1}\ud_{0,f}^k\ul\eta_f$ with $(k,\mu,\nu,\ul\eta_f)\gets(k+1,\alpha,0,\ud_{0,f}\ul\eta_f)$, and using the property $\rd_{0,f}^{k+1}\circ\ud_{0,f}^k=0$ (consequence of \eqref{eq:complex.prop} with $k+1$ instead of $k$) we infer
    \[
    \int_f  \rd_{r,f}^k\Ext{f}{k}\ul\eta_f\wedge \rho=\int_f \star^{-1}\fExt{f}{k+1}\ud_{0,f}^k\ul\eta_f\wedge\rd\alpha
    \]
    and \eqref{eq:Ext.cochain} follows by recalling that $\rho=\rd\alpha$.
  \end{itemize}
  \underline{3. \emph{Proof of (C3) for the reduction.}}
  To conclude the proof of (C3), it remains to show that $\Red{h}{k+1}\ud_{r,h}^k\ul\omega_h=\ud_{0,h}^k\Red{h}{k}\ul\omega_h$ for all $\ul\omega_h\in\ul X_{r,h}^k$. Since vectors in $\ul X_{0,h}^{k+1}$ only have constant components on cells of dimension $k+1$, and since $\Red{h}{k+1}$ is defined by \eqref{def:RdE}, we only have to show that
  \begin{equation}\label{eq:Red.cochain}
    \int_f \rd_{r,f}^k\ul\omega_f\wedge\rho = \int_f \rd_{0,f}^k\Red{f}{k}\ul\omega_f\wedge\rho\qquad\forall f\in\Delta_{k+1}(\Mh)\,,\;\forall
    \rho\in\Poly{0}{}\Lambda^0(f).
  \end{equation}
  Let $\rho$ as above and apply the definition \eqref{eq:discrete.exterior.derivative:ddr} of $\rd_{r,f}^k\ul\omega_f$ to $\mu=\rho$; accounting for $\rd\rho=0$, we obtain
  \begin{equation}\label{eq:C3.red.1}
    \int_f \rd_{r,f}^k\ul\omega_f\wedge\rho = \int_{\partial f}\Pot{r}{\partial f}{k}\ul\omega_{\partial f}\wedge\tr_{\partial f}\rho.
  \end{equation}
  For each $f'\in\Delta_k(f)$, by definition \eqref{eq:discrete.potential:d=k} of $\Pot{r}{f'}{k}$, we can write
  \begin{equation}\label{eq:C3.red.2}
    \int_{f'}\Pot{r}{f'}{k}\ul\omega_f\wedge\tr_{f'}\rho
    = \int_{f'}\star^{-1}\omega_f\wedge\tr_{f'}\rho
    = \int_{f'}\star^{-1}\lproj{0}{f}{0}\omega_f\wedge\tr_{f'}\rho
    = \int_{f'}\Pot{0}{f'}{k}\Red{f'}{k}\ul\omega_{f'}\wedge\tr_{f'}\rho,
  \end{equation}
  where we have used the fact that $\tr_{f'}\rho\in\Poly{0}{}\Lambda^0(f')$ to insert the projector in the second equality
  and the definitions \eqref{def:RdE} of $\Red{f'}{k}$ and \eqref{eq:discrete.potential:d=k} of $\Pot{0}{f'}{k}$ to conclude. Combining \eqref{eq:C3.red.1} and \eqref{eq:C3.red.2}, we find
  \[
  \int_f \rd_{r,f}^k\ul\omega_f\wedge\rho
  = \int_{\partial f}\Pot{0}{\partial f}{k}\Red{\partial f}{k}\ul\omega_{\partial f}\wedge\tr_{\partial f}\rho.
  \]
  Applying the definition \eqref{eq:discrete.exterior.derivative:ddr} of $\rd_{0,f}^k\Red{f}{k}\ul\omega_f$ then concludes the proof of \eqref{eq:Red.cochain}.
  \medskip\\
  \underline{4. \emph{Proof of (C2).}}
  Finally, to prove (C2), we notice that if $\ul\omega_h\in\ul X_{r,h}^k$, then by \eqref{def:RdE} and \eqref{def:Ext.d=k} the components of $\Ext{h}{k}\Red{h}{k}\ul\omega_h$ on the lowest dimensional cells $f\in\Delta_k(\Mh)$ are just the averages of the components of $\ul\omega_h$ on these cells; hence, $\Ext{h}{k}\Red{h}{k}\ul\omega_h-\ul\omega_h\in\ul X_{r,h,\flat}^k$.
  Moreover, by the cochain map property (C3), $\ud_{r,h}^k(\Ext{h}{k}\Red{h}{k}\ul\omega_h-\ul\omega_h)=\Ext{h}{k}\Red{h}{k}\ud_{r,h}^k\ul\omega_h-\ud_{r,h}^k\ul\omega_h= \ul 0$ whenever $\ul\omega_h\in\Ker\ud_{r,h}^k$. We can thus, for such an $\ul\omega_h$, apply Lemma \ref{lem:exact.Xsharp} with $\ul\omega_h \gets \Ext{h}{k}\Red{h}{k}\ul\omega_h-\ul\omega_h$ to see that this element belongs to $\Image\ud_{r,h}^{k-1}$, establishing (C2).
\end{proof}


\section{A VEM-inspired complex}\label{sec:vem}

In this section we consider an alternative construction inspired by the Virtual Element complex of~\cite{Beirao-da-Veiga.Brezzi.ea:18}.
This complex hinges on Koszul complements, unlike the one of \cite{Beirao-da-Veiga.Brezzi.ea:16}, which was based on orthogonal complements (as noticed in \cite{Di-Pietro.Droniou:23*1}, the latter are less natural to prove analytical properties).
Notice that we make here no effort to reduce the polynomial degree of certain components of the discrete spaces, which is known to be possible; see, e.g., \cite{Beirao-da-Veiga.Brezzi.ea:18*1} and also \cite{Di-Pietro.Droniou:23*2} for a general framework with application to DDR methods.
Notice also that we work in a fully discrete spirit, without attempting to identify the underlying virtual spaces (which are not needed for the purposes of the present work).

Let again a polynomial degree $r\ge 0$ be fixed.
The general principle to design the VEM-inspired sequence is to select polynomial components that make it possible to reconstruct, for each $d$-cell and inductively  on the dimension $d$, a discrete potential capable of reproducing polynomial forms in $\Poly{r+1}{-}\Lambda^k(f)$.
The main difference with respect to the DDR approach illustrated in Section \ref{sec:ddr} is that, with the exception of $(k+1)$-cells, the required information on the discrete exterior derivative is directly encoded in the discrete spaces.

Adopting this approach has several, far-reaching, consequences.
The first one is that the discrete spaces contain a mix of both traces and exterior derivatives (which, in passing, requires higher regularity on the domains of the interpolators).
The components on $k$- and $(k+1)$-cells in the discrete space of $k$-forms play a slightly different role than the others (and are, as a result, treated separately in the definition of the space).
The second consequence is that the proofs of key properties (polynomial consistency, cohomology, etc.) are carried out by induction on the dimension (and not on the difference between the dimension and the form degree, as in Theorems \ref{thm:polynomial.consistency:ddr} and \ref{thm:link.pot.diff}).
This leads to somewhat simpler arguments, at the cost of larger discrete spaces.
Also, the commutation property is essentially obtained by definition of the local discrete exterior derivative (with the exception of lowest-dimensional cells).

\subsection{Definition}

\subsubsection{Discrete spaces}

We define the following discrete counterpart of $H\Lambda^k(\Omega)$, $0\le k\le n$:
\begin{multline}\label{eq:spaces:vem}
  \ul V_{r,h}^k
  \coloneq
  \bigtimes_{f\in\Delta_k(\Mh)}\Poly{r}{}\Lambda^0(f)
  \times
  \bigtimes_{f\in\Delta_{k+1}(\Mh)}\left(\Koly{r+1}{1}(f)\times\Koly{r}{0}(f)\right)
  \\
  \times
  \bigtimes_{d=k+2}^n\bigtimes_{f\in\Delta_d(\Mh)}\left(\Koly{r+1}{d-k}(f)\times\Koly{r+1}{d-k-1}(f)\right).
\end{multline}
Notice that, on $(k+1)$-cells, the second component has polynomial degree reduced by one compared to $d$-cells with $d\ge k+2$, i.e., we have $\Koly{r}{0}(f)$ instead of $\Koly{r+1}{0}(f)$.
A generic element of $\ul V_{r,h}^k$ will be denoted by
\begin{equation}\label{eq:vem:omega.h}
  \ul\omega_h = \big( (\omega_f)_{f\in\Delta_k(\Mh)}, (\omega_f, D_{\omega,f})_{f\in\Delta_d(\Mh),\,d\in[k+1,n]}\big).
\end{equation}
The notation $D_{\omega,f}$ is reminiscent of the fact that these polynomial components are interpreted as Hodge stars of exterior derivatives.
We refer to Table \ref{tab:local.spaces:vem} for an overview of the polynomial unknowns in $\ul V_{r,f}^k$ in dimensions 0 to 3, as well as their vector proxies.

\begin{table}\centering
  \renewcommand{\arraystretch}{1.3}
  \begin{tabular}{c|cccc}
    \toprule
    \diagbox[font=\footnotesize]{$k$}{$d$}  & $0$ & $1$ & $2$ & $3$ \\
    \midrule
    0
    & $\Real = \Poly{r}{}\Lambda^0(f_0)$
    & $\{0\}\times\Koly{r}{0}(f_1)$
    & $\{0\}\times\Koly{r+1}{1}(f_2)$
    & $\{0\}\times\Koly{r+1}{2}(f_3)$
    \\
    1
    &
    & $\Poly{r}{}\Lambda^0(f_1)$
    & $\Koly{r+1}{1}(f_2) \times \Koly{r}{0}(f_2)$
    & $\Koly{r+1}{2}(f_3) \times \Koly{r+1}{1}(f_3)$
    \\
    2
    &
    &
    & $\Poly{r}{}\Lambda^0(f_2)$
    & $\Koly{r+1}{1}(f_3) \times \Koly{r}{0}(f_3)$
    \\
    3 & & & & $\Poly{r}{}\Lambda^0(f_3)$ \\
    \midrule[1pt]
    \diagbox[font=\footnotesize]{$k$}{$d$}  & $0$ & $1$ & $2$ & $3$ \\
    \midrule
    0
    & $\Real = \Poly{r}{}(f_0)$
    & $\{0\}\times\Poly{r}{\flat}(f_1)$
    & $\{0\}\times\cRoly{r+1}(f_2)$
    & $\{0\}\times\cRoly{r+1}(f_3)$ \\
    1
    &
    & $\Poly{r}{}(f_1)$
    & $\cRoly{r+1}(f_2)\times\Poly{r}{\flat}(f_2)$
    & $\cRoly{r+1}(f_3)\times\cGoly{r+1}(f_3)$ \\
    2
    &
    &
    & $\Poly{r}{}(f_2)$ & $\cGoly{r+1}(f_3)\times\Poly{r}{\flat}(f_3)$ \\
    3 & & & & $\Poly{r}{}(f_3)$ \\
    \bottomrule
  \end{tabular}
  \caption{Polynomial components attached to each mesh entity $f_d$ of dimension $d\in\{0,\ldots,3\}$ for the space $\ul V_{r,h}^k$ for $k\in\{0,\ldots,3\}$ (top) and counterparts through vector proxies (bottom).\label{tab:local.spaces:vem}}
\end{table}

\subsubsection{Interpolators}

For all integers $0\le k\le d\le n$ and any $f\in\Delta_d(\Mh)$, the local interpolator is such that, for all $\omega\in C^1\Lambda^k(\overline{f})$,
\begin{equation}\label{eq:interpolator:vem}
  \begin{aligned}
    \ul I_{r,f}^k\omega
    \coloneq\Big(
    &\big(\lproj{r}{f'}{0}(\star\tr_{f'}\omega)\big)_{f'\in\Delta_k(f)},
    \\
    &\big(
    \kproj{r+1}{f'}{1}(\star\tr_{f'}\omega),
    \kproj{r}{f'}{0}(\star\tr_{f'}\rd\omega)\big)_{f'\in\Delta_{k+1}(f)}
    \big),
    \\
    &\big(
    \kproj{r+1}{f'}{d'-k}(\star\tr_{f'}\omega),
    \kproj{r+1}{f'}{d'-k-1}(\star\tr_{f'}\rd\omega)\big)_{f'\in\Delta_{d'}(f),\,d'\in[k+2,d]}
    \Big).
  \end{aligned}
\end{equation}

\begin{remark}[Domain of the interpolator]
  Owing to the presence of polynomial components that are interpreted as exterior derivatives (compare \eqref{eq:interpolator} with \eqref{eq:interpolator:vem}), the interpolator in the VEM-inspired construction requires higher regularity of the interpolated functions compared to the DDR complex presented in Section \ref{sec:ddr}, namely $C^1\Lambda^k(\overline{f})$ instead of $C^0\Lambda^k(\overline{f})$.
\end{remark}

\subsubsection{Global discrete exterior derivative and VEM complex}

For all $f\in\Delta_{k+1}(\Mh)$, we define the \emph{discrete exterior derivative} $\rd_{r,f}^k:\ul V_{r,f}^k\to\Poly{r}{}\Lambda^{k+1}(f)$ such that, for all $\ul\omega_f\in\ul V_{r,f}^k$,
\begin{multline}\label{eq:local.exterior.derivative:vem}
  \int_f\rd_{r,f}^k\ul\omega_f\wedge(\mu+\nu)
  = \int_{\partial f}\star^{-1}\omega_{\partial f}\wedge\tr_{\partial f}\mu
  +\int_f\star^{-1}D_{\omega,f}\wedge\nu
  \\
  \forall(\mu,\nu)\in\Poly{0}{}\Lambda^0(f)\times\Koly{r}{0}(f),
\end{multline}
where, as before, $\omega_{\partial f}$ is defined by $(\omega_{\partial f})_{|f'}=\omega_{f'}\in \Poly{r}{}\Lambda^0(f')$ for all $f'\in\Delta_k(f)$.
Notice that the above equation defines $\rd_{r,f}^k\ul\omega_f$ uniquely since, by \eqref{eq:decomposition.Pr:ell=0}, $\mu + \nu$ spans $\Poly{r}{}\Lambda^0(f)$ as $(\mu,\nu)$ spans $\Poly{0}{}\Lambda^0(f)\times\Koly{r}{0}(f)$.
Moreover, taking $\mu = 0$ and letting $\nu$ span $\Koly{r}{0}(f)$, we infer, using \eqref{eq:remove.projector} with $(\mathcal X,\omega,\mu)\gets(\Koly{r}{0}(f),\rd_{r,f}^k\ul \omega_f,\nu)$,
\begin{equation}\label{eq:Domegaf=proj.drfk}
  D_{\omega,f} = \kproj{r}{f}{0}(\star\rd_{r,f}^k\ul\omega_f)\qquad\forall f\in\Delta_{k+1}(\Mh).
\end{equation}

Unlike the DDR complex, the construction of a \emph{global discrete exterior derivative} for the VEM complex does not require to first reconstruct traces on lower-dimensional cells, as all the necessary information is encoded in the polynomial components $(D_{\omega,f})_{f\in\Delta_d(\Mh),\,d\in[k+2,n]}$
supplemented by $(\rd_{r,f}^k\ul\omega_f)_{f\in\Delta_{k+1}(\Mh)}$.
More specifically, for all integers $k\in[0,n-1]$, we let $\ud_{r,h}^k:\ul V_{r,h}^k\to\ul V_{r,h}^{k+1}$ be such that, for all $\ul\omega_h\in\ul V_{r,h}^k$,
\begin{equation}\label{eq:discrete.exterior.derivative:vem}
  \ud_{r,h}^k\ul\omega_h\coloneq\big(
  (\star\rd_{r,f}^k\ul\omega_f)_{f\in\Delta_{k+1}(\Mh)},
  (D_{\omega,f},0)_{f\in\Delta_d(\Mh),\,d\in[k+2,n]}
  \big)
\end{equation}
(compare with \eqref{eq:vem:omega.h} and notice the different positioning, compared to $\ul\omega_h$, of the polynomial components $D_{\omega,f}$).
As for the DDR complex, we will denote by $\ud_{r,f}^k$ the restriction of $\ud_{r,h}^k$ to $f\in\Delta_d(\Mh)$ with $d\in [0,n]$ such that $k\le d-1$.

The VEM sequence of spaces and operators then reads
\begin{equation}\label{eq:vem}
  \begin{tikzcd}
    \VEM{r}\coloneq
    \{0\} \arrow{r}{}
    &\ul V_{r,h}^0 \arrow{r}{\ud_{r,h}^0}
    &\ul V_{r,h}^1 \arrow{r}{}
    &\cdots \arrow{r}{}
    &\ul V_{r,h}^{n-1} \arrow{r}{\ud_{r,h}^{n-1}}
    &\ul V_{r,h}^{n} \arrow{r}{}
    & \{0\}.
  \end{tikzcd}
\end{equation}

\subsubsection{Local discrete potentials and discrete exterior derivatives}

Given a form degree $k\in[0,n]$, for all $f\in\Delta_d(\Mh)$, $k\le d\le n$, we define the \emph{local discrete potential} $\Pot{r}{f}{k}:\ul V_r^k(f)\to\Poly{r+1}{-}\Lambda^k(f)$ by induction on $d$ as follows:
For all $\ul\omega_f\in\ul V_{r,f}^k$,
\begin{itemize}
\item If $d=k$, we simply set
  \begin{equation}\label{eq:vem:discrete.potential:d=k}
    \Pot{r}{f}{k}\ul\omega_f \coloneq \star^{-1}\omega_f
    \in\Poly{r}{}\Lambda^d(f) = \Poly{r+1}{-}\Lambda^d(f),
  \end{equation}
  where the last equality follows from \eqref{eq:trimmed.spaces:0-cells} if $d=0$ (after noticing that $\Poly{r}{}\Lambda^d(f)\cong\Real\cong\Poly{r+1}{-}\Lambda^d(f)$) and from \eqref{eq:trimmed.k=d} if $d\ge 1$;
\item If $k+1\le d\le n$, using the isomorphism \eqref{eq:isomorphism.Prtrimmed.koszul} with $\ell = d-k\ge 1$ and $r$ replaced by $r+1$, we define $\Pot{r}{f}{k}\ul\omega_f\in\Poly{r+1}{-}\Lambda^k(f)$ as the unique solution of the following equation:
  \begin{multline}\label{eq:vem:discrete.potential:d>=k+1}
    (-1)^{k+1}\int_f \Pot{r}{f}{k}\ul\omega_f\wedge(\rd\mu + \nu)
    \\
    = \int_f\star^{-1}\widetilde{D}_{\omega,f}\wedge\mu
    - \int_{\partial f} P_{r,\partial f}^k\ul\omega_{\partial f}\wedge\tr_{\partial f}\mu
    + (-1)^{k+1}\int_f\star^{-1}\omega_f\wedge\nu
    \\
    \forall (\mu,\nu)\in\Koly{r+1}{d-k-1}(f)\times\Koly{r+1}{d-k}(f),
  \end{multline}
  where
  \begin{equation}\label{eq:tilde.D.omega.f}
    \widetilde{D}_{\omega,f} \coloneq \begin{cases}
      \star\rd_{r,f}^k\ul\omega_f & \text{if $d=k+1$,}
      \\
      D_{\omega,f} & \text{if $d\ge k+2$,}
    \end{cases}
  \end{equation}
  and we have introduced the piecewise polynomial boundary potential $P_{r,\partial f}^k:\ul V_{r,\partial f}^k\to\Lambda^k(\partial f)$ such that $(P_{r,\partial f}^k)_{|f'}\coloneq P_{r,f'}^k$ for all $f'\in\Delta_{d-1}(f)$.
\end{itemize}
Leveraging the above-defined discrete potentials, we can define the \emph{discrete exterior derivative} $\rd_{r,f}^k:\ul V_{r,f}^k\to \Poly{r+1}{-}\Lambda^{k+1}(f)$ for all $f\in\Delta_d(\Mh)$, $k+2\le d\le n-1$, setting:
\begin{equation}\label{eq:discrete.exterior.derivative:d>=k+1:vem}
  \rd_{r,f}^k\ul\omega_f
  \coloneq P_{r,f}^{k+1}\ud_{r,f}^k\ul\omega_f
  \qquad\forall \ul\omega_f\in\ul V_{r,f}^k.
\end{equation}
These discrete exterior derivatives, which were previously only defined for $d=k+1$ (see \eqref{eq:local.exterior.derivative:vem}), are not relevant in the definition of the VEM complex, but may be useful in practical applications.

\subsection{Main properties of the VEM complex}

The main results for the VEM complex are stated below.

\begin{theorem}[Cohomology of the VEM complex]\label{thm:cohomology:vem}
  The VEM sequence \eqref{eq:vem} is a complex and its cohomology is isomorphic to the cohomology of the continuous de Rham complex \eqref{eq:diff.de.rham}.
\end{theorem}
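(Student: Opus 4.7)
The plan is to adapt the strategy developed for Theorem~\ref{thm:cohomology:ddr} to the VEM-inspired setting. The argument splits into two parts: establishing the complex property $\ud_{r,h}^{k+1}\circ\ud_{r,h}^k=0$, then establishing the cohomology isomorphism via a reduction/extension argument.

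First, I would prove the complex property by direct inspection of~\eqref{eq:discrete.exterior.derivative:vem}. For $\ul\omega_h\in\ul V_{r,h}^k$ and $\ul\eta_h\coloneq\ud_{r,h}^k\ul\omega_h\in\ul V_{r,h}^{k+1}$, the ``derivative-type'' component of $\ul\eta_h$ on any $d$-cell with $d\ge k+2$ vanishes by construction, so all components of $\ud_{r,h}^{k+1}\ul\eta_h$ except possibly the first component on $(k+2)$-cells are automatically zero. On a $(k+2)$-cell $f$, I would test~\eqref{eq:local.exterior.derivative:vem} against $(\mu,\nu)\in\Poly{0}{}\Lambda^0(f)\times\Koly{r}{0}(f)$: the volume term vanishes because $D_{\eta,f}=0$, while the boundary term reduces, after unfolding $\rd_{r,f'}^k\ul\omega_{f'}$ on each $(k+1)$-subface $f'$ of $f$ via~\eqref{eq:local.exterior.derivative:vem} with the constant test function $\tr_{f'}\mu$, to a double sum over $k$-subcells whose coefficients cancel by the standard cellular $\partial\circ\partial=0$ identity (as in the proof of Lemma~\ref{lem:link.diff.subcells}). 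Combined with the decomposition~\eqref{eq:decomposition.Pr:ell=0}, this forces $\rd_{r,f}^{k+1}\ul\eta_f=0$.

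Next, I would mirror the reduction/extension approach of Theorem~\ref{thm:cohomology:ddr}. The key preliminary ingredient is the analog of Lemma~\ref{lem:exact.Xsharp} for the subspace
\[
\ul V_{r,h,\flat}^k\coloneq\left\{\ul\omega_h\in\ul V_{r,h}^k\,:\,\int_f\star^{-1}\omega_f=0\quad\forall f\in\Delta_k(\Mh)\right\},
\]
namely that any $\ul\eta_h\in\ul V_{r,h,\flat}^k\cap\Ker\ud_{r,h}^k$ admits a primitive in $\ul V_{r,h,\flat}^{k-1}$. The construction turns out to be quite direct, since the ``derivative-type'' components of $\ul\eta_h$ live in precisely the Koszul spaces that appear as ``derivative-type'' components of $\ul V_{r,h}^{k-1}$: setting $D_{\omega,f}\coloneq\eta_f$ on every cell of dimension $\ge k$, $\omega_f\coloneq 0$ on $(k-1)$-cells, and $\omega_f\coloneq 0$ on higher-dimensional cells (where this component does not enter $\ud_{r,h}^{k-1}\ul\omega_h$) yields a valid candidate. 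The nontrivial verification is $\star\rd_{r,f}^{k-1}\ul\omega_f=\eta_f$ on $k$-cells, which is established by testing~\eqref{eq:local.exterior.derivative:vem} separately against the two summands of~\eqref{eq:decomposition.Pr:ell=0}: the constant piece vanishes by the flat condition on $\eta_f$, while the Koszul-complement piece enforces the identity $\eta_f=D_{\omega,f}$ we have prescribed. The requirement $D_{\eta,f}=0$ on cells of dimension $\ge k+1$ follows either from~\eqref{eq:Domegaf=proj.drfk} combined with $\rd_{r,f}^k\ul\eta_f=0$ (on $(k+1)$-cells) or directly from the vanishing of the corresponding component of $\ud_{r,h}^k\ul\eta_h$ (on higher-dimensional cells).

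With the exactness of $(\ul V_{r,h,\flat}^\bullet,\ud_{r,h}^\bullet)$ in hand, I would replicate the diagram~\eqref{eq:diagram.coho} using a reduction $\Red{h}{k}:\ul V_{r,h}^k\to\ul X_{0,h}^k$ defined by $\Red{h}{k}\ul\omega_h\coloneq(\lproj{0}{f}{0}\omega_f)_{f\in\Delta_k(\Mh)}$, together with an extension $\Ext{h}{k}:\ul X_{0,h}^k\to\ul V_{r,h}^k$ constructed inductively on cell dimension in order to match the lowest-order exterior derivative $\rd_{0,f}^k$ (via~\eqref{eq:local.exterior.derivative:vem} at the $(k+1)$-cell layer and its analogs at higher dimensions). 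The verification of \cite[Assumption~1]{Di-Pietro.Droniou:23*2} then proceeds in close parallel with the DDR proof: $\Red{h}{k}\Ext{h}{k}=\mathrm{Id}$ is immediate from the construction; the inclusion $(\Ext{h}{k}\Red{h}{k}-\mathrm{Id})(\Ker\ud_{r,h}^k)\subset\Image\ud_{r,h}^{k-1}$ follows by observing that the difference sits in $\ul V_{r,h,\flat}^k$ and invoking the exactness result above; and the cochain-map properties for $\Red{h}{\bullet}$ and $\Ext{h}{\bullet}$ are checked cell-by-cell using~\eqref{eq:local.exterior.derivative:vem} and~\eqref{eq:Domegaf=proj.drfk}. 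Since \DDR{0} is chain-isomorphic to the CW complex of $\Mh$ (as recalled in the proof of Theorem~\ref{thm:cohomology:ddr}), \cite[Proposition~2]{Di-Pietro.Droniou:23*2} then yields the desired isomorphism with the cohomology of the continuous de Rham complex. The hard part will be the cochain-map property for the extension: the mixed trace/derivative structure of~\eqref{eq:spaces:vem} requires careful bookkeeping across the transitional $(k+1)$-cell layer, where the Koszul degrees of the two components change (compare the second and third tiers in~\eqref{eq:spaces:vem}), and one must ensure that the inductive definition of $\Ext{f}{k}\ul\eta_f$ on higher-dimensional cells is compatible with the polynomial degrees appearing in both the domain and target spaces.
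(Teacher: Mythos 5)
Your proposal follows essentially the same route as the paper's proof: the complex property by direct inspection of \eqref{eq:discrete.exterior.derivative:vem} and cancellation of the double boundary sum, the exactness lemma for $\ul V_{r,h,\flat}^k$ with the same primitive (up to replacing $D_{\omega,f}=\eta_f$ on $k$-cells by $\kproj{r}{f}{0}\eta_f$, needed since that component of $\ul V_{r,h}^{k-1}$ lives in $\Koly{r}{0}(f)$ rather than $\Poly{r}{}\Lambda^0(f)$), and reduction/extension maps between $\VEM{r}$ and $\DDR{0}$ --- the key point, which you correctly identify, being that one must compare with $\DDR{0}$ rather than $\VEM{0}$. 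The only cosmetic difference is that the paper's extension \eqref{eq:Ext:vem} is an explicit formula projecting the $\DDR{0}$ potentials and exterior derivatives onto the relevant Koszul spaces rather than a fresh induction, so the cochain-map verification you anticipate as the hard part largely reduces to the identity $\Pot{0}{f}{k+1}\ud_{0,f}^k=\rd_{0,f}^k$.
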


\begin{proof}
  See Section \ref{sec:cohomology.VEM}.
\end{proof}

\begin{theorem}[Polynomial consistency of the discrete potential and exterior derivative] \label{thm:vem.poly.consistency}
  For all integers $0\le k\le d\le n$ and all $f\in\Delta_d(\Mh)$, it holds
  \begin{equation}\label{eq:discrete.potential:polynomial.consistency:vem}
    \Pot{r}{f}{k}\ul I_{r,f}^k\omega = \omega
    \qquad\forall\omega\in\Poly{r+1}{-}\Lambda^k(f),
  \end{equation}
  and, if $d\ge k+1$,
  \begin{equation}\label{eq:discrete.exterior.derivative:polycons:vem}
    \rd_{r,f}^k\ul I_{r,f}^k\omega = \rd\omega
    \qquad\forall\omega\in\Poly{r+1}{-}\Lambda^k(f).
  \end{equation}
\end{theorem}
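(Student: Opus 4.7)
My plan is to establish both identities by induction on the dimension $d$ of the cell $f$, and, for each fixed $d$, by reverse induction on the form degree $k\in[0,d]$. The need for the reverse induction on $k$ comes from the definition \eqref{eq:discrete.exterior.derivative:d>=k+1:vem} of $\rd_{r,f}^k$ at $d\ge k+2$, which depends on $\Pot{r}{f}{k+1}$ on the same cell $f$, so the polynomial consistency of $\rd_{r,f}^k$ at $(d,k)$ requires that of $\Pot{r}{f}{k+1}$ at $(d,k+1)$.

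The base case $d=k$ is immediate: by \eqref{eq:vem:discrete.potential:d=k} and \eqref{eq:interpolator:vem}, $\Pot{r}{f}{k}\ul I_{r,f}^k\omega=\star^{-1}\lproj{r}{f}{0}(\star\omega)$, and since $\omega\in\Poly{r+1}{-}\Lambda^k(f)=\Poly{r}{}\Lambda^k(f)$ (by \eqref{eq:trimmed.k=d} if $d\ge 1$, or \eqref{eq:trimmed.spaces:0-cells} if $d=0$), $\star\omega$ lies in $\Poly{r}{}\Lambda^0(f)$ and the projector acts as the identity.

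For the induction step at a cell of dimension $d\ge 1$, I fix $k\le d-1$ and first address \eqref{eq:discrete.exterior.derivative:polycons:vem}. When $d=k+1$, I start from the definition \eqref{eq:local.exterior.derivative:vem}. Using that, for each $f'\in\Delta_k(f)$, the trace $\tr_{f'}\omega$ lies in $\Poly{r+1}{-}\Lambda^k(f')=\Poly{r}{}\Lambda^k(f')$, so the component $\omega_{f'}=\lproj{r}{f'}{0}(\star\tr_{f'}\omega)$ of $\ul I_{r,f'}^k\omega$ equals $\star\tr_{f'}\omega$, I test \eqref{eq:local.exterior.derivative:vem} separately against $\mu\in\Poly{0}{}\Lambda^0(f)$ and $\nu\in\Koly{r}{0}(f)$. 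For the first, the Stokes formula \eqref{eq:ipp} (with $\rd\mu=0$) turns the boundary contribution into $\int_f\rd\omega\wedge\mu$; for the second, \eqref{eq:remove.projector} removes the Koszul projector inside $D_{\omega,f}=\kproj{r}{f}{0}(\star\rd\omega)$, yielding $\int_f\rd\omega\wedge\nu$. The decomposition \eqref{eq:decomposition.Pr:ell=0} then identifies $\rd_{r,f}^k\ul I_{r,f}^k\omega$ with $\rd\omega$ in $\Poly{r}{}\Lambda^{k+1}(f)$. When $d\ge k+2$, I invoke the commutation $\ud_{r,f}^k\ul I_{r,f}^k\omega=\ul I_{r,f}^{k+1}\rd\omega$: on cells of dimension $\ge k+2$ it follows directly from \eqref{eq:discrete.exterior.derivative:vem}, \eqref{eq:interpolator:vem}, and $\rd\rd=0$, while on the $(k+1)$-subcells it rests on the case just treated (noting that the $L^2$-projection in \eqref{eq:interpolator:vem} acts as the identity because $\rd\omega\in\Poly{r}{}\Lambda^{k+1}(f)$). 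Combining this with \eqref{eq:discrete.exterior.derivative:d>=k+1:vem} and the reverse induction step $\Pot{r}{f}{k+1}\ul I_{r,f}^{k+1}\rd\omega=\rd\omega$ (which applies since $\rd\omega\in\Poly{r+1}{-}\Lambda^{k+1}(f)$ by \eqref{eq:trimmed.spaces:ell>=1}) proves \eqref{eq:discrete.exterior.derivative:polycons:vem}.

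To conclude with \eqref{eq:discrete.potential:polynomial.consistency:vem} at $d\ge k+1$, I apply the defining relation \eqref{eq:vem:discrete.potential:d>=k+1} to $\ul I_{r,f}^k\omega$ and test against $(\mu,\nu)\in\Koly{r+1}{d-k-1}(f)\times\Koly{r+1}{d-k}(f)$. The first term involving $\widetilde D_{\omega,f}$ reduces to $\int_f\rd\omega\wedge\mu$: either via the consistency of $\rd_{r,f}^k$ just proved, if $d=k+1$, or via \eqref{eq:remove.projector} applied to $D_{\omega,f}=\kproj{r+1}{f}{d-k-1}(\star\rd\omega)$, if $d\ge k+2$. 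The boundary term becomes $\int_{\partial f}\tr_{\partial f}\omega\wedge\tr_{\partial f}\mu$ by the induction hypothesis on the subcells of $\partial f$. The Stokes formula \eqref{eq:ipp} then combines these into $(-1)^{k+1}\int_f\omega\wedge\rd\mu$. Finally, the last term equals $(-1)^{k+1}\int_f\omega\wedge\nu$ by removing the projector $\kproj{r+1}{f}{d-k}$ in $\omega_f$ via \eqref{eq:remove.projector}. The right-hand side of \eqref{eq:vem:discrete.potential:d>=k+1} thus reduces to $(-1)^{k+1}\int_f\omega\wedge(\rd\mu+\nu)$, and the isomorphism \eqref{eq:isomorphism.Prtrimmed.koszul} with $r$ replaced by $r+1$ and $\ell=d-k$ identifies $\Pot{r}{f}{k}\ul I_{r,f}^k\omega$ with $\omega$.

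The main delicate point is the bookkeeping of the interleaved induction and the verification that the commutation identity for polynomial $\omega$ is used without circularity: on cells of dimension $\ge k+2$ it is essentially a consequence of the definitions once one notes that $\rd\omega$ has polynomial degree $r$ so the relevant $L^2$-projection is trivial, while on $(k+1)$-cells it rests on the direct computation carried out at the start of the step.
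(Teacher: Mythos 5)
Your proposal is correct and follows essentially the same route as the paper: induction on the cell dimension $d$, the base case $d=k$ via triviality of the $L^2$-projection, the $d=k+1$ derivative consistency by direct computation from \eqref{eq:local.exterior.derivative:vem} (which is exactly the content of the commutation property \eqref{eq:discrete.exterior.derivative:commutativity:vem} that the paper invokes), the $d\ge k+2$ derivative consistency via $\rd_{r,f}^k=\Pot{r}{f}{k+1}\circ\ud_{r,f}^k$ and commutation, and the potential consistency via the Stokes formula and the isomorphism \eqref{eq:isomorphism.Prtrimmed.koszul}. Your explicit reverse induction on $k$ is only an organizational refinement of the paper's argument, which achieves the same acyclicity by establishing the potential consistency for all form degrees at dimension $d$ before using it for the exterior derivative at degree $k+1$.
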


\begin{proof}
  See Section \ref{sec:vem.poly.consistency}.
\end{proof}

\subsection{Complex property}

\begin{lemma}[Complex property]
  The sequence \eqref{eq:vem} defines a complex, i.e., for all integers $k\in[1,n-1]$ and all $\ul\omega_h\in\ul V_{r,h}^{k-1}$,
  \[
  \ud_{r,h}^k(\ud_{r,h}^{k-1}\ul\omega_h) = \ul 0.
  \]
\end{lemma}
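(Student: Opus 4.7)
The plan is to unpack the definition of the global discrete exterior derivative applied twice and reduce the statement to a single non-trivial verification on $(k+1)$-cells. Let $\ul\eta_h\coloneq\ud_{r,h}^{k-1}\ul\omega_h$. By the definition \eqref{eq:discrete.exterior.derivative:vem} applied with $k-1$ instead of $k$, the components of $\ul\eta_h$ read $\eta_{f'}=\star\rd_{r,f'}^{k-1}\ul\omega_{f'}$ for $f'\in\Delta_k(\Mh)$ and $(\eta_f,D_{\eta,f})=(D_{\omega,f},0)$ for $f\in\Delta_d(\Mh)$ with $d\ge k+1$. Applying \eqref{eq:discrete.exterior.derivative:vem} a second time, the contribution of $\ud_{r,h}^k\ul\eta_h$ on any $f\in\Delta_d(\Mh)$ with $d\ge k+2$ is the pair $(D_{\eta,f},0)=(0,0)$, and hence vanishes trivially. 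All that remains is to show $\rd_{r,f}^k\ul\eta_f=0$ for every $f\in\Delta_{k+1}(\Mh)$.

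For the core step, I would fix such an $f$ and test the defining relation \eqref{eq:local.exterior.derivative:vem} against an arbitrary pair $(\mu,\nu)\in\Poly{0}{}\Lambda^0(f)\times\Koly{r}{0}(f)$. The volume contribution $\int_f\star^{-1}D_{\eta,f}\wedge\nu$ vanishes since $D_{\eta,f}=0$, and using $\star^{-1}\star=\mathrm{Id}$ on $0$-forms together with \eqref{eq:integral.relative.orientation} the boundary contribution rewrites as
\[
\int_{\partial f}\star^{-1}\eta_{\partial f}\wedge\tr_{\partial f}\mu
=\sum_{f'\in\Delta_k(f)}\varepsilon_{f\!f'}\int_{f'}\rd_{r,f'}^{k-1}\ul\omega_{f'}\wedge\tr_{f'}\mu.
\]
Since $\mu$ is constant on $f$, its trace $\tr_{f'}\mu$ is constant on each $k$-cell $f'\in\Delta_k(f)$, hence lies in $\Poly{0}{}\Lambda^0(f')$. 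I would then apply the defining equation \eqref{eq:local.exterior.derivative:vem} of $\rd_{r,f'}^{k-1}\ul\omega_{f'}$ on the $k$-cell $f'$ with the pair $(\tr_{f'}\mu,0)$, which turns each such integral into the boundary term $\int_{\partial f'}\star^{-1}\omega_{\partial f'}\wedge\tr_{\partial f'}\mu$.

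Summing over $f'\in\Delta_k(f)$ with the orientations $\varepsilon_{f\!f'}$ and regrouping over $f''\in\Delta_{k-1}(f)$ introduces the factor $\sum_{f'\,:\,f''\in\Delta_{k-1}(f')}\varepsilon_{f\!f'}\varepsilon_{f'\!f''}$, which vanishes by the cochain identity already exploited in the proof of Lemma~\ref{lem:link.diff.subcells}. Therefore $\int_f\rd_{r,f}^k\ul\eta_f\wedge(\mu+\nu)=0$ for all admissible $(\mu,\nu)$; since $\mu+\nu$ spans $\Poly{r}{}\Lambda^0(f)$ thanks to \eqref{eq:decomposition.Pr:ell=0}, the Riesz-type argument recalled in Remark~\ref{rem:riesz.d.pot} gives $\rd_{r,f}^k\ul\eta_f=0$, as required.

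No serious obstacle is anticipated: as announced at the start of Section~\ref{sec:vem}, the VEM design encodes the discrete exterior derivative directly in the discrete spaces, so the complex property is essentially a matter of bookkeeping. The only mildly subtle point is the compatibility check that allows $(\tr_{f'}\mu,0)$ to serve as a valid test pair for the inner application of~\eqref{eq:local.exterior.derivative:vem}, which is what justifies propagating the computation one level deeper and triggering the CW-type cancellation.
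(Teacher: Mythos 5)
Your proposal is correct and follows essentially the same route as the paper's proof: reduce to showing $\rd_{r,f}^k(\ud_{r,f}^{k-1}\ul\omega_f)=0$ on $(k+1)$-cells, unpack \eqref{eq:local.exterior.derivative:vem} with the vanishing $D$-component, apply the same definition one level down with the test pair $(\tr_{f'}\mu,0)$, and conclude by the cancellation $\sum_{f'}\varepsilon_{f\!f'}\varepsilon_{f'\!f''}=0$ as in Lemma~\ref{lem:link.diff.subcells}. No gaps.
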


\begin{proof}
  Applying the definition \eqref{eq:discrete.exterior.derivative:vem} of the global discrete exterior derivative for $k-1$, we obtain
  \begin{equation}\label{eq:complex:vem:1}
    \ud_{r,h}^{k-1}\ul\omega_h
    = \left(
    (\star\rd_{r,f}^{k-1}\ul\omega_f)_{f\in\Delta_k(\Mh)},
    (D_{\omega,f},0)_{f\in\Delta_d(\Mh),\,d\in[k+1,n]}
    \right)\in\ul V_{r,h}^k,
  \end{equation}
  which shows that, for all $d\in[k+1,n]$ and all $f\in\Delta_d(\Mh)$, the exterior derivative components of $\ud_{r,h}^{k-1}\ul\omega_h$ are zero, and thus that
  \[
  \ud_{r,h}^k(\ud_{r,h}^{k-1}\ul\omega_h)
  = \left(
  \big( \star\rd_{r,f}^k(\ud_{r,f}^{k-1}\ul\omega_f) \big)_{f\in\Delta_{k+1}(\Mh)},
  (0,0)_{f\in\Delta_d(\Mh),\,d\in[k+2,n]}
  \right)\in\ul V_{r,h}^{k+1}.
  \]
  The assertion is therefore proved if we show that $\rd_{r,f}^k(\ud_{r,f}^{k-1}\ul\omega_f) = 0$ for all $f\in\Delta_{k+1}(\Mh)$.
  Applying the definition of the local discrete exterior derivative (see \eqref{eq:local.exterior.derivative:vem}) with $\ul\omega_f$ replaced by $\ud_{r,f}^{k-1}\ul\omega_f$ obtained by restricting \eqref{eq:complex:vem:1} to $f$, we get:
  For all $(\mu,\nu)\in\Poly{0}{}\Lambda^0(f)\times\Koly{r}{0}(f)$,
  \[
  \int_f\rd_{r,f}^k(\ud_{r,f}^{k-1}\ul\omega_f)\wedge(\mu+\nu)
  = \int_{\partial f}\rd_{r,\partial f}^{k-1}\ul\omega_{\partial f}\wedge\tr_{\partial f}\mu
  = 0,
  \]
  where the conclusion follows using the definition \eqref{eq:local.exterior.derivative:vem} of $\rd_{r,f'}^{k-1}\ul\omega_{f'}$ with $(\mu,\nu)\gets(\tr_{f'}\mu,0)$ for all $f'\in\Delta_k(f)$ and noticing, as at the end of the proof of Lemma \ref{lem:link.diff.subcells}, that the sum over $f'$ of the integrals over $\partial f'$ is zero.
\end{proof}

\subsection{Commutation}

\begin{proposition}[Commutation property for the discrete exterior derivative in dimension $d = k+1$]
  For all $f\in\Delta_{k+1}(\Mh)$, it holds
  \begin{equation}\label{eq:discrete.exterior.derivative:commutativity:vem}
    \rd_{r,f}^k\ul I_{r,f}^k\omega
    = \star^{-1}\lproj{r}{f}{0}(\star\rd\omega)
    \qquad\forall\omega\in C^1\Lambda^k(\overline{f}),
  \end{equation}
  expressing the commutativity of the following diagram:%
  \[
  \begin{tikzcd}
    C^1\Lambda^k(\overline{f})\arrow{r}{\rd}\arrow{d}{\ul I_{r,f}^k}
    & C^0\Lambda^{k+1}(\overline{f})\arrow{d}{\star^{-1}\lproj{r}{f}{0}\star}
    \\
    \ul V_r^k(f)\arrow{r}{\rd_{r,f}^k}
    & \Poly{r}{}\Lambda^{k+1}(f).
  \end{tikzcd}
  \]
\end{proposition}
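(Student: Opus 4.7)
The plan is to compute $\rd_{r,f}^k \ul I_{r,f}^k\omega$ directly from the definition \eqref{eq:local.exterior.derivative:vem} using the explicit form of the interpolator \eqref{eq:interpolator:vem}, then apply Stokes' formula to rewrite the boundary term. On $f\in\Delta_{k+1}(\Mh)$, the relevant components of $\ul I_{r,f}^k\omega$ are $\omega_{f'}=\lproj{r}{f'}{0}(\star\tr_{f'}\omega)$ on each $f'\in\Delta_k(f)$ and $D_{\omega,f}=\kproj{r}{f}{0}(\star\rd\omega)$. Testing \eqref{eq:local.exterior.derivative:vem} against a generic pair $(\mu,\nu)\in\Poly{0}{}\Lambda^0(f)\times\Koly{r}{0}(f)$, I would eliminate both projectors using \eqref{eq:remove.projector} (since $\tr_{f'}\mu\in\Poly{0}{}\Lambda^0(f')\subset\Poly{r}{}\Lambda^0(f')$ and $\nu\in\Koly{r}{0}(f)\subset\Poly{r}{}\Lambda^0(f)$), obtaining
\[
\int_f \rd_{r,f}^k\ul I_{r,f}^k\omega\wedge(\mu+\nu)
= \int_{\partial f}\tr_{\partial f}\omega\wedge\tr_{\partial f}\mu
+ \int_f\rd\omega\wedge\nu.
\]

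Next, since $\mu\in\Poly{0}{}\Lambda^0(f)$ is constant we have $\rd\mu=0$, so the Stokes formula \eqref{eq:ipp} applied to the pair $(\omega,\mu)$ yields $\int_{\partial f}\tr_{\partial f}\omega\wedge\tr_{\partial f}\mu=\int_f\rd\omega\wedge\mu$. Substituting this into the previous relation gives
\[
\int_f \rd_{r,f}^k\ul I_{r,f}^k\omega\wedge(\mu+\nu)
= \int_f\rd\omega\wedge(\mu+\nu)
\qquad\forall(\mu,\nu)\in\Poly{0}{}\Lambda^0(f)\times\Koly{r}{0}(f).
\]
By the decomposition \eqref{eq:decomposition.Pr:ell=0}, the element $\mu+\nu$ spans the whole of $\Poly{r}{}\Lambda^0(f)$ as $(\mu,\nu)$ spans $\Poly{0}{}\Lambda^0(f)\times\Koly{r}{0}(f)$.

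Finally, applying \eqref{eq:remove.projector} once more with $\mathcal X=\Poly{r}{}\Lambda^0(f)$ (and with the lemma's index $k$ replaced by $k+1$, so that $d-k=0$), the right-hand side can be rewritten as $\int_f\star^{-1}\lproj{r}{f}{0}(\star\rd\omega)\wedge\alpha$ for all $\alpha\in\Poly{r}{}\Lambda^0(f)$. Hence the difference $\rho\coloneq\rd_{r,f}^k\ul I_{r,f}^k\omega-\star^{-1}\lproj{r}{f}{0}(\star\rd\omega)\in\Poly{r}{}\Lambda^{k+1}(f)$ satisfies $\int_f\rho\wedge\alpha=0$ for all $\alpha\in\Poly{r}{}\Lambda^0(f)$. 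Taking $\alpha=\star\rho\in\Poly{r}{}\Lambda^0(f)$ and recalling that the $L^2$-product of top-degree forms is positive definite, we conclude $\rho=0$, which is the claimed identity.

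No real obstacle is expected: the proof is essentially bookkeeping with the projection formula \eqref{eq:remove.projector} and a single application of Stokes'. The only point that requires a modicum of care is the last nondegeneracy argument for the pairing between top-degree and $0$-forms, which is immediate once one observes that $\star$ maps $\Poly{r}{}\Lambda^{k+1}(f)$ bijectively onto $\Poly{r}{}\Lambda^0(f)$.
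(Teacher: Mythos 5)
Your proof is correct and follows essentially the same route as the paper's: plug the interpolator into \eqref{eq:local.exterior.derivative:vem}, strip the projectors with \eqref{eq:remove.projector}, use Stokes with $\rd\mu=0$, reinsert the projector on the right-hand side, and conclude from the fact that $\mu+\nu$ spans $\Poly{r}{}\Lambda^0(f)$ by \eqref{eq:decomposition.Pr:ell=0}. The only difference is that you make the final nondegeneracy argument explicit, which the paper leaves implicit.
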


\begin{proof}
  Plugging the definition \eqref{eq:interpolator:vem} of the interpolator into \eqref{eq:local.exterior.derivative:vem} we get, for all $(\mu,\nu)\in\Poly{0}{}\Lambda^0(f)\times\Koly{r}{0}(f)$,
  \[
  \int_f\rd_{r,f}^{k}\ul I_{r,f}^k\omega\wedge(\mu+\nu)
  = \int_{\partial f}\star^{-1}\lproj{r}{\partial f}{0}(\star\tr_{\partial f}\omega)\wedge\tr_{\partial f}\mu
  + \int_f\star^{-1}\kproj{r}{f}{0}(\star\rd\omega)\wedge\nu,
  \]
  where $\lproj{r}{\partial f}{0}$ denotes the piecewise $L^2$-orthogonal projector obtained patching together the $\lproj{r}{f'}{0}$, $f'\in\Delta_k(f)$.
  Using \eqref{eq:remove.projector} with $(\mathcal X,\omega,\mu)\gets(\Koly{r}{0}(f'),\rd\omega,\nu)$
  for the second term and, for each $f'\in\Delta_k(f)$, $(\mathcal X,d,f)\gets(\Poly{r}{}\Lambda^0(f'),k,f')$ for the first term, the projectors can be removed.
  The Stokes formula \eqref{eq:ipp} along with $\rd\mu = 0$ (since $\mu$ has constant coefficients) then yields
  \begin{equation*}
    \int_f\rd_{r,f}^{k}\ul I_{r,f}^k\omega\wedge(\mu+\nu)
    = \int_f\rd\omega\wedge\mu +\int_f\rd\omega\wedge\nu
    =\int_f\star^{-1}\lproj{r}{f}{0}(\star\rd\omega)\wedge(\mu+\nu),
  \end{equation*}
  where the conclusion follows from \eqref{eq:remove.projector} with $(\mathcal X,\omega,\mu)\gets(\Poly{r}{}\Lambda^0(f),\rd\omega,\mu+\nu)$.
  Since, by \eqref{eq:decomposition.Pr:ell=0}, $\mu + \nu$ spans $\Poly{r}{}\Lambda^0(f)$ as $(\mu,\nu)$ spans $\Poly{0}{}\Lambda^0(f)\times\Koly{r}{0}(f)$, this concludes the proof.
\end{proof}

\begin{proposition}[Commutation property for the local discrete exterior derivative]
  For all integers $d\in [1,n]$ and $k\le d-1$, and all $f\in\Delta_d(\Mh)$, it holds
  \begin{equation}\label{eq:commut.d:vem}
    \ud_{r,f}^k(\ul I_{r,f}^k\omega)=\ul I_{r,f}^{k+1}(\rd\omega)\qquad\forall\omega\in C^2\Lambda^k(\overline{f}),
  \end{equation}
  expressing the commutativity of the following diagram:
  \[
  \begin{tikzcd}
    C^2\Lambda^k(\overline{f})\arrow{r}{\rd}\arrow{d}{\ul I_{r,f}^k}
    & C^1\Lambda^{k+1}(\overline{f})\arrow{d}{\ul I_{r,f}^{k+1}}
    \\
    \ul V_r^k(f)\arrow{r}{\ud_{r,f}^k}
    & \ul V_r^{k+1}(f).
  \end{tikzcd}
  \]
\end{proposition}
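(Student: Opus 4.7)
The strategy is to verify the identity component by component, exploiting the transparent structure of $\ud_{r,h}^k$ in \eqref{eq:discrete.exterior.derivative:vem} and of the interpolator \eqref{eq:interpolator:vem}. On the right-hand side, the relation $\rd\rd\omega = 0$ immediately forces every ``exterior derivative'' slot of $\ul I_{r,f}^{k+1}(\rd\omega)$ to vanish, leaving only the ``trace'' components: on each $f' \in \Delta_{k+1}(f)$ the single entry $\lproj{r}{f'}{0}(\star\tr_{f'}\rd\omega)$, and on each $f' \in \Delta_{d'}(f)$ with $d' \ge k+2$ the pair $(\kproj{r+1}{f'}{d'-k-1}(\star\tr_{f'}\rd\omega), 0)$.

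On the left-hand side, the definition \eqref{eq:discrete.exterior.derivative:vem} produces, on each $f' \in \Delta_{d'}(f)$ with $d' \ge k+2$, a pair $(D_{\ul I_{r,f}^k\omega, f'}, 0)$. Reading off the interpolator \eqref{eq:interpolator:vem}, we have $D_{\ul I_{r,f}^k\omega, f'} = \kproj{r+1}{f'}{d'-k-1}(\star\tr_{f'}\rd\omega)$, so these components agree with the right-hand side tautologically. The only non-trivial case is each $(k+1)$-subcell $f'$, where the left-hand side component is $\star\rd_{r,f'}^k\ul I_{r,f'}^k\omega$. Here I would invoke the commutation property \eqref{eq:discrete.exterior.derivative:commutativity:vem} (which applies since $\dim f' = k+1$) to $\tr_{f'}\omega \in C^1\Lambda^k(\overline{f'})$: noting that $\ul I_{r,f'}^k(\tr_{f'}\omega) = \ul I_{r,f'}^k\omega$ (the interpolator only reads traces on subcells of $f'$) and $\rd\tr_{f'} = \tr_{f'}\rd$, applying $\star$ to \eqref{eq:discrete.exterior.derivative:commutativity:vem} yields $\star\rd_{r,f'}^k\ul I_{r,f'}^k\omega = \lproj{r}{f'}{0}(\star\tr_{f'}\rd\omega)$, matching the right-hand side as well.

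No genuine analytical difficulty is present: the challenge is purely bookkeeping — carefully tracking in which slot (and with which polynomial degree, $r$ vs $r+1$) each component of $\ud_{r,f}^k\ul I_{r,f}^k\omega$ is required to live within $\ul V_{r,f}^{k+1}$, given that $(k+1)$-cells play a slightly different role than $d$-cells with $d \ge k+2$ (cf.~\eqref{eq:spaces:vem}). The $C^2$ assumption on $\omega$, rather than just $C^1$ as in the DDR case, is needed only to ensure that the interpolator $\ul I_{r,f}^{k+1}(\rd\omega)$ is well-defined, since \eqref{eq:interpolator:vem} applied to a $(k+1)$-form requires that form to be $C^1$.
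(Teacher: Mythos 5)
Your proof is correct and follows exactly the paper's route: the paper simply states the result is an immediate consequence of \eqref{eq:discrete.exterior.derivative:commutativity:vem}, the definition \eqref{eq:interpolator:vem} of the interpolator, and $\rd\circ\rd=0$, which is precisely the component-by-component bookkeeping you carry out. Your remark on the role of the $C^2$ assumption is also accurate.
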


\begin{proof}
  Immediate consequence of \eqref{eq:discrete.exterior.derivative:commutativity:vem} along with the definition \eqref{eq:interpolator:vem} of the interpolator, and the property $\rd\circ \rd=0$.
\end{proof}

\subsection{Polynomial consistency} \label{sec:vem.poly.consistency}

\begin{proof}[Proof of Theorem \ref{thm:vem.poly.consistency}]
  The proof proceeds by induction on the dimension $d$.
  When $d=k$, \eqref{eq:discrete.potential:polynomial.consistency:vem} is a direct consequence of the definitions \eqref{eq:vem:discrete.potential:d=k} of the potential and \eqref{eq:interpolator:vem} of the interpolator, which give $\Pot{r}{f}{k}\ul I_{r,f}^k\omega = \star^{-1}\lproj{r}{f}{0}(\star\omega) = \omega$, where, to remove the projector, we have used the fact that $\star\omega\in\Poly{r}{}\Lambda^0(f)$, since $\omega\in\Poly{r+1}{-}\Lambda^d(f)=\Poly{r}{}\Lambda^d(f)$ (by \eqref{eq:trimmed.k=d} with $r+1$ instead of $r$).
  \smallskip

  We next prove \eqref{eq:discrete.potential:polynomial.consistency:vem} for $d\ge k+1$ assuming that it holds for $d-1$.
  Writing the definition \eqref{eq:vem:discrete.potential:d>=k+1} of the potential for $\ul\omega_f = \ul I_{r,f}^k\omega$, we get, for all $(\mu,\nu)\in\Koly{r+1}{d-k-1}(f)\times\Koly{r+1}{d-k}(f)$,
  \begin{equation}\label{eq:discrete.potential:polynomial.consistency:vem:intermediate}
    \begin{aligned}
      &(-1)^{k+1}\int_f \Pot{r}{f}{k}(\ul I_{r,f}^k\omega)\wedge(\rd\mu + \nu)
      \\
      &\quad
      = \int_f\star^{-1}\widetilde{D}_{\omega,f}\wedge\mu
      - \int_{\partial f}\Pot{r}{\partial f}{k}(\ul I_{r,\partial f}^k \tr_{\partial f}\omega)\wedge\tr_{\partial f}\mu
      \\
      &\qquad
      + (-1)^{k+1}\int_f\star^{-1}(\kproj{r+1}{f}{d-k}\star\omega)\wedge\nu
      \\
      &\quad
      = \int_f\star^{-1}\widetilde{D}_{\omega,f}\wedge\mu
      - \int_{\partial f}\tr_{\partial f}\omega\wedge\tr_{\partial f}\mu
      + (-1)^{k+1}\int_f\omega\wedge\nu,
    \end{aligned}
  \end{equation}
  where we have used the induction hypothesis for the second term in the right-hand side after noticing that, by Lemma \ref{lem:traces.trimmed} with $\ell = k$, $\tr_{f'}\omega\in\Poly{r+1}{-}\Lambda^k(f')$ for all $f'\in\Delta_{d-1}(f)$, together with \eqref{eq:remove.projector} for the third one.
  Recalling the definition \eqref{eq:tilde.D.omega.f} of $\widetilde{D}_{\omega,f}^k$, we distinguish two cases for the first term in the right-hand side.
  If $d = k+1$, \eqref{eq:discrete.exterior.derivative:polycons:vem} (immediate consequence of \eqref{eq:discrete.exterior.derivative:commutativity:vem} after observing that $d\Poly{r+1}{-}\Lambda^k(f)\subset\Poly{r}{}\Lambda^{k+1}(f)$) gives $\star^{-1}\widetilde{D}_{\omega,f} = \star^{-1}\star\rd_{r,f}^k(\ul I_{r,f}^k\omega) = \rd\omega$.
  If, on the other hand, $d \ge k+2$, recalling the definition \eqref{eq:interpolator:vem} of the interpolator, we have $\int_f\star^{-1}\widetilde{D}_{\omega,f}\wedge\mu = \int_f\star^{-1}(\kproj{r+1}{f}{d-k-1}\star\rd\omega)\wedge\mu \stackrel{\eqref{eq:remove.projector}}{=} \int_f\rd\omega\wedge\mu$.
  Plugging these relations into \eqref{eq:discrete.potential:polynomial.consistency:vem:intermediate}, using the Stokes formula \eqref{eq:ipp}, and simplifying, we get
  \[
  \int_f \Pot{r}{f}{k}(\ul I_{r,f}^k\omega)\wedge(\rd\mu + \nu)
  = \int_f\omega\wedge(\rd\mu + \nu),
  \]
  which yields \eqref{eq:discrete.potential:polynomial.consistency:vem} for $d\ge k+1$ since, by \eqref{eq:isomorphism.Prtrimmed.koszul} with $\ell = d - k \ge 1$, $\rd\mu + \nu$ spans $\Poly{r+1}{-}\Lambda^{d-k}(f)$ as $(\mu,\nu)$ spans $\Koly{r+1}{d-k-1}(f)\times\Koly{r+1}{d-k}(f)$.
  \smallskip

  We have already seen above that \eqref{eq:discrete.exterior.derivative:polycons:vem} holds for $d= k+1$. To prove this relation for $d\ge k+2$,
  it suffices to recall \eqref{eq:discrete.exterior.derivative:d>=k+1:vem} and \eqref{eq:commut.d:vem} to write
  \[
  \rd_{r,f}^k\ul I_{r,f}^k\omega
  = \Pot{r}{f}{k+1}(\ud_{r,f}^k\ul I_{r,f}^k\omega)
  = \Pot{r}{f}{k+1}(\ul I_{r,f}^{k+1}\rd\omega)
  = \rd\omega,
  \]
  where the conclusion follows from \eqref{eq:discrete.potential:polynomial.consistency:vem} after observing that $\rd\omega\in\Poly{r}{}\Lambda^{k+1}(f)\subset\Poly{r+1}{-}\Lambda^{k+1}(f)$.
\end{proof}

\subsection{Cohomology}\label{sec:cohomology.VEM}

As in Section \ref{sec:complex+cohomology:cohomology}, given a form degree $k\in[0,n]$, we first consider the following subspace of $\ul V_{r,h}^k$:
\begin{multline*}
  \ul V_{r,h,\flat}^k\coloneq\bigg\{
  \ul\omega_h = \big( (\omega_f)_{f\in\Delta_k(\Mh)},
  (\omega_f, D_{\omega,f})_{f\in\Delta_d(\Mh),\,d\in[k+1,n]}
  \big)\st
  \\
  \int_f\star^{-1}\omega_f = 0\quad\forall f\in\Delta_k(\Mh)
  \bigg\}.
\end{multline*}
\begin{lemma}[Exactness property for $\ul V_{r,h,\flat}^k$]\label{lem:exact.Vflat}
  For all $k\in[0,n]$, if $\ul\eta_h\in\ul V_{r,h,\flat}^k$ satisfies $\ud_{r,h}^k\ul\eta_h = \ul 0$, then there exists $\ul\omega_h\in\ul V_{r,h,\flat}^{k-1}$ such that $\ul\eta_h = \ud_{r,h}^{k-1}\ul\omega_h$,
  where, in accordance with the sequence \eqref{eq:vem}, we have set $\ud_{r,h}^{-1} = \ud_{r,h}^n \coloneq 0$.
\end{lemma}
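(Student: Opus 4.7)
The plan is to follow the scheme of Lemma~\ref{lem:exact.Xsharp}, separating the cases $k=0$ and $k\ge 1$. The key structural observation that makes this argument cleaner than its DDR counterpart is that, for $\ul\omega_h\in\ul V_{r,h}^{k-1}$, the definition \eqref{eq:discrete.exterior.derivative:vem} of $\ud_{r,h}^{k-1}\ul\omega_h$ on $d$-cells with $d\ge k+1$ only involves the polynomial component $D_{\omega,f}$ already carried by $\ul\omega_h$---no chain of boundary reconstructions is required.

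For $k=0$, since by convention $\ud_{r,h}^{-1}=0$, the statement reduces to showing $\ul\eta_h=\ul 0$. I would argue dimension by dimension. On $0$-cells the $\flat$ condition directly forces $\eta_f=0$. On $1$-cells the first component already vanishes because $\Koly{r+1}{1}(f)=\{0\}$ by \eqref{eq:Koly.0.ell=Koly.r.d=0}, while the kernel condition $\rd_{r,f}^0\ul\eta_f=0$, combined with the previous step and \eqref{eq:local.exterior.derivative:vem} tested against $\nu\in\Koly{r}{0}(f)$, forces $D_{\eta,f}=0$ via \eqref{eq:remove.projector}. On $d$-cells with $d\ge 2$, the first component is zero for the same structural reason, and $D_{\eta,f}=0$ is read off \eqref{eq:discrete.exterior.derivative:vem} directly.

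For $k\ge 1$ the plan is to write $\ul\omega_h$ explicitly, exploiting the observation above, by setting $\omega_f=0$ for all $f\in\Delta_{k-1}(\Mh)$ (so that $\ul\omega_h\in\ul V_{r,h,\flat}^{k-1}$), $(\omega_f,D_{\omega,f})=(0,\kproj{r}{f}{0}(\eta_f))$ on every $k$-cell, and $(\omega_f,D_{\omega,f})=(0,\eta_f)$ on every $d$-cell with $d\ge k+1$. To verify that $\ud_{r,h}^{k-1}\ul\omega_h=\ul\eta_h$ on $k$-cells, I would plug these choices into \eqref{eq:local.exterior.derivative:vem}: the boundary term drops out since $\omega_{\partial f}=0$; the constant test function $\mu\in\Poly{0}{}\Lambda^0(f)$ translates into the compatibility condition $\int_f\star^{-1}\eta_f=0$, which is precisely the $\flat$ hypothesis on $\ul\eta_h$; and the test against $\nu\in\Koly{r}{0}(f)$ reduces via \eqref{eq:remove.projector} to the identity $\int_f\star^{-1}D_{\omega,f}\wedge\nu=\int_f\star^{-1}\eta_f\wedge\nu$. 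Combining these two identities through the decomposition \eqref{eq:decomposition.Pr:ell=0} pins down $\rd_{r,f}^{k-1}\ul\omega_f=\star^{-1}\eta_f$, i.e., $\star\rd_{r,f}^{k-1}\ul\omega_f=\eta_f$. On cells of dimension $\ge k+1$ the matching is immediate once one observes that $D_{\eta,f}=0$: on $(k+1)$-cells this follows from \eqref{eq:Domegaf=proj.drfk} applied to $\ul\eta_h$ together with $\rd_{r,f}^k\ul\eta_f=0$, while on cells of dimension $\ge k+2$ it is read off \eqref{eq:discrete.exterior.derivative:vem} and the kernel condition.

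The main obstacle is the compatibility of the constant-test component of \eqref{eq:local.exterior.derivative:vem} on $k$-cells with the datum $\eta_f$: this is exactly where the $\flat$ condition is essential, acting as the topological obstruction that is absorbed when one descends to cohomology. Once that is handled, the VEM design---in which differential components are encoded directly in the spaces---makes the remaining verifications essentially component-by-component, bypassing the recursive potential reconstructions that were needed in the proof of Lemma~\ref{lem:exact.Xsharp}.
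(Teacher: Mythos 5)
Your proposal is correct and follows essentially the same route as the paper's proof: the same dimension-by-dimension argument for $k=0$, and for $k\ge 1$ the identical explicit choice $\ul\omega_h=\big((0)_{f\in\Delta_{k-1}(\Mh)},(0,\kproj{r}{f}{0}\eta_f)_{f\in\Delta_k(\Mh)},(0,\eta_f)_{f\in\Delta_d(\Mh),\,d\ge k+1}\big)$, verified on $k$-cells via \eqref{eq:local.exterior.derivative:vem}, \eqref{eq:remove.projector}, the $\flat$ condition and \eqref{eq:decomposition.Pr:ell=0}, and on higher-dimensional cells via \eqref{eq:Domegaf=proj.drfk} and the kernel condition. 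The only (minor) item the paper spells out that you leave implicit is the check that $\eta_f\in\Koly{r+1}{d-k}(f)=\Koly{r+1}{d-(k-1)-1}(f)$ is indeed an admissible second component for $\ul\omega_h$ on $d$-cells with $d\ge k+1$.
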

\begin{proof}
  Recalling the definition \eqref{eq:discrete.exterior.derivative:vem} of $\ud_{r,h}^k\ul\eta_h$, we have
  \[
  \ud_{r,h}^k\ul\eta_h
  = \big(
  (\star\rd_{r,f}^k\ul\eta_f)_{f\in\Delta_{k+1}(\Mh)},
  (D_{\eta,f},0)_{f\in\Delta_d(\Mh),\,d\in[k+2,n]}
  \big).
  \]
  If $k=0$, then $\int_f\star^{-1}\eta_f = 0$ implies $\eta_f = 0$ for all $f\in\Delta_0(\Mh)$; moreover, $\eta_f = 0$ for all $f\in\Delta_d(\Mh)$, $d\in[1,n]$, by definition \eqref{eq:spaces:vem} of $\ul V_{r,h}^0$ (recall that $\Koly{r}{d}(f)=\{0\}$ for all $r$ and all $f \in \Delta_d(\Mh)$, cf.~\eqref{eq:Koly.0.ell=Koly.r.d=0}). The condition $\ud_{r,h}^k\ul\eta_h = \ul 0$ together with \eqref{eq:Domegaf=proj.drfk} yields $D_{\omega,f} = 0$ for all $f\in\Delta_d(\Mh)$, $d\ge k+1$, and thus
  \[
  \ul\eta_h = \big( (0)_{f\in\Delta_0(\Mh)}, (0, 0)_{f\in\Delta_d(\Mh),\,d\in[k+1,n]}\big)
  = \ud_{r,h}^{-1} 0.
  \]
  If $1\le k\le n-1$, on the other hand, from $\ud_{r,h}^k\ul\eta_h = \ul 0$ and \eqref{eq:Domegaf=proj.drfk} we infer
  \begin{subequations}\label{eq:expression.etah}
    \begin{equation}
      \ul\eta_h = \big(
      (\eta_f)_{f\in\Delta_k(\Mh)},
      (\eta_f,0)_{f\in\Delta_d(\Mh),\,d\in[k+1,n]}
      \big),
    \end{equation}
    while, if $k=n$, we simply have
    \begin{equation}
      \ul\eta_h = (\eta_f)_{f\in\Delta_n(\Mh)}.
    \end{equation}
  \end{subequations}
  Let now
  \[
  \ul\omega_h = \big(
  (0)_{f\in\Delta_{k-1}(\Mh)},
  (0,\kproj{r}{f}{0}\eta_f)_{f\in\Delta_k(\Mh)},
  (0,\eta_f)_{f\in\Delta_d(\Mh),\,d\in[k+1,n]}
  \big)\in\ul V_{r,h,\flat}^{k-1}.
  \]
  To check that this $\ul\omega_h$ is well defined, it suffices to notice that, if $f\in\Delta_d(\Mh)$ with $d\ge k+1=(k-1)+2$, then $\eta_f\in\Koly{r+1}{d-k}(f)=\Koly{r+1}{d-(k-1)-1}(f)$ is a suitable choice for the corresponding component of $\ul\omega_h$.
  By definition \eqref{eq:local.exterior.derivative:vem} of $\rd_{r,f}^{k-1}$, we have:
  For all $f\in\Delta_k(\Mh)$ and all $(\mu,\nu)\in\Poly{0}{}\Lambda^0(f)\times\Koly{r}{0}(f)$, since $\omega_{f'}=0$ for all $f'\in\Delta_k(f)$,
  \[
  \int_f\rd_{r,f}^{k-1}\ul\omega_f\wedge(\mu + \nu)
  = \int_f\star^{-1}\cancel{\kproj{r}{f}{0}}\eta_f\wedge\nu
  = \int_f\star^{-1}\eta_f\wedge(\mu + \nu),
  \]
  where the cancellation of $\kproj{r}{f}{0}$ is made possible by \eqref{eq:remove.projector} with $(\mathcal X,\omega,\mu)\gets(\Koly{r}{0}(f),\star^{-1}\eta_f,\nu)$, while the introduction of $\mu$ in the last passage is justified observing that $\ul\eta_h\in\ul V_{r,h,\flat}^k$ implies $\int_f\star^{-1}\eta_f = 0$ for all $f\in\Delta_k(\Mh)$.
  This relation gives $\rd_{r,f}^{k-1}\ul\omega_f = \star^{-1}\eta_f$ for all $f\in\Delta_k(\Mh)$ which, combined with the definition \eqref{eq:discrete.exterior.derivative:vem} of the global discrete exterior derivative and the expression \eqref{eq:expression.etah} of $\ul\eta_h$, readily yields $\ul\eta_h = \ud_{r,h}^{k-1}\ul\omega_h$ and concludes the proof.
\end{proof}

\begin{proof}[Proof of Theorem \ref{thm:cohomology:vem}]
  Contrary to the \DDR{0} complex, the \VEM{0} complex is not isomorphic to the CW complex (the VEM spaces for $r=0$ do not have only constant polynomial components on the lowest-dimensional cells).
  As a consequence, designing extensions and reductions between the \VEM{r} and \VEM{0}  complexes in the spirit of Theorem \ref{thm:cohomology:ddr} would not directly characterise the cohomology of the VEM complex.
  To circumvent this difficulty, we will instead design extensions $\Ext{h}{k}:\ul X_{0,h}^k\to\ul V_{r,h}^k$ and reductions $\Red{h}{k}:\ul V_{r,h}^k\to\ul X_{0,h}^k$ between the \VEM{r}, $r\ge 0$, and the \DDR{0} complexes, in order to show that their cohomologies are isomorphic.
  By Theorem \ref{thm:cohomology:ddr}, this will prove that the cohomology of \VEM{r} is isomorphic to the continuous de Rham cohomology.

  Throughout the rest of this proof, $(\Pot{0}{f}{k},\rd^k_{0,f})$ and $(\Pot{r}{f}{k},\rd^k_{r,f})$ denote, respectively, the couple (potential reconstruction, discrete exterior derivative) of the \DDR{0} and \VEM{r} complexes.
  We do not need to differentiate these notations, as the argument removes all ambiguity.
  For all form degrees $k\in[0,d]$, the reduction is obtained setting
  \begin{equation}\label{eq:Red:vem}
    \Red{h}{k}\ul\omega_h\coloneq\big(
    (\lproj{0}{f}{0}\omega_f)_{f\in\Delta_k(\Mh)}
    \big)\qquad\forall\ul\omega_h\in\ul V_{r,h}^k,
  \end{equation}
  while the extension is given by
  \begin{equation}\label{eq:Ext:vem}
    \Ext{h}{k}\ul\eta_h
    \coloneq
    \begin{aligned}[t]
      \Big(
      &(\eta_f)_{f\in\Delta_k(\Mh)},
      \\
      &
      \big(
      \kproj{r+1}{f}{1}(\star\Pot{0}{f}{k}\ul\eta_f),
      \kproj{r}{f}{0}(\star\rd^k_{0,f}\ul\eta_f)
      \big)_{f\in\Delta_{k+1}(\Mh)},
      \\
      &\big(
      \kproj{r+1}{f}{d-k}(\star\Pot{0}{f}{k}\ul\eta_f),
      \kproj{r+1}{f}{d-k-1}(\star\rd^k_{0,f}\ul\eta_f)\big)_{f\in\Delta_d(\Mh),\,d\ge k+2}
      \Big)\qquad\forall \ul\eta_h\in \ul X_{0,h}^k.
    \end{aligned}
  \end{equation}
  As in the proof of Theorem~\ref{thm:cohomology:ddr}, we need to establish the properties (C1)--(C3) of \cite[Assumption 1]{Di-Pietro.Droniou:23*2} to obtain the desired isomorphism in cohomology (also in this case, the relation \eqref{eq:RdE} is an immediate consequence of (C1) and (C3)).
  \medskip\\
  \underline{\emph{Proof of (C1).}}
  An inspection of the definitions \eqref{eq:Red:vem} of the reduction and \eqref{eq:Ext:vem} of the extension shows that $\Red{h}{k}\Ext{h}{k}\ul\eta_h = \ul\eta_h$ for all $\ul\eta_h\in\ul X_{0,h}^k$, and thus (C1) holds a fortiori.
  \medskip\\
  \underline{\emph{Proof of (C3).}}
  We need to prove that both the reduction and extension are cochain maps.
  \smallskip

  Let us start with the extension. We have to prove that, for any integer $k\in [0,n-1]$ and all $\ul\eta_h\in \ul X_{r,h}^k$, $\Ext{h}{k+1}(\ud^k_{0,h}\ul\eta_h)=\ud^k_{r,h}(\Ext{h}{k}\ul\eta_h)$. Owing to the definitions \eqref{eq:Ext:vem} of the extension, \eqref{eq:global.discrete.exterior.derivative} of $\ud^k_{0,f}$, and \eqref{eq:discrete.exterior.derivative:vem} of $\ud^k_{r,f}$, and since $\rd^{k+1}_{0,f}\circ\ud^k_{0,f}=0$ (by \eqref{eq:complex.prop} with $r=0$ and $k+1$ instead of $k$) this amounts to proving that
  \begin{alignat}{4}
    \label{eq:Ext.vem.cochain.k+1}
    \star\rd^k_{0,f}\ul\eta_f ={}&  \star\rd^k_{r,f}(\Ext{f}{k}\ul\eta_f)&\qquad& \forall f\in\Delta_{k+1}(\Mh),
    \\
    \label{eq:Ext.vem.cochain.>=k+2}
    \kproj{r+1}{f}{d-k-1}(\star\Pot{0}{f}{k+1}\ud^k_{0,f}\ul\eta_f)={}&\kproj{r+1}{f}{d-k-1}(\star\rd^k_{0,f}\ul\eta_f)
    &\qquad&\forall f\in\Delta_d(\Mh),\,d\ge k+2.
  \end{alignat}
  The relation \eqref{eq:Ext.vem.cochain.>=k+2} trivially follows from $\Pot{0}{f}{k+1}\ud^k_{0,f}=\rd^k_{0,f}$, which comes from \eqref{eq:link.pot.diff} with $(k,r)\gets(k+1,0)$. To prove \eqref{eq:Ext.vem.cochain.k+1}, we take $(\mu,\nu)\in \Poly{0}{}\Lambda^0(f)\times \Koly{r}{0}(f)$ and apply the definitions \eqref{eq:local.exterior.derivative:vem} of $\rd^k_{r,f}(\Ext{f}{k}\ul\eta_f)$ and \eqref{eq:Ext:vem} of $\Ext{k}{h}\ul\eta_h$ to get
  \begin{align*}
    \int_f \rd^k_{r,f}(\Ext{f}{k}\ul\eta_f)\wedge(\mu + \nu)
    &=
    \int_{\partial f}\star^{-1} \eta_{\partial f}\wedge\mu
    + \int_f \star^{-1}\kproj{r}{f}{0}(\star\rd^k_{0,f}\ul\eta_f)\wedge\nu\\
    &=
    \int_f \rd^k_{0,f}\ul\eta_f\wedge\mu
    + \int_f \rd^k_{0,f}\ul\eta_f\wedge\nu
    = \int_f \rd^k_{0,f}\ul\eta_f\wedge(\mu + \nu),
  \end{align*}
  where the second equality is obtained using the definition \eqref{eq:discrete.exterior.derivative:ddr} of $\rd^k_{0,f}$ for the first term and \eqref{eq:remove.projector} with $(\mathcal X,\omega,\mu)\gets(\Koly{r}{0}(f),\rd^k_{0,f}\ul\eta_f,\nu)$ for the second one.
  By \eqref{eq:decomposition.Pr:ell=0}, and since both $\rd^k_{r,f}(\Ext{f}{k}\ul\eta_f)$ and $\rd^k_{0,f}\ul\eta_f$ belong to $\Poly{r}{}\Lambda^{k+1}(f)$, this relation gives $\rd^k_{r,f}(\Ext{f}{k}\ul\eta_f)=\rd^k_{0,f}\ul\eta_f$, thus proving \eqref{eq:Ext.vem.cochain.k+1}.
  \smallskip

  Let us now turn to the reduction. We need to show that, for any integer $k\in [0,n-1]$ and all $\ul\omega_h\in\ul V_{r,h}^k$, $\Red{h}{k+1}(\ud_{r,h}^k\ul\omega_h) = \ud_{0,h}^k(\Red{h}{k}\ul\omega_h)$, i.e., accounting for the definitions \eqref{eq:Red:vem} of the reduction, \eqref{eq:global.discrete.exterior.derivative} of $\ud_{0,h}^k$ (additionally noticing that $\trimproj{0}{f}{0}$ coincides with $\lproj{0}{f}{0}$ owing to \eqref{eq:trimmed.spaces:ell=0}), and \eqref{eq:discrete.exterior.derivative:vem} of $\ud_{r,h}^k$,
  \begin{equation}\label{eq:Red.vem.cochain}
    \lproj{0}{f}{0}(\star\rd_{r,f}^k\ul\omega_f)
    = \star\rd_{0,f}^k\Red{f}{k}\ul\omega_f
    \qquad\forall f\in\Delta_{k+1}(\Mh).
  \end{equation}
  To check this relation, let $f\in\Delta_{k+1}(\Mh)$ and write, for all $\mu\in\Poly{0}{}\Lambda^0(f)$,
  \[
  \begin{alignedat}{4}
    \int_f\star^{-1}\lproj{0}{f}{0}(\star\rd_{r,f}^k\ul\omega_f)\wedge\mu
    &=
    \int_f\rd_{r,f}^k\ul\omega_f\wedge\mu
    &\qquad&\text{Eq.~\eqref{eq:remove.projector}}
    \\
    &=
    \int_{\partial f}\star^{-1}\omega_{\partial f}\wedge\tr_{\partial f}\mu
    &\qquad&\text{Eq.~\eqref{eq:local.exterior.derivative:vem}}
    \\
    &=
    \int_{\partial f}\star^{-1}\lproj{0}{\partial f}{0}\omega_{\partial f}\wedge\tr_{\partial f}\mu
    \\
    &=
    \int_f\rd_{0,f}^k\Red{f}{k}\ul\omega_f\wedge\mu,
    &\qquad&\text{Eqs.~\eqref{eq:Red:vem}, \eqref{eq:discrete.exterior.derivative:ddr}, \eqref{eq:discrete.potential:d=k}}
  \end{alignedat}
  \]
  where the third equality follows from \eqref{eq:remove.projector} with $(\mathcal X,\omega,\mu)\gets(\Poly{0}{}\Lambda^0(\partial f),\star^{-1}\omega_{\partial f},\tr_{\partial f}\mu)$.
  This proves \eqref{eq:Red.vem.cochain}, and thus that the reductions form a cochain map.
  \medskip\\
  \underline{\emph{Proof of (C2).}}
  For all $\ul\omega_h\in\ul V_{r,h}^k$, by the definitions \eqref{eq:Red:vem} and \eqref{eq:Ext:vem} of the reduction and extension, it holds $\Ext{h}{k}\Red{h}{k}\ul\omega_h - \ul\omega_h\in\ul V_{r,h,\flat}^k$.
  The proof then continues as in point 4.~of the proof of Theorem~\ref{thm:cohomology:ddr} (see Section~\ref{sec:complex+cohomology:cohomology}) with $\ul X_{r,h,\flat}^k$ replaced by $\ul V_{r,h,\flat}^k$ and Lemma~\ref{lem:exact.Xsharp} replaced by Lemma~\ref{lem:exact.Vflat}.
\end{proof}


\section{Related works}\label{sec:links}

We provide here some elements of comparison between the DDR and VEM constructions of Sections \ref{sec:ddr} and \ref{sec:vem}, and two other families of discrete complexes.

\subsection{Finite Element Exterior Calculus and Finite Element Systems}

Finite Element Exterior Calculus (FEEC) is the (conforming) finite element approach for the unified analysis of discrete complexes \cite{Arnold.Falk.ea:06,Arnold:18}. It is based on the selection of piecewise polynomial subspaces of $H\Lambda(\Omega)$ that form a subcomplex of the continuous complex \eqref{eq:diff.de.rham}. Finite Element Systems (FES) is a framework for designing subcomplexes that generalises FEEC to cover finite dimensional spaces spanned by differential forms that may not be piecewise polynomial on the selected mesh \cite{Christiansen.Munthe-Kaas.ea:11,Christiansen.Hu:18}. In FEEC complexes, only the spaces of differential forms in the continuous complex are replaced with discrete counterparts: the graded map that links these spaces is the usual exterior derivative $\rd$.
Generalised FES is an abstract setting which also gives freedom on the definition of the graded maps in the complex \cite{Christiansen.Hu:18}.

A (generalised) FES space is a space of $k$-forms on all $d$-cells with $d\ge k$, with a compatibility condition on the traces:
\begin{multline}\label{def:fes}
  A^k(\Mh)=\Bigg\{
  \underline{v}_h\in\!\!\bigtimes_{\stackrel{\text{$f\in\Delta_d(\Mh)$}}{\text{\tiny $d\in[k,n]$}}}\!\! A^k(f)\st
  \\
  \text{$\tr_{f'}v_f=v_{f'}$ for all $(f,f')\in\Delta_d(\Mh)\times\Delta_{d'}(f)$ with $k\le d'\le d$}
  \Bigg\},
\end{multline}
where each $A^k(f)$ is a finite-dimensional space of $k$-forms and $v_f$ denotes the component of $\underline{v}_h$ on $f$.
In the original FES setting (in which the graded maps are $\rd$), each element of $A^k(\Mh)$ can be identified with an element of $H\Lambda^k(\Omega)$.
This setting contains the usual FEEC complexes (in which case $A^k(f)$ are certain polynomial subspaces -- typically full polynomial spaces or trimmed polynomial spaces depending on the considered finite element), but has also been used to develop other discrete complexes, e.g.~based on macro-elements (in which case $A^k(f)$ is a space of piecewise polynomial forms on a subdivision of $f$) or with higher inter-element regularity ($C^1$ spaces, for example).

The concept of (faithful) \emph{mirror system} plays the role of degrees of freedom in the FES framework.
Mirror systems are constructed on a case-by-case basis for each FES, and are auxiliary tools in the framework: they are not required to design the FES spaces (or maps), but they identify (by duality) a basis of such spaces.
A mirror system for $A^k(\Mh)$ is a family of subspaces of linear forms:
\begin{equation}\label{eq:mirror.system.1}
  Z^k(\Mh)=\bigtimes_{\stackrel{\text{$f\in\Delta_d(\Mh)$}}{\text{\tiny $d\in[k,n]$}}}Z^k(f)\qquad\text{ with $Z^k(f)\subset A^k(f)^*$ for all $f\in\Delta(\Mh)$},
\end{equation}
where $A^k(f)^*$ is the dual space of $A^k(f)$ (actually, to link mirror systems and interpolators, each $Z^k(f)$ is chosen as a subspace of $\hat{X}^k(f)^*$ with $\hat{X}^k(f)\supset A^k(f)$, but we won't need this in the discussion here). As can be seen in \eqref{eq:mirror.system.1}, a mirror system is built hierarchically on the mesh, and each $Z^k(f)$ identifies the modes of the FES forms that are ``interior'' to $f$; to obtain all the modes (interior and boundary) associated with $f$, one must consider $\bigtimes_{f'\in\Delta_{d'}(f),\,d'\in[k,d]}Z^k(f')$.

A particular case of interest in the present context is when $Z^k(f)\subset L^2\Lambda^k(f)^*$ (see Remark \ref{rem:riesz.d.pot}). Using the Riesz representation theorem and applying the Hodge star transformation, $Z^k(f)$ can then be identified with a family of subspaces of $L^2$-integrable $(d-k)$-forms:
\begin{equation}\label{eq:mirror.system.2}
  Z^k(\Mh) \cong \bigtimes_{\stackrel{\text{$f\in\Delta_d(\Mh)$}}{\text{\tiny $d\in[k,n]$}}}\widetilde{Z}^{d-k}(f)\quad\text{ with $\widetilde{Z}^{d-k}(f)\subset L^2\Lambda^{d-k}(f)$}.
\end{equation}
Here, and contrary to \eqref{def:fes}, no compatibility condition of the traces is imposed: the spaces $\widetilde{Z}^{d-k}(f)$ are completely disconnected from each other.

\medskip

The FEEC framework provides a setting for the \emph{algebraic} and \emph{analytical} study of discrete complexes based on piecewise polynomial subspaces of the continuous spaces; the constraint of having piecewise polynomial subspaces and of imposing the suitable inter-element continuities restricts the design of finite element methods to certain types of meshes -- mostly tetrahedral and hexahedral. The FES framework is more general in the sense that it does not, in principle, require to identify conforming subspaces (or accepts conforming subspaces that are not piecewise polynomial on the chosen mesh -- these spaces are then usually not explicitly known). Its main restriction, compared to FEEC, is that it only provides \emph{algebraic} results on the discrete complexes, not analytical results such as Poincar\'e inequalities, or primal and adjoint consistencies -- all critical for the numerical analysis of numerical schemes based on the complex. Moreover, as far as we could see in the literature, all complexes based on the FES framework and fully computable (that is, the spaces and operators are explicitly known) seem to rely on the design of discrete subcomplexes of the continuous complex, which imposes restrictions on the types of meshes that can be considered (as in FEEC, mostly tetrahedral and hexahedral meshes, with the graded map being the exterior derivative).
On the contrary, the DDR and VEM constructions of Sections \ref{sec:ddr} and \ref{sec:vem} provide explicit and computable discrete complexes on generic polytopal meshes, that do not rely on finding computable conforming subspaces of the de Rham spaces.
These polytopal methods can be entirely built using spaces of polynomial functions on the mesh, without any compatibility condition on the traces. The spaces are explicit, their bases are directly given by the polynomial components, and the graded map (acting as a discrete exterior derivative) is explicitly expressed in terms of these components.

Comparing \eqref{eq:global.space} and \eqref{eq:mirror.system.2} for example, we see that the DDR space plays the role of a mirror system, and puts discrete polynomial components at the center of the construction. A similar approach is also true for the VEM-inspired spaces \eqref{eq:spaces:vem}, with, contrary to DDR, some polynomial components representing exterior derivatives; see the definition \eqref{eq:interpolator:vem} of the interpolator.

A closer link between DDR and FES can be drawn by noticing that the FES \cite[Section~2.1]{Christiansen.Gillette:16} has the DDR spaces as mirror system (in the sense of \eqref{eq:mirror.system.2}).
The spaces of this FES are based on liftings of harmonic functions on each cell, which cannot be explicitly described in general, and therefore cannot be directly used, say, in a weak formulation of a PDE to design a numerical scheme. This is in contrast with the fully discrete approach employed by the DDR technique, which not only identifies explicit discrete spaces and exterior derivatives, but also consistent $L^2$-inner products on these spaces, therefore providing all the tools required to build numerical schemes, see, e.g. \cite{Di-Pietro.Droniou:21,Beirao-da-Veiga.Dassi.ea:22,Hanot:23}).
We also notice, in passing, that, in the context of vector proxies, different conforming (non-explicit) spaces having the DDR components as degrees of freedom were also identified in \cite[Section 6.2]{Beirao-da-Veiga.Dassi.ea:22}.

Finally, it can be shown, using the results of Section \ref{sec:ddr.complex} and Lemma \ref{lem:exact.Xsharp}, that the DDR complex fits into the generalised FES framework. \cite[Theorem 1]{Christiansen.Hu:18} then provides an alternative study of the cohomology of the DDR complex.
The approach developed in the proof of Theorem \ref{thm:cohomology:ddr} provides a practical way to compute the cohomology spaces of the DDR complex based on those of the underlying CW complex (see \cite[Remark 13]{Di-Pietro.Droniou.ea:23} for details), for which efficient algorithms are available \cite{Dlotko.Specogna:13}.
Moreover, as mentioned above, the FES framework does not cover any analytical properties of the discrete complexes. In particular, for DDR, it only relies on the global discrete exterior derivative $\ud_{r,h}^k$ defined in \eqref{eq:global.discrete.exterior.derivative}, and would not identify or make use of the local potential reconstructions $\Pot{r}{f}{k}$ and discrete exterior derivatives $\rd_{r,f}^k$ which encode the optimal consistency properties of the method (see Theorem \ref{thm:polynomial.consistency:ddr} and \eqref{eq:consistency:L2product}).

\subsection{Distributional Differential Forms}

The theory of Distributional Differential Forms (DDF) has been introduced in \cite{Licht:17} as a generalisation of the construction in  \cite{Braess.Schoberl:08} for the a posteriori error analysis of N\'ed\'elec edge elements. DDF are built on triangulations of the domain and, using their relation with the underlying simplicial complexes (as well as the concept of double complexes), their cohomology was analysed in \cite{Licht:17} for rather general boundary conditions.
Poincar\'e--Friedrichs inequalities were later established in \cite{Christiansen.Licht:20}.

As is the case for the spaces appearing in the DDR and VEM complexes, DDF spaces are collections of differential forms on cells of various dimensions, with form degree depending on the dimension of the cell: if the domain $\Omega$ has dimension $n$, the DDF space of degree $k$ is made of $(k-n+d)$-forms on $d$-cells. No compatibility of the traces is enforced on these forms, which can be completely discontinuous between two $d$-simplices.
The discrete distributional exterior derivative on the DDF space is then composed of two contributions: the exterior derivative inside the simplices, and a trace term. For example, focusing on the highest dimension $d=n$, if the DDF space of $k$-forms is
\begin{equation}\label{eq:disc.diff.form}
  \hat{\Lambda}^k_{-2}(\Delta_n(\Mh))=\hat{\Lambda}^k_{-1}(\Delta_n(\Mh))\oplus \hat{\Lambda}^{k-1}_{-1}(\Delta_{n-1}(\Mh)),
\end{equation}
(with $\hat{\Lambda}^\ell_{-1}$ subspace of piecewise $C^\infty\Lambda^\ell$ forms, the index $-1$ expressing the absence of continuity properties at the interfaces), for a family $\omega_{n,h}=(\omega_f)_{f\in\Delta_n(\Mh)}\in\hat{\Lambda}^k_{-1}(\Delta_n(\Mh))$, we define the distributional derivative $\hat{\rd}^k_h:\hat{\Lambda}^k_{-1}(\Delta_n(\Mh))\to \hat{\Lambda}^{k+1}_{-2}(\Delta_n(\Mh))$ by
\begin{equation}\label{eq:ddf}
  \hat{\rd}^k_h\omega_{n,h}=\left((\rd^k \omega_f)_{f\in\Delta_n(\Mh)},\left(-\sum_{f\in\Sigma_{n}(f')}\varepsilon_{f\!f’}\tr_{f'}\omega_f\right)_{f'\in\Delta_{n-1}(\Mh)}\right),
\end{equation}
where $\Sigma_n(f')$ is the set of $n$-simplices $f$ that share $f'$ (that is, $f'\in\Delta_{n-1}(f)$), and $\varepsilon_{f\!f'}$ is the relative orientation of the simplex $f'$ with respect to the simplex $f$.
Note that, in \eqref{eq:ddf}, we have adopted a presentation of the distributional derivative that distributes its two contributions ($\mathrm{D}$ and $\mathrm{T}$ in \cite{Licht:17}) on the corresponding components $(\hat{\Lambda}^{k+1-i}_{-1}(\Delta_{n-i}(\Mh)))_{i=0,1}$ of $\hat{\Lambda}^{k+1}_{-2}(\Delta_n(\Mh))$ (see \eqref{eq:disc.diff.form} with $k+1$ instead of $k$), instead of writing $\hat{\rd}^k_h$ as a sum of elements in the global space $\hat{\Lambda}^{k+1}_{-2}(\Delta_n(\Mh))$; this is to better compare with the definition \eqref{eq:global.discrete.exterior.derivative}.
This definition of distributional derivative is a global one, obtained by testing the piecewise smooth form $\omega_{n,h}$ against globally smooth forms, which classically results in a term inside each $f\in\Delta_n(\Mh)$ corresponding to the standard exterior derivative (first component in \eqref{eq:ddf}), and a jump across the $(n-1)$-sub-simplices based on the difference of traces on the two adjacent $n$-simplices (second component in \eqref{eq:ddf}).

A crucial remark is that, in \eqref{eq:ddf}, the component $(\rd^k \omega_f)_{f\in\Delta_n(\Mh)}$ of $\hat{d}_h^k\omega_{n,h}$ on $n$-cells only depends on the values $\omega_{n,h}$ of the discrete distributional differential form on $n$-cells, not on the values of these forms on lower-dimensional cells (e.g., $\hat{\Lambda}^{k-1}_{-1}(\Delta_{n-1}(\Mh))$ in \eqref{eq:disc.diff.form}).
This is in contrast with the discrete exterior derivatives in DDR and VEM complexes, whose definition on higher-dimensional cells depends on polynomial components on their sub-cells; see \eqref{eq:discrete.exterior.derivative:ddr} and \eqref{eq:discrete.exterior.derivative:vem}. Another difference between DDR and DDF can be seen when recasting the discrete exterior derivative: integrating by parts \eqref{eq:discrete.exterior.derivative:ddr} yields the following characterisation:
\begin{multline*}
  \int_f \rd_{r,f}^{k} \ul\omega_f \wedge \mu
  = - \int_f \rd(\star^{-1}\omega_f)\wedge  \mu
  + \int_{\partial f} (\Pot{r}{\partial f}{k}\ul\omega_{\partial f}-\tr_{\partial f}(\star^{-1}\omega_{f})) \wedge \tr_{\partial f}{\mu}
  \\
  \forall \mu \in \Poly{r}{}\Lambda^{d-k-1}(f).
\end{multline*}
This relation reveals that $\rd_{r,f}^k\ul\omega_f$ is, as in DDF, composed of an exterior derivative term in the $d$-cell and a boundary term involving jumps. However, contrary to DDF, the jumps here are between the trace of the $d$-cell unknown and the potential $\Pot{r}{\partial f}{k}\ul\omega_{\partial f}$ reconstructed on $(d-1)$-cells (which depends on the unknowns on all $d'$-subcells of $f$, $k\le d'\le d$), not between traces of two $d$-cells unknowns  (as in \eqref{eq:ddf} with $d=n$).
In this respect, the ``jump'' term in DDR relates more to the kind of face differences encountered in polytopal methods (e.g., the HHO method \cite{Di-Pietro.Droniou:20}) while the jump term in DDF is more akin to those arising in discontinuous Galerkin (DG) methods \cite{Di-Pietro.Ern:12}.

This comparison can be extended to the potential reconstructions themselves. Equation \eqref{def:Pot.correction} shows that $\Pot{r}{f}{k}\ul\omega_f$ is obtained applying a higher-order enhancement to the cell component $\star^{-1}\omega_f$, designed from the discrete exterior derivative on $f$ and the potentials on $\partial f$.
This enhancement ensures the high-order consistency of the method starting from lower-order polynomial unknowns. In the context of elliptic equations, it is commonly used in methods with unknowns in the elements and on the faces of the mesh, but it is not directly available in DG methods. In DDF, as in DG, the cell unknown itself must be used (e.g., in a scheme to discretise the source term), and the consistency is therefore limited by the degree of this unknown.


\appendix

\section{Differential forms and vector proxies}\label{sec:appendix}
In this section, we briefly recall basic concepts on alternating (resp.~differential) forms, and their representation in terms of vectors (resp.~vector fields); these representations are often referred to as ``vector proxies''.
We refer the reader to~\cite[Chapter~6]{Arnold:18} for a presentation in the framework of Finite Element Exterior Calculus, and to~\cite{Blair-Perot.Zusi:14},~\cite[Chapter~1]{Carmo:94},~\cite[Chapter~1]{Hanot:22} for an introduction in more general scientific and engineering contexts.

\subsection{Exterior algebra in $\Real^n$}
\label{sec:exterior.algebra}

\subsubsection{Alternating forms}
\label{sec:alternating.forms}
Let $\{ \bvec e_i\}_{i\in[1,n]}$ be the canonical basis of $\Real^n$, equipped with the standard inner product.
A~basis for the space of linear forms over $\Real^n$, i.e., the dual space $(\Real^n)'$ of $\Real^n$, is given by $\{\rd x^i\}_{i\in[1,n]}$,
with $\rd x^i(\bvec e_j) \coloneq \delta^i_j$ (Kr\"onecker symbol), for all $(i,j) \in [1,n]^2$. The starting point of exterior calculus is to consider \emph{alternating} multilinear forms, vanishing
whenever they are applied to a set of linearly dependent vectors in $\Real^n$. For any integer $k\ge 1$, the set of alternating $k$-linear forms on $\Real^n$ is denoted by $\Alt{k}(\Real^n)$; by convention, we set $\Alt{0}(\Real^n) \coloneq \Real$.
We also note that $\Alt{1}(\Real^n) = (\Real^n)'$ and that $\Alt{k}(\Real^n)=\{0\}$ if $k>n$
(since families of $k>n$ vectors are always linearly dependent).
It can be checked that $\dim\Alt{k}(\Real^n) = {n\choose k}$. In particular, $\Alt{n}(\Real^n)$ is the 1-dimensional space spanned by the determinant in the canonical basis $\vol$ (called the volume form).

\subsubsection{Exterior product}

Given two alternating multilinear forms $\omega\in \Alt{i}(\Real^n)$ and $\mu\in \Alt{j}(\Real^n)$, their \emph{exterior product} $\omega\wedge\mu \in \Alt{i+j}(\Real^n)$ is defined, for any vectors $\bvec v_1,\ldots,\bvec v_{i+j}\in\Real^n$, by
$$
(\omega\wedge\mu)(\bvec v_1,\ldots,\bvec v_{i+j}) \coloneq \sum_{\sigma\in\Sigma_{i,j}} \sign(\sigma)\,\omega(\bvec v_{\sigma_1},\ldots,\bvec v_{\sigma_i})\,
\mu(\bvec v_{\sigma_{i+1}},\ldots,\bvec v_{\sigma_{i+j}}),
$$
where $\Sigma_{i,j}$ is the set of all permutations $\sigma$ of the $(i+j)$-tuple $(1,\ldots,i+j)$ such that $\sigma_1 < \cdots < \sigma_i$ and $\sigma_{i+1} < \cdots < \sigma_{i+j}$. The exterior product satisfies the anticommutativity law
\begin{equation}\label{eq:wedge:anticommutativity}
  \omega\wedge\mu = (-1)^{ij}\mu\wedge\omega,
\end{equation}
so that, in particular, we have
$\rd x^i \wedge \rd x^i = 0$ and $\rd x^i \wedge \rd x^j = - \rd x^j \wedge \rd x^i$.
With these definitions, for $k\in [1,n]$ a basis of the space $\Alt{k}(\Real^n)$ is $\{\rd x^{\sigma_1} \wedge \cdots \wedge \rd x^{\sigma_k}\}_\sigma$ where $\sigma$ spans all strictly increasing functions $[1,k]\to[1,n]$. Hence, any $\omega\in\Alt{k}(\Real^n)$ can be written
\begin{equation}
  \label{eq:general.alternating.form}
  \omega = \sum_{1\le \sigma_1 < \cdots < \sigma_k \le n} a_\sigma \ \rd x^{\sigma_1} \wedge \cdots \wedge \rd x^{\sigma_k},\quad a_\sigma\in\Real.
\end{equation}

\subsubsection{Hodge star operator}

The scalar product in $\Real^n$ induces a scalar product, denoted by $\langle{\cdot,\cdot}\rangle$, on $\Alt{n-k}(\Real^n)$ -- namely, the scalar product for which the aforementioned basis $\{\rd x^{\sigma_1}\wedge\cdots\wedge\rd x^{\sigma_{n-k}}\}_\sigma$ of $\Alt{n-k}(\Real^n)$ is orthonormal. The
\emph{Hodge star operator} is the unique linear mapping $\star:\Alt{k}(\Real^n)\to\Alt{n-k}(\Real^n)$ such that, for all $\omega\in\Alt{k}(\Real^n)$,
$\langle\star\omega,\mu\rangle\vol = \omega\wedge\mu $ for all $\mu\in\Alt{n-k}(\Real^n)$.
It can be checked that
$$
\star(\rd x^{\sigma_1}\wedge\cdots\wedge \rd x^{\sigma_k})
= \sign(\sigma,\tau) (\rd x^{\tau_1} \wedge \cdots \wedge \rd x^{\tau_{n-k}}),
$$
where $(\sigma,\tau) = (\sigma_1,\ldots,\sigma_k,\tau_1,\ldots,\tau_{n-k})$ is a permutation of $(1,\ldots,n)$ such that $\sigma_1 < \cdots < \sigma_k$ and $\tau_1 < \cdots < \tau_{n-k}$. From the above identity, one can infer that
\begin{equation}\label{eq:star.star}
  \star(\star \omega) = (-1)^{k(n-k)}\omega\qquad\forall\omega\in\Alt{k}(\Real^n)
\end{equation}
and, hence, that $\langle\star\omega,\star\mu\rangle = \langle\omega,\mu\rangle$, i.e., $\star$ is an isometry. Formula \eqref{eq:star.star} justifies the definition \eqref{eq:inv.star} of $\star^{-1}$. The anticommutativity \eqref{eq:wedge:anticommutativity} of $\wedge$, the definition of $\star$, and the symmetry of $\langle\cdot,\cdot\rangle$ then give
\begin{equation}\label{eq:commut.star.wedge}
  {\star^{-1}\omega}\wedge\mu=\mu\wedge\star\omega=\omega\wedge\star\mu\qquad\forall \omega,\mu\in\Alt{k}(\Real^n).
\end{equation}
\begin{example}[Hodge star operator in two and three dimensions]
  \label{rem:hodge.2.3}
  If $\omega \in \Alt{2}(\Real^3)$, i.e., $\omega = a_{12} \ \rd x^1 \wedge \rd x^2 + a_{13} \ \rd x^1 \wedge \rd x^3 + a_{23} \ \rd x^2 \wedge \rd x^3$ (see \eqref{eq:general.alternating.form}), one obtains $\star\omega \in \Alt{1}(\Real^3)$ with
  $$\star\omega = a_{12} \ \rd x^3 - a_{13} \ \rd x^2 + a_{23} \ \rd x^1.$$
  If $\omega \in \Alt{1}(\Real^2)$, i.e., $\omega = a_1\ \rd x^1 + a_2 \ \rd x^2$, then $\star\omega \in \Alt{1}(\Real^2)$ with
  $$\star \omega = a_1 \ \rd x^2 - a_2 \ \rd x^1.$$
\end{example}

\subsubsection{Vector proxies for alternating forms}

As already mentioned in Section~\ref{sec:alternating.forms}, $\Alt{0}(\Real^n) = \Real$ and $\Alt{n}(\Real^n)\cong \Real$.
Using the Riesz representation theorem to identify $(\Real^n)'$ and $\Real^n$, we can identify two more spaces of alternating forms: $\Alt{1}(\Real^n) = (\Real^n)'\cong \Real^n$ and, writing $\star\Alt{n-1}(\Real^n)=\Alt{1}(\Real^n)\cong \Real^n$ since $\star$ is bijective, $\Alt{n-1}(\Real^n)\cong\Real^n$.

Applied with $n=3$, and recalling the formula for Hodge star transformations of 2-forms in Remark~\ref{rem:hodge.2.3},
these identifications lead to considering a vector $\bvec v = (a,b,c) \in \Real^3$ as a \emph{proxy} for both the alternating linear and bilinear forms
$$
\Alt{1}(\Real^3) \ni \omega = a\ \rd x^1 + b\  \rd x^2 + c\ \rd x^3\ \text{and}\
\Alt{2}(\Real^3)\ni \mu = a\ \rd x^2 \wedge \rd x^3 - b\ \rd x^1 \wedge \rd x^3 + c\ \rd x^1 \wedge \rd x^2.
$$
On the other hand, when $n=2$, the discussion above gives two possible ways to identify $\Alt{1}(\Real^2) =\Alt{2-1}(\Real^2)$ with $\Real^2$. This leads to associating $a\ \rd x^1 + b\ \rd x^2=\omega\in\Alt{1}(\Real^2)$ either to the vector $\bvec v = (a,b) \in \Real^2$, or to its rotation by a right angle
$\rotation{-\pi/2}\bvec v = (b,-a)\in \Real^2$.

Based on the above identifications, when $n=3$, one can interpret the exterior product of two alternating multilinear forms $\omega\wedge\mu$ in terms of vector proxies $(\bvec{w}, \bvec{v})$ as follows:
\begin{itemize}
\item the vector product $\Real^3\times\Real^3 \ni (\bvec w, \bvec v) \mapsto \bvec w \times \bvec v \in \Real^3$
  when $(\omega,\mu)\in \Alt{1}(\Real^3) \times \Alt{1}(\Real^3)$;
\item the dot product $\Real^3\times\Real^3 \ni (\bvec w, \bvec v) \mapsto \bvec w \cdot \bvec v \in \Real$ when $(\omega,\mu)\in \Alt{1}(\Real^3)\times \Alt{2}(\Real^3).$
\end{itemize}
On the other hand, if $n=2$ and $\omega,\mu\in \Alt{1}(\Real^2)$, we can write $\omega\wedge\mu = (a\ \rd x^1 + b \ \rd x^2) \wedge (f\ \rd x^1 + g \ \rd x^2) = (ag-bf) \ \rd x^1 \wedge \rd x^2$. Considering the correspondences $\omega \leftrightarrow \bvec w = (a,b)$ and $\mu\leftrightarrow \bvec v = (f,g)$, we obtain
\begin{equation}
  \label{eq:exterior.product.2D}
  \omega\wedge\mu = (\bvec w\cdot \rotation{-\pi/2}\bvec v)\ \rd x^1 \wedge \rd x^2.
\end{equation}

\subsubsection{Contraction and trace}

For a given vector $\bvec{v}\in\Real^n$, the \emph{contraction} $\omega\lrcorner \bvec v \in \Alt{k-1}(\Real^n)$ of $\omega\in\Alt{k}(\Real^n)$ with $\bvec v$ is defined, for any $\bvec v_1, \ldots, \bvec v_{k-1} \in \Real^n$, by
\begin{equation}
  \label{eq:contraction}
  (\omega\lrcorner \bvec v)(\bvec v_1,\ldots,\bvec v_{k-1}) \coloneq \omega(\bvec v, \bvec v_1,\ldots,\bvec v_{k-1}).
\end{equation}
In terms of vector proxies, in the case where $n=3$, this contraction with $\bvec{v}$ corresponds to
\begin{itemize}
\item  the scalar product $\Real^3\ni \bvec w \mapsto \bvec v \cdot \bvec w \in \Real$ when $\bvec{w}\leftrightarrow \omega\in\Alt{1}(\Real^3)$;
\item  the vector product $\Real^3 \ni \bvec w \mapsto \bvec v \times \bvec w \in \Real^3$ when $\bvec{w}\leftrightarrow \omega\in\Alt{2}(\Real^3)$;
\item the multiplication of a real number $\Real \ni w \mapsto w\bvec v \in \Real^3$ when $w\leftrightarrow \omega\in\Alt{3}(\Real^3)$.
\end{itemize}

Let now $V\subset W$ be finite dimensional subspaces of $\Real^n$, and $\iota_V : V \hookrightarrow W$ be the inclusion of $V$ in $W$.
The \emph{trace} ${\tr_V}:\Alt{k}(W) \to \Alt{k}(V)$ is the pullback under $\iota_V$: For any $\bvec v_1,\ldots,\bvec v_k \in V$,
\begin{equation}
  \label{eq:trace.alt}
  \tr_V\omega(\bvec v_1, \ldots, \bvec v_k) \coloneq \omega(\iota_V\bvec v_1,\ldots,\iota_V\bvec v_k).
\end{equation}
The trace respects the exterior product, i.e., ${\tr_V}(\omega\wedge\mu) = {\tr_V}\omega \wedge {\tr_V}\mu$.

It is easy to see that, through the vector proxy of $\Alt{1}$ spaces, ${\tr_V}: \Alt{1}(W) \to \Alt{1}(V)$ is the orthogonal projection $\pi_V: W \to V$ of a vector $\bvec w\in W$ onto $V$.

Let us fix an integer $m\in[1,n]$ and suppose that $\dim(W) = m$ and $\dim(V) = m-1$, and that both spaces are oriented; let $\bvec{n}_V$ be the unit normal to $V$ such that, given a positively oriented basis $(\bvec{e}_1,\ldots,\bvec{e}_{m-1})$ of $V$, the family $(\bvec{n}_V,\bvec{e}_1,\ldots,\bvec{e}_{m-1})$ forms a positively oriented basis of $W$. Then, an identification of the trace ${\tr_V}:\Alt{m-1}(W)\to\Alt{m-1}(V)$ through vector proxies  is the scalar product with the vector $\bvec{n}_V$, that is, $W \ni \bvec w \mapsto \bvec w \cdot \bvec n_V \in \Real$.

\subsection{Exterior calculus in $\Real^n$}\label{sec:exterior.calculus.Rn}

\subsubsection{Differential forms}
Let $M$ be an $n$-dimensional flat manifold.
When the coefficients in~\eqref{eq:general.alternating.form} are functions
$a_\sigma:{M} \to \Real$, the map $\omega:{M} \to \Alt{k}(\Real^n)$ is referred to as a \emph{differential form}, or simply a $k$-form.
Consistently with the notation adopted in Section~\ref{sec:setting:differential.forms}, the
space of $k$-forms over ${M}$ without any specific smoothness requirement on the coefficients $a_\sigma$ is denoted by $\Lambda^k({M})$. If $\omega\in\Lambda^k({M})$, the value of $\omega$ at $\bvec{x}\in{M}$ is denoted by $\omega_{\bvec{x}}\in\Alt{k}(\Real^n)$.

If the coefficients $a_\sigma$ in~\eqref{eq:general.alternating.form} are polynomial functions, $\omega$ is said to be a \emph{polynomial differential form}.
Specifically, for an integer $r\ge 0$, the space of polynomial $k$-forms of degree $\le r$ is defined as
\[ 
\Poly{r}{}\Lambda^k({M}) \coloneq \left\{  \sum_{1\le\sigma_1 < \cdots < \sigma_k\le n} p_\sigma \ \rd x^{\sigma_1}\wedge\cdots\wedge \rd x^{\sigma_k} \ : \ p_\sigma \in
\Poly{r}{}({M})\right\},
\] 
where $\Poly{r}{}({M})$ is the space of scalar polynomials of degree $\le r$ over ${M}$.
All the arguments concerning vector proxies presented in Section~\ref{sec:exterior.algebra} for alternating $k$-linear forms
can be immediately extended to the case of $k$-forms. Hence, when $n\in\{2,3\}$, their corresponding vector proxies are scalar fields over ${M}$ when $k\in\{0,n\}$, and vector fields over ${M}$ when $k\in\{1,n-1\}$.

\subsubsection{Exterior derivative and de Rham complexes}

Provided that the coefficients $a_\sigma$ in~\eqref{eq:general.alternating.form}
are smooth enough, the \emph{exterior derivative} of a $k$-form $\omega\in\Lambda^k({M})$ is the linear unbounded
operator $\rd : \Lambda^k({M}) \to \Lambda^{k+1}({M})$ such that, in terms of standard coordinates on $\Real^n$,
\[ 
\rd \omega = \sum_{1\le\sigma_1 < \cdots < \sigma_k\le n} \sum_{i=1}^n \frac{\partial a_\sigma}{\partial x_i} \ \rd x^i \wedge \rd x^{\sigma_1}
\wedge \cdots \wedge \rd x^{\sigma_k}.
\] 

The interpretation of the exterior derivative in terms of vector calculus operators, through vector proxies of alternating forms and when $M$ is a domain $\Omega$ of $\Real^3$, is given in  \eqref{eq:d.proxy.n=3}. We have used in this diagram the spaces defined in the introduction of the paper.

\begin{equation}\label{eq:d.proxy.n=3}
  \begin{tikzcd}
    \text{Differential forms:}
    & {H\Lambda^0(\Omega)} \arrow{r}{\rd}\arrow[<->]{d}
    & {H\Lambda^1(\Omega)}\arrow{r}{\rd}\arrow[<->]{d}
    & {H\Lambda^2(\Omega)}\arrow{r}{\rd}\arrow[<->]{d}
    & {H\Lambda^3(\Omega)}\arrow[<->]{d} \\
    \text{Vector proxies:}
    & {H^1(\Omega)}\arrow{r}{\GRAD}
    & {\Hcurl{\Omega}} \arrow{r}{\CURL}
    & {\Hdiv{\Omega}}\arrow{r}{\DIV}
    & {L^2(\Omega)}.
  \end{tikzcd}
\end{equation}

In the case $n=2$, as we have two possible vector proxies for $\Alt{1}(\Real^2)$. These interpretations are illustrated in \eqref{eq:d.proxy.n=2.a} when $\omega=a\ \rd x^1+b\ \rd x^2\in\Alt{1}(\Real^2)$ is identified with $\bvec{v}=(a,b)$, and in \eqref{eq:d.proxy.n=2.b} when $\omega\in\Alt{2-1}(\Real^2)$ is identified with $\rotation{-\nicefrac{\pi}{2}}\bvec{v}$ (with $\ROT=\DIV\rotation{-\nicefrac{\pi}{2}}$ and $\VROT=\rotation{-\nicefrac{\pi}{2}}\GRAD$, respectively, denoting the scalar and vector curls, and $\Hrot{\Omega}$ the space of square-integrable vector-valued functions whose $\ROT$ is also square-integrable).

\begin{equation}\label{eq:d.proxy.n=2.a}
  \begin{tikzcd}
    \text{Differential forms:} & {H\Lambda^0(\Omega)} \arrow{r}{\rd}\arrow[<->]{d} & {H\Lambda^1(\Omega)}\arrow{r}{\rd}\arrow[<->]{d}& {H\Lambda^2(\Omega)}\arrow[<->]{d} \\
    \text{Vector proxies:}& {H^1(\Omega)}\arrow{r}{\GRAD} & {\Hrot{\Omega}}\arrow{r}{\ROT} & {L^2(\Omega)}.
  \end{tikzcd}
\end{equation}

\begin{equation}\label{eq:d.proxy.n=2.b}
  \begin{tikzcd}
    \text{Differential forms:} & H\Lambda^0(\Omega) \arrow{r}{\rd}\arrow[<->]{d} & H\Lambda^1({\Omega})\arrow{r}{\rd}\arrow[<->]{d}& H\Lambda^2({\Omega})\arrow[<->]{d} \\
    \text{Vector proxies:}& {H^1(\Omega)}\arrow{r}{\VROT} & {\Hdiv{\Omega}}\arrow{r}{\DIV} & {L^2(\Omega)}.
  \end{tikzcd}
\end{equation}

Notice, finally, that the exterior derivative satisfies the \emph{complex property} $\rd \circ \rd = 0$. This property translates, through vector proxies, into the well-known identities $\CURL\GRAD=\mathbf{0}$ and $\DIV\CURL=0$ for $n=3$, and $\ROT\GRAD=0$, $\DIV\VROT=0$ when $n=2$.

\subsubsection{Koszul differential}

Given $\bvec{x}_M\in\Real^n$, the \emph{Koszul differential} $\kappa_M : \Lambda^k({M}) \to \Lambda^{k-1}({M})$
is defined pointwise over ${M}$ as follows: For all $\bvec{x}\in{M}$, recalling the definition \eqref{eq:contraction} of the contraction $\lrcorner$,
\[
(\kappa_M \omega)_{\bvec{x}}
\coloneq
\omega_{\bvec{x}}\lrcorner(\bvec{x}-\bvec{x}_M).
\]
Its interpretation in terms of vector fields proxy is then analogous to that of a contraction of an alternating multilinear form with a vector,
except that the contraction is made pointwise with the vector field $\Real^n \ni \bvec x \mapsto \bvec x - \bvec x_M \in \Real^n$.
The terminology ``differential'' is legitimate, as $\kappa_M$ satisfies the complex
property $\kappa_M \circ \kappa_M = 0$ (since any alternating form applied to the same vector twice vanishes).

\subsubsection{Trace}

If $P\subset Q$ are (relatively) open sets in affine subspaces $V\subset W$ of $\Real^n$, the trace operator $\tr_P : C^0\Lambda^k(\overline{Q}) \rightarrow C^0\Lambda^k(\overline{P})$ on differential forms is defined pointwise, using the trace operator \eqref{eq:trace.alt} on alternating forms: For all $\omega \in C^0\Lambda^k(\overline{Q})$,
$$
(\tr_P \omega)_{\bvec{x}} \coloneq \tr_{V}\omega_{\bvec{x}}
\qquad\forall \bvec{x}\in P.
$$
Note that, in the case $P=Q$, the trace is simply the identity operator (and can be defined without any continuity assumption): $\tr_P \omega=\omega$ for all $\omega\in \Lambda^k(P)$.

Applying the same arguments as in Section~\ref{sec:exterior.algebra} pointwise over $P$, the trace operator in terms of vector fields proxy gives
\begin{itemize}
\item the restriction of functions, when $k=0$;
\item the orthogonal projection onto $V$ (that is, $\tr_P\omega\leftrightarrow \pi_{V} \bvec{w}$ if $\omega\leftrightarrow \bvec{w}$),
  when $k=1$;
\item the normal component on $P$ along the direction $\bvec{n}$ (that is, $\tr_P\omega\leftrightarrow \bvec{w}\cdot\bvec{n}$ if $\omega\leftrightarrow \bvec{w}$), with $\bvec{n}$ unit normal vector field preserving the orientations of $V$ and $W$,
  when $k=\dim(P) = \dim(Q)-1$.
\end{itemize}%
\begin{example}[Interpretation of the Stokes formula for $\ell=1$ and $n=3$]
  We rewrite here, for the reader's convenience, the integration by parts formula~\eqref{eq:ipp} for $\ell=1$
  and $n=3$:
  \begin{equation}\label{eq:stokes.appendix}
    \int_{M} \rd\omega\wedge\mu
    = \int_{M} \omega \wedge \rd\mu
    + \int_{\partial {M}} \tr_{\partial {M}} \omega \wedge \tr_{\partial  {M}}\mu
    \qquad\forall(\omega,\mu) \in \Lambda^1({M})\times\Lambda^{1}({M}).
  \end{equation}
  Given the previous interpretations of the exterior derivative and product in terms of vector proxies, if $\omega\leftrightarrow\bvec{w}$ and $\mu\leftrightarrow\bvec{v}$, then $\rd\omega\wedge\mu\leftrightarrow  \CURL\bvec{w}\cdot\bvec{v}$ and $\omega\wedge\rd\mu\leftrightarrow\bvec{w}\cdot\CURL\bvec{v}$. This leads to the following integration by parts formula for the curl:
  \begin{equation}\label{eq:stokes.curl.appendix}
    \int_{M} \CURL \bvec w \cdot \bvec v = \int_{M} \bvec w \cdot \CURL\bvec v + \int_{\partial{M}} (\bvec n \times(\bvec w \times \bvec n))
    \cdot (\bvec v \times \bvec n),
  \end{equation}
  where $\bvec n$ is the outer unit normal vector field over $\partial{M}$.
  For any fixed $\bvec{x}\in\partial{M}$, we have $\bvec n(\bvec x)
  \times(\bvec w(\bvec{x}) \times \bvec n(\bvec x)) = \pi_{T_{\bvec x}\partial{M}}\bvec w(\bvec x)$ (here, $T_{\bvec{x}}\partial{M}$ is the tangent space of $\partial{M}$ at $\bvec{x}$),
  whereas
  $\bvec v(\bvec x) \times \bvec n(\bvec x) = \rotation{-\pi/2}(\pi_{T_{\bvec x}\partial{M}}\bvec v(\bvec x))$,
  where the rotation is considered with respect to the orientation of the tangent plane given by $\bvec n(\bvec x)$.
  The boundary terms of \eqref{eq:stokes.appendix} and \eqref{eq:stokes.curl.appendix} therefore coincide, through the vector proxy for the exterior product of 1-forms in dimension 2 (see~\eqref{eq:exterior.product.2D}).
\end{example}


\begin{funding}
  Funded by the European Union (ERC Synergy, NEMESIS, project number 101115663).
  Views and opinions expressed are however those of the authors only and do not necessarily reflect those of the European Union or the European Research Council Executive Agency. Neither the European Union nor the granting authority can be held responsible for them.

  Francesco Bonaldi additionally acknowledges the partial support of \emph{Agence Nationale de la Recherche} and Université de Montpellier through the grant ANR-16-IDEX-0006 ``RHAMNUS''.
  Kaibo Hu also acknowledges the partial support of a \emph{Royal Society University Research Fellowship} through the grant URF$\backslash${\rm R}1$\backslash$221398.
\end{funding}


\bibliographystyle{plain}
\bibliography{ddr-pec}

\end{document}